\definecolor{darkgreen}{RGB}{47,139,79}
\definecolor{darkblue}{RGB}{36,24,130}
\let\oldtocsection=\tocsection
\let\oldtocsubsection=\tocsubsection
\renewcommand{\tocsection}[2]{\hspace{0em}\oldtocsection{#1}{#2}}
\renewcommand{\tocsubsection}[2]{\hspace{1em}\oldtocsubsection{#1}{#2}}
\DeclareRobustCommand{\SkipTocEntry}[5]{}
\newcommand{\nocontentsline}[3]{}
\let\origcontentsline\addcontentsline
\newcommand\stoptoc{\let\addcontentsline\nocontentsline}
\newcommand\resumetoc{\let\addcontentsline\origcontentsline}
\title{Homological stability  for symplectic groups via algebraic arc complexes}
\author{Ismael Sierra}
\author{Nathalie Wahl}
\date{\today}
\newcommand{\Rad} {\operatorname{Rad}}
\newcommand{\im} {\operatorname{Im}}
\newcommand{\Aut} {\operatorname{Aut}}
\newcommand{\Diff} {\operatorname{Diff}}
\newcommand{\Aa}{\mathcal{A}}
\newcommand{\Dr}{csr}
\newcommand{\B}{\mathcal{B}}
\newcommand{\D}{\mathcal{D}}
\newcommand{\Hd}{\operatorname{F}}
\newcommand{\bbN} {\ensuremath{\mathbb{N}}}
\newcommand{\F}{\mathcal{F}}
\newcommand{\Fd}{\mathcal{F}_\del}
\newcommand{\FdX}{\mathcal{F}_{\del,X}}
\newcommand{\M}{\mathcal{M}}
\newcommand{\tM}{\mathbf{r}}
\newcommand{\del}{\partial}
\newcommand{\hash}{\mathbin{\text{\normalfont \texttt{\#}}}}
\DeclareMathAlphabet\mathbfcal{OMS}{cmsy}{b}{n}
\DeclareMathOperator\id{id}
\DeclareMathOperator\rk{rk}
\DeclareMathOperator\link{Link}
\newcommand{\Ga}{\Gamma}
\DeclareMathOperator\HH{\mathcal{H}}
\newcommand{\GL}{\operatorname{GL}}
\newcommand{\Sp}{\operatorname{Sp}}
\newcommand{\Uu}{\mathbb{U}}
\newcommand{\X}{\mathcal{U}}
\newcommand{\Z}{\mathbb{Z}}
\newcommand{\St}{\operatorname{Star}}
\newcommand{\cS}{\mathcal{S}}
\newcommand{\DD}{\mathbb{D}}
\theoremstyle{definition}
\newtheorem{thm}{Theorem}[section]
\newtheorem{prop}[thm]{Proposition}
\newtheorem*{prop*}{Proposition}
\newtheorem{lemma}[thm]{Lemma}
\newtheorem{lem}[thm]{Lemma}
\newtheorem{cor}[thm]{Corollary}
\newtheorem*{claim*}{Claim}
\newtheorem{rem}[thm]{Remark}
\newtheorem*{rem*}{Remark}
\newtheorem{defn}[thm]{Definition}
\newtheorem{Def}[thm]{Definition}
\newtheorem{ex}[thm]{Example}
\numberwithin{equation}{section}
\newtheorem{Th}{Theorem}
\newcommand{\al}{\alpha}
\newcommand{\la}{\lambda}
\newcommand{\s}{\sigma}
\newcommand{\x}{\times}
\newcommand{\minus}{\backslash}
\newcommand{\inc}{\hookrightarrow}
\newcommand{\rar}{\longrightarrow}
\newcommand{\note}[1]
{{\bf [N: #1]}}
\newcommand{\Inote}[1]
{{\bf [I: #1]}}
\begin{document}

\begin{abstract}
    We use algebraic arc complexes to prove a homological stability result for symplectic groups with  slope $\frac{2}{3}$ for rings with finite unitary stable rank. Symplectic groups are here interpreted as the automorphism groups of {\em formed spaces with boundary}, which are algebraic analogues of surfaces with boundary, that we also study in the present paper. Our stabilization map is a rank one stabilization in the category of formed spaces with boundary, going through both odd and even symplectic groups. 
\end{abstract}

\maketitle

\section{Introduction}

\stoptoc

Let $R$ be a commutative ring. The symplectic group $\Sp_{2n}(R)$ is the automorphism group of the hyperbolic space $\HH^{\oplus n}$, where $\HH=(R^2,\la_{\HH})$ is $R^2$ equipped with the non-degenerate alternating form $\la_{\HH}=\begin{pmatrix}
    \ 0 & 1 \\ -1 & 0
\end{pmatrix}$. 
A pair $(M,\la)$, with $M$ an $R$-module and $\la$ an alternating form, is called a formed space, and one has traditionally studied the stability properties of symplectic groups as coming from stabilizing by direct sum with copies of $\HH$ in the category of formed spaces. 

In the present paper, we improve the best known stability range for the homology of symplectic groups in the case of rings $R$ with finite  unitary stable rank $usr(R)$ (see Definition \ref{def:sr}), by working instead in the new category $\Fd$ of {\em formed spaces with boundary} $(M,\la,\del)$. Here $\del:M\to R$ is the additional data of a linear map, which we think of as a boundary. We then  replace the rank two stabilization map $\oplus \HH$ by a rank one stabilization $\# X$ with the object $X=(R,0,\id)$ of $\Fd$, where $\#$ is a monoidal structure on $\Fd$, with the property that both $X^{\# 2g}$ and $X^{\# 2g+1}$ are closely related to $\HH^g$. More specifically, there is an identification $\Sp_{2n}(R)\cong \Aut(X^{\# 2n+1})$,   
while the intermediate \textit{odd symplectic group} $\Sp_{2n-1}(R):=\Aut(X^{\# 2n})$ identifies with a parabolic subgroup in $\Sp_{2n}(R)$. These particular odd symplectic groups have appeared elsewhere before e.g.; in \cite{AST_1985__S131__117_0} or in \cite{SarSch,Sch22} in the context of homological stability, but maybe defined in a more ad-hoc manner.

\medskip

Our main result is the following:

\begin{Th}\label{thm:A}
Let $R$ be a commutative ring with $usr(R)<\infty$ and set $c=0$ if $R$ is a PID, and $c=2 usr(R)+2$ otherwise. Then the map 
    \[H_i(\Sp_n(R);\Z)\to H_{i}(\Sp_{n+1}(R);\Z)\]
    is an epimorphism for $i\le \frac{n-c}{3}$, and a monomorphism for $i\le \frac{n-c-3}{3}$ ($n$ odd) and for all $i$ ($n$ is even).  
    In particular, restricting to even symplectic groups we get that \[H_i(\Sp_{2g}(R);\Z)\to H_{i}(\Sp_{2g+2}(R);\Z)\]
    is an epimorphism for $i \le \frac{2g-c}{3}$ and an isomorphism for $i\le \frac{2g-c-2}{3}$. 
\end{Th}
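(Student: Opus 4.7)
The strategy is to apply the standard homological stability machinery for monoidal groupoids to $(\Fd, \#)$ with stabilizing object $X = (R, 0, \id)$, viewing the symplectic groups as automorphism groups of the iterated monoidal powers $X^{\#k}$ via the identifications $\Aut(X^{\#(2n+1)}) = \Sp_{2n}(R)$ and $\Aut(X^{\#2n}) = \Sp_{2n-1}(R)$. The stabilization map of the theorem then becomes $-\#X$, and the main technical input required will be high connectivity of an associated semi-simplicial space of destabilizations, which is precisely the \emph{algebraic arc complex} of the title.

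A first structural observation is that the inclusions $\Sp_{2n-1}(R) \hookrightarrow \Sp_{2n}(R)$ of the intermediate odd symplectic groups as parabolic subgroups are split injections on integral homology in every degree; this yields the ``all $i$'' injectivity statement when $n$ is even, and, combined with stability for the remaining maps, will be what produces the improved slope $2/3$ rather than the naive slope $1/2$ that a generic rank-one stabilization argument would give. The technical heart of the proof is then to show that the algebraic arc complex associated to $X^{\#n}$ is highly connected (roughly $\lfloor (n-c)/2 \rfloor$-connected) under the hypothesis $usr(R) < \infty$. I would prove this by induction on $n$, modelled on the bad-simplex / link-replacement arguments of Hatcher--Wahl for topological arc complexes, with elementary-matrix replacements playing the role of isotopies; the unitary stable rank hypothesis enters precisely to guarantee that the required replacement moves exist. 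The improved PID constant $c = 0$ should be obtained by a separate, cleaner argument exploiting the extra module-theoretic structure of principal ideal domains (unique factorization of unimodular vectors, etc.) to avoid the cumulative error inherent in iterating elementary moves.

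With the connectivity bound in hand, one feeds it into the standard spectral sequence for monoidal stability, and combines the resulting surjectivity/injectivity ranges with the parabolic injectivity of every other stabilization to extract slope $2/3$ and the precise asymmetry between odd and even $n$ stated in the theorem. The main obstacle is the arc-complex connectivity estimate itself: the interplay of the non-degenerate alternating form, the boundary map $\del$, and the rank-one nature of the stabilization makes the combinatorics genuinely more subtle than in the classical unitary-group / direct-sum setting, and the finiteness of $usr(R)$ must be used delicately to drive the induction while respecting all these additional structures. The subsequent slope-improvement step, though more formal, also requires careful spectral-sequence bookkeeping to pin down the sharp constants appearing in the statement.
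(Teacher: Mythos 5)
Your overall strategy matches the paper's: set $\Fd$ with $\#$ and stabilizing object $X$, feed the connectivity of a destabilization complex into the Randal-Williams--Wahl/Krannich machine, prove connectivity of an algebraic arc complex by a bad-simplex argument using $usr(R)<\infty$, and treat PIDs separately to sharpen the constant. However, there are two genuine gaps in your account of where the slope comes from and how the pieces fit together.

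First, your connectivity estimate and its interpretation are wrong. You assert the arc complex is ``roughly $\lfloor(n-c)/2\rfloor$-connected'' and that the parabolic split injections $\Sp_{2n-1}(R)\hookrightarrow\Sp_{2n}(R)$ are ``what produces the improved slope $2/3$.'' Neither is correct. The disordered arc complex $\D(M,\la,\del)$ is proved to be $\frac{g_X-2usr(R)-6}{3}$-connected (or $\frac{g_X-5}{3}$ for PIDs), i.e.\ slope $1/3$ in the monoidal rank $n$, and it is precisely this slope-$1/3$ connectivity fed into the machine that yields $i\le\frac{n-c}{3}$, which rewrites as slope $2/3$ in the hyperbolic genus $g=n/2$. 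The $1/3$ factor is created by the bad-simplex induction: one begins with a slope-$1$ connectivity for the larger non-separating arc complex $\B$ (via posets of unimodular sequences), but ``cutting'' a bad $p$-simplex can only be guaranteed to drop the arc genus by at most $2p+1$ (Lemma~\ref{lem:cut form properties}), forcing the $3$ in the denominator. The parabolic splitting plays no role in the slope; it contributes only the unconditional ``monomorphism for all $i$'' when $n$ is even, exactly as stated, and does not feed back into the epimorphism range at all. A connectivity of $n/2$ for $\D$ would give slope $1/2$ in $n$, i.e.\ slope $1$ in $g$, which is strictly stronger than the statement of the theorem; so your estimate is also internally inconsistent with your claimed conclusion.

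Second, you omit a necessary ingredient between the arc-complex connectivity and the stability conclusion: one must identify $\D(A\#X^{\#n})$ with the destabilization complex $W_n(A,X)$ of \cite{RWW,krannich} (at least in a range of skeleta). This identification is not formal; it requires a cancellation result for $\#X$ in $\Fd$ (Theorems~\ref{thm: general cancellation} and~\ref{thm: cancellation}), which is itself nontrivial (the general one is proved from connectivity of $\D$, and the PID case requires a classification of formed spaces with boundary). Without cancellation, transitivity of the automorphism group on simplices fails and the arc complex need not coincide with $W_n(A,X)$. Your phrase ``elementary-matrix replacements playing the role of isotopies'' also undersells what actually drives the induction: the key is arc-genus bookkeeping under cutting, which rests on Proposition~\ref{prop:formula for gX} (relating arc and hyperbolic genera) and Proposition~\ref{prop: finiteness of usr*} (absorbing a unimodular functional into a hyperbolic summand), rather than on elementary replacements alone.
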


The above slope $2/3$ stability result improves the earlier results of Charney \cite[Cor 4.5]{Cha87} and Mirzaii-van der Kallen \cite[Thm 8.2]{MirzaiivdK}, who gave a stability result for even symplectic group with a slope $1/2$ for respectively Dedekind domains and rings with finite unitary stable rank. The range given here is however not optimal for finite fields, where stability is known to hold with the better slope $1$ instead by Sprehn and the second author \cite{SprWah} (for fields other than $\mathbb{F}_2$), and for local rings with infinite residue field, where a slope $2$ result was recently proved by Schlichting \cite{Sch22}. 
Rationally in the case $R=\Z$, the stability slope is known to be exactly $1$ by \cite{tshishiku2019borels}.

\subsection*{Finite degree coefficients}
As is often the case for homological stability results, stability also holds for finite degree coefficient systems. A sequence of compatible $\Sp_n(R)$--representations $M_n$ is here called a finite degree coefficient system if the maps $M_n\to M_{n+1}$ are injective with trivial iterated cokernels, and a certain braid condition holds, see Definitions~\ref{def:coeff} and~\ref{def:findeg}.

\begin{Th}\label{thm:B} Let $R$ be a commutative ring with $usr(R)<\infty$ and set $c=0$ if $R$ is a PID, and $c=2usr(R)+2$ otherwise. Let $M_n$ be a coefficient system of degree $r$. Then the map 
    $$H_i(\Sp_n(R);M_n)\to H_{i}(\Sp_{n+1}(R);M_{n+1})$$ 
     is an epimorphism for $i\le \frac{n-c-3r+1}{3}$ and an isomorphism for $i\le \frac{n-c-3r-2}{3}$. 

\end{Th}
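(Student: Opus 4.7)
The plan is to deduce Theorem~\ref{thm:B} from the same high-connectivity estimate for the algebraic arc complex that drives Theorem~\ref{thm:A}, together with the Randal-Williams--Wahl style spectral sequence machinery for twisted homological stability in braided monoidal categories. Since the stabilization is by $\#X$ in the braided monoidal category $\Fd$, and since a finite-degree coefficient system in the sense of Definitions~\ref{def:coeff} and~\ref{def:findeg} is by design the input that this framework demands, no new connectivity input is required beyond the one already proved for Theorem~\ref{thm:A}.

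First I would run the double-complex spectral sequence obtained by taking equivariant homology with coefficients in $M_n$ of the augmented semi-simplicial arc complex used for Theorem~\ref{thm:A}. Its $E^1$-page has the form
\[
E^1_{p,q} \cong H_q\bigl(\Aut(X^{\#\, n-p-1});\, M_n|_{p+1}\bigr),
\]
and it vanishes in a range controlled by the slope $2/3$ connectivity of the arc complex. The crucial identification, forced by the braid condition in Definition~\ref{def:findeg} together with the braided monoidal structure on $\Fd$, is that $M_n|_{p+1}$ is canonically isomorphic to $M_{n-p-1}$ as a representation of $\Aut(X^{\#\, n-p-1})$, so that each column of $E^1$ amounts to a smaller stabilization problem for the same coefficient system.

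Next I would induct on the degree $r$ of $M_\bullet$. Writing $\Sigma M_\bullet$ for the cokernel coefficient system, which has degree $r-1$, the short exact sequence
\[
0 \to M_\bullet \to M_{\bullet+1} \to \Sigma M_\bullet \to 0
\]
of coefficient systems yields a long exact sequence in group homology. Combining this with the spectral sequence above reduces the stability range for $M_\bullet$ at degree $r$ to the stability range for $\Sigma M_\bullet$ at degree $r-1$. Each such reduction costs $3$ in the homological range (matching the inverse of the slope $2/3$ of the connectivity estimate), which accumulates after $r$ steps to the claimed loss of $3r$. The base case $r=-1$ (the zero system) is trivial, and the case $r=0$ essentially recovers Theorem~\ref{thm:A}, up to the small shift accounting for the $+1$ in the epi range.

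The main obstacle is not geometric content, but bookkeeping: ensuring that the shifts in the spectral sequence, the alternation of $\#X$ between odd and even symplectic groups, and the inductive decrements in $r$ all line up to give exactly the integer ranges in the statement, rather than a weaker version with strictly larger constants. Once the abstract framework has been checked for $\Fd$ and $\#X$ (which is needed for Theorem~\ref{thm:A} in any case), the proof of Theorem~\ref{thm:B} is a direct application of that machinery, using the same connectivity result.
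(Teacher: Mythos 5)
Your high-level strategy is the one the paper actually uses: after the braided monoidal structure on $\FdX$, the module structure on $\Fd^c(A,X)$, and the connectivity of the destabilization complex $W_n(A,X)$ are established, Theorem~\ref{thm:B} is obtained by feeding finite-degree coefficient systems into the general twisted stability machine. In the paper this is literally one line — Theorem~\ref{thm:stabtwist} is proved by citing \cite[Thm~C]{krannich}, and Theorem~\ref{thm:B} follows by taking $A=0$ and using Proposition~\ref{prop: automorphisms X^n} to identify $\Aut_{\Fd}(X^{\#n+1})\cong\Sp_n(R)$. So you have found the right door; the issue is in the way you try to describe what is behind it.

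The concrete gap is the claim that $M_n\big|_{p+1}$ ``is canonically isomorphic to $M_{n-p-1}$ as a representation of $\Aut(X^{\#\,n-p-1})$.'' This is false, even with the braid condition of Definition~\ref{def:coeff}. A coefficient system gives an equivariant \emph{map} $M_{n-p-1}\to M_n$ (indeed iterated structure maps $s_{n-p-1},\dots,s_{n-1}$), and the degree $r$ hypothesis of Definition~\ref{def:findeg} is precisely what controls how far this is from an isomorphism: it says $\ker\M$ eventually vanishes while $\operatorname{coker}\M$ has degree $r-1$, a shift that does not vanish unless $r\le 0$. If the restriction were actually $M_{n-p-1}$ there would be nothing to induct on and every coefficient system would behave like the constant one. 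What the machine really does (cf.\ \cite[Sec.~4--5]{RWW}, \cite[Sec.~4]{krannich}) is run the spectral sequence with the restricted coefficient $M_n$ throughout, compare it to the naive guess via the structure maps, and feed the cokernel system (of one degree lower) into an induction on $r$; your ``inductive decrement in $r$ costs $3$ in the range'' instinct is right, but it attaches to the cokernel system, not to a nonexistent identification on the $E^1$-page. Also note a small notational slip: in the paper $\Sigma\M$ is the shifted system $(\Sigma M)_n=M_{n+1}$, and the cokernel of $\M\to\Sigma\M$ is $\operatorname{coker}\M$; you conflate the two when you call the cokernel $\Sigma M_\bullet$.

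Since the paper itself only cites the machine, your error is best understood as a shortcut that would not survive if you tried to write out the spectral-sequence argument from scratch; but if the intended proof is really ``apply \cite[Thm~C]{krannich} to $B\Fd^c(0,X)$ over $B\FdX$ with the $W_n$-connectivity of Corollary~\ref{cor: connectivity W},'' then you are in agreement with the paper and only need to delete the false identification and the misleading sketch around it.
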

Stability results for even symplectic groups with finite degree coefficient systems appear in \cite[Thm 5.15-16]{RWW} and \cite[Thm 3.25]{Fri17}, with a stability slope $1/2$ (corresponding to $1/4$ in terms of the rank $n$). One should though note that, while examples tend to fit in both our framework and the framework of \cite{RWW,Fri17}, the definition of finite degree coefficients in those papers actually differs from ours. See Remark~\ref{rem:twisted} for more details.  

\subsection*{Abelian coefficients}
Theorem~\ref{thm:stabab} states that stability also holds with {\em abelian coefficients} $M$, that is those systems of coefficients coming from an action of $H_1(\Sp_n(R);\Z)\cong H_1(\Sp_\infty(R);\Z)$ on a fixed module $M$, 
with essentially the same bounds as in Theorem~\ref{thm:A}, improving again on earlier results of  \cite{RWW,Fri17}. However this is only relevant when $H_1(\Sp_n(R);\Z)$ is non-zero. In particular it is not relevant for $R=\Z$ by \cite[Lem A1]{BCRR}.

\subsection*{Geometric interpretation}

When $R=\Z$,  
the symplectic group $\Sp_{2g}(\Z)$ is also the automorphism group of the middle homology of a surface of genus $g$ that preserves the intersection form. In this case, the sequence of odd and even symplectic groups naturally fits in the following diagram, where $B_n$ denotes the braid group on $n$ elements and $\Ga_{g,b}=\pi_0\Diff(S_{g,b})$  the mapping class of a surface of genus $g$ with $b$ boundary components: 
\begin{align*}
\xymatrix{B_1 \ar[r]\ar[d]&B_2 \ar[r]\ar[d]& B_3 \ar[r]\ar[d] &\dots \ar[r] & B_{2g+1} \ar[r]\ar[d]& B_{2g+2} \ar[r]\ar[d]& \dots \\
\Ga_{0,1} \ar[r]\ar[d]& \Ga_{0,2} \ar[r]\ar[d]&\Ga_{1,1} \ar[r]\ar[d]&\dots \ar[r]& \Ga_{g,1} \ar[r]\ar[d]& \Ga_{g,2} \ar[r]\ar[d]& \dots \\
  \Sp_{0}(\Z)\ar[r] &\Sp_{1}(\Z)\ar[r]  &\Sp_{2}(\Z)\ar[r] & \dots \ar[r]& \Sp_{2g}(\Z)\ar[r] &\Sp_{2g+1}(\Z)\ar[r] & \dots}
\end{align*}
Here the map $B_{2g+i}\to \Ga_{g,i}$ identifies the braid group with the hyperelliptic mapping class group, the subgroup of $\Ga_{g,i}$ that preserves the hyperelliptic involution, while the map $\Ga_{g,i}\to \Sp_{2g+i-1}(\Z)$ comes from the action of diffeomorphisms on the middle homology of the surface, see Section~\ref{sec:surfaces}.

The top two sequences of groups were used in \cite{HVW} to give a short proof of the best known isomorphism range for the homology of the mapping class group of surfaces, where the ranges obtained precisely match those of Theorem~\ref{thm:A}. The proof in \cite{HVW} uses stabilization with a disc in a certain category of {\em bidecorated surfaces}, and the present paper can be seen as an algebraic version of this proof. As we will see in Section~\ref{sec:surfaces}, there is a monoidal functor from the category of bidecorated surfaces to that of formed spaces with boundary, and our stabilizing object $X=(R,0,\id)$ is the image of the bidecorated disc under that functor. 

Note that the subgroup of $\Sp_n(\Z)$ that is the image of the braid group under the above maps has since be studied in \cite{MPPRW}, where they show that it also stabilizes.

\subsection*{Orthogonal and unitary groups}
   It is natural to ask whether Theorems~\ref{thm:A} and~\ref{thm:B} also hold for orthogonal or unitary groups, that are often treated simultaneously to symplectic groups. Indeed, these groups also identify with the automorphism group of a version of $\HH^{\oplus n}$, in the context of formed spaces with a different flavour of {\em form parameters}, with alternating forms replaced by symmetric or Hermitian forms, see e.g.~\cite{SW0}. 
Our proof here however does not directly adapt to these other groups. 
In fact, a naive adaptation of the framework presented here to symmetric or Hermitian forms, stabilizing with the analogue of $X$ in such contexts, relates instead to a different family of groups, unrelated to stabilizing with hyperbolic summands. (See Remark \ref{other form parameters} for more details.)

\subsection*{Algebraic arc complexes and the proof of Theorems~\ref{thm:A} and~\ref{thm:B}}

To prove Theorems~\ref{thm:A} and~\ref{thm:B}, we use the general stabiltity machine of \cite{RWW,krannich}: we show that the stabilization maps considered come from the action of an $E_2$-module (the braided monoidal subcategory of $\Fd$ generated by $X$) on an $E_1$-algebra (the monoidal category $\Fd$). In fact the results hold more generally for the stabilization maps 
$$\Aut_{\Fd}(A\# X^{\# n})\rar \Aut_{\Fd}(A\# X^{\# n+1})$$
for $A=(M,\la,\del)$ any formed space with boundary. (See Theorems~\ref{thm:stabconst} and \ref{thm:stabtwist}.)

Stability follows if one can show that a certain {\em destabilization complex} is highly connected. Guided by the case of mapping class groups of surfaces, we identify this destabilization complex with an algebraic version of the disordered arc complex of \cite{HVW}. An {\em arc} in a formed space with boundary $(M,\la,\del)$ is defined here as an element $a\in M$ such that $\del(a)=1$. Building on \cite[Sec 6]{S22a}, we then define what it means for an arc to be {\em non-separating} and {\em disordered} in this algebraic context, see Definitions~\ref{def:arc} and \ref{def:disordered}. 

As already noted in \cite{S22a}, the connectivity of the non-separating arc complex can be deduced from that of a certain poset of unimodular vectors, see Proposition~\ref{prop:connectivity B} and Theorem~\ref{connectivity of U} that generalize earlier results of \cite{MirzaiivdK,Fri17,S22a}. We then deduce the connectivity of the disordered arc complex (Theorem~\ref{thm: connectivity D}) mimicking the geometric argument of \cite{HVW}. The proof uses the interplay between the {\em arc genus}, that is the number of $X$--summands that can be split off, and the {\em hyperbolic genus}, that is the number of $\HH$-summands one can split off. It requires most importantly having a good grasp on how the  arc genus behaves when ``cutting along a simplex of arcs".  The key properties of the ring used are the finite unitary stable rank, that allows cancellation of $\HH$--summands, and the fact that, for rings with finite Bass stable rank and $M$ with large enough hyperbolic genus, any $l:M\to R$ unimodular restricts to a unimodular map on some hyperbolic summand, see Proposition~\ref{prop: finiteness of usr*}. The stability slope $2/3$ comes from the fact that the arc genus behaves just like the geometric arc genus when cutting arcs, under our assumption on the rings, see Corollary~\ref{cor R of a kernel}. It was surprising to us that this geometrically inspired argument works for rather general rings; there are for example no restriction on the characteristic. All the way through, our results have slight improvements for PIDs because of their special proporties recorded in Lemma~\ref{lem: pid}.  

The identification between the disordered arc complex and the destabilization complex requires a weak cancellation property for formed spaces with boundary. We give in the paper two cancellation results, with very different proofs: one valid for general rings with finite unitary stable rank, see Theorem \ref{thm: general cancellation}, and a second result that works for PIDs and is deduced from a classification of formed spaces with boundary in that particular case, see Theorems~\ref{thm: classification} and \ref{thm: cancellation}.

\subsection*{Organization of the paper} In Section~\ref{sec: formedd} we define the category of formed spaces with boundaries, and study its properties. We define and relate the arc and hyperbolic genera. Section~\ref{sec: complexes} studies algebraic arc complexes and proves the high connectivity of the disordered arc complex. Section~\ref{sec:cancell} gives the equivalence (up a skeleton) between the disordered arc complex and the complex of destabilization. The results of Sections~\ref{sec: complexes} and~\ref{sec:cancell} are then combined in Section~\ref{sec:stability}, to prove the stability theorems.

\subsection*{Acknowledgements}
The authors would like to thank Jeremy Miller, Peter Patzt, Dan Petersen and most particularly Oscar Randal-Williams for discussions at various stages of this paper, as well as the anonimous referee for helpful comments and suggestions. 
The second author was supported by the Danish National Research Foundation through the Copenhagen Centre for Geometry and Topology (DNRF151), and the first author thanks the Mathematics Department of the University of Toronto, and in particular Alexander Kupers, for continuous support while this article was written. 

\resumetoc

\tableofcontents

\section{Formed spaces with boundary} \label{sec: formedd}

In this section, we  define a braided monoidal category $\Fd$ of {\em formed spaces with boundary}, that is an algebraic version of the category of bidecorated  surfaces of \cite{HVW}. 
A {\em bidecorated  surface} is an (oriented) surface $S$ equipped with two marked (oriented) intervals in its boundary. It has a monoidal structure induced by gluing intervals in pairs. The category of {\em formed spaces with boundary} defined here will be the algebraic shadow of the category of bidecorated  surfaces, with objects behaving like the first homology of the surface relative to the two intervals, with an appropriately defined intersection form, and a ``boundary map" corresponding to the boundary map in the long exact sequence of homology groups. We will also construct a monoidal structure on $\Fd$ that corresponds to the one of bidecorated  surfaces.

We start by recalling what formed spaces are, in the context relevant to symplectic groups, that is where the forms are alternating forms. 

\medskip

Let $R$ be a commutative ring. 

\begin{Def}
    A {\em formed space} is a pair $(M,\la)$ with $M$ a finitely generated free $R$--module and $\lambda:M\otimes M\to R$ an alternating form on $M$, i.e.~$\la$ is a $R$-bilinear map such that $\la(v,v)=0$ for all $v\in M$. We denote by $\F$ the category of formed spaces with morphisms the module maps that preserve the forms. 
    
    The category $\F$ is symmetric monoidal, with monoidal structure $\oplus$ induced by taking the direct sum of modules and orthogonal direct sum of forms. 
\end{Def}

The {\em hyperbolic form} is the formed space $\HH=(R^2,\la_{\HH})$ for $\la_{\HH}$ given in the standard basis by the matrix
$$\la_{\HH}=\begin{pmatrix}
    0&1\\ -1&0
\end{pmatrix}.$$

The $2g$-dimensional hyperbolic formed space $\HH^{\oplus g}$ is isomorphic to the formed space $(H_1(S_g),\la)$ with $S_g$ a closed surface of genus $g$ and $\la$ the intersection pairing on $S_g$, and the symplectic group $\Sp_{2g}(\Z)$ identifies with its automorphism group: 
$$\Sp_{2g}(R)\cong \Aut_{\F}(\HH^{\oplus g}), $$
see e.g.~Example 6.1(i) in \cite{MirzaiivdK}. 

\smallskip

We will work with the following enhanced version of the category $\F$: 

\begin{Def} The category $\Fd$ of {\em formed spaces with boundary} has objects triples 
$(M,\lambda,\del)$ where $(M,\la)$ is a formed space and $\del:M\to R$ a linear map. 
Morphisms in $\Fd$ are structure-preserving module maps. 
\end{Def}
The subcategory of formed spaces $(M,\lambda,\del)$ with $\del=0$ is isomorphic to the category $\F$ defined above. 
The case of particular importance to us is actually the opposite case, namely the triples $(M,\lambda,\del)$ where $\del$ is surjective instead.

  We define a  monoidal structure on the category $\Fd$ of such objects by setting
  $$(M_1,\lambda_1,\del_1)\#(M_2,\lambda_2,\del_2)=(M_1\oplus M_2,\lambda_1\#\la_2,\del_1+\del_2)$$
  where $$\la_1\#\la_2=\begin{pmatrix}\la_1& \del_1^T\del_2\\ -\del_2^T\del_1& \la_2\end{pmatrix}.$$
  The  monoidal unit is $0:=(0,0,0)$. 
  On morphisms the monoidal structure is given by direct sum. 
  One checks that this monoidal structure is strictly associative.

\begin{rem} While the above definition of the monoidal structure was guided by the geometric gluing of surfaces, it is also a natural choice if one thinks of it purely algebraically. Indeed, 
   an object in $\Fd$ is a triple $(M,\la,\del)$ with $\la \in \Lambda^2 M^\vee$ and $\del \in M^\vee$. 
   There are natural isomorphisms $(M_1 \oplus M_2)^\vee\cong M_1^\vee \oplus M_2^\vee$ and $\Lambda^2 (M_1 \oplus M_2)^\vee \cong \Lambda^2 M_1^\vee \oplus \Lambda^2 M_2^\vee \oplus M_1^\vee \otimes M_2^\vee$, making the choices 
    $\del_1 \# \del_2=\del_1\oplus\del_2$ and  $\lambda_1 \# \lambda_2=\lambda_1 \,\oplus\, \la_2 \,\oplus\, \del_1 \otimes \del_2$ with respect to the above canonical decomposition, ``most obvious" choices. These correspond to the choice given in the above definition. 
\end{rem}

Note that the functor $\F\to \Fd$ taking $(M,\la)$ to $(M,\la,0)$ is a monoidal functor. On the other hand, the forgetful functor $\Fd\to \F$ is not monoidal. We will see below that the  monoidal structure of $\Fd$ is chosen so that there is a monoidal functor from the category $\M_2$ of bidecorated  surfaces mentioned above, to $\Fd$.

\smallskip

Our main example of  a formed space with boundary (and also the minimal such object with $\del$ surjective)  is $$X=(R,0,\id).$$
It is an example than can be associated to the disc in the category of bidecorated  surfaces,  as we will explain in the following section.

\subsection{Formed spaces with boundary from bidecorated  surfaces}\label{sec:surfaces}

Recall from \cite{HVW} the category $\M_2$ of bidecorated  surfaces. It has objects triples $(S,I_0,I_1)$, where $S$ is an oriented surface and $I_0,I_1$ are two disjoint compatibly oriented intervals marked in its boundary, and morphisms are mapping classes fixing the marked intervals.  

To a bidecorated  surface $S=(S,I_0,I_1)$, we associate a new surface 
$S^+=S\cup H$ obtained from $S$ by attaching a handle $H=I\x I$ to $S$ along the identification of $\del I \times I$ with $I_0\sqcup I_1\inc \partial S$. (See Figure~\ref{fig:Splus}.) 
Since $(S,I_0,I_1)$ is oriented datum then $S^+$ is naturally oriented, so there is a natural intersection pairing on $H_1(S^+;R)$ induced, under the Poincaré duality isomorphism $H_1(S^+;R) \cong H^1(S^+,\del S^+;R)$, by the (relative) cup product $H^1(S^+,\del S^+;R) \otimes H^1(S^+,\del S^+;R) \xrightarrow{\cup} H^2(S^+,\del S^+;R) \cong H_0(S^+;R)=R$. 
\begin{figure}
    \centering
    \includegraphics[width=0.6\textwidth]{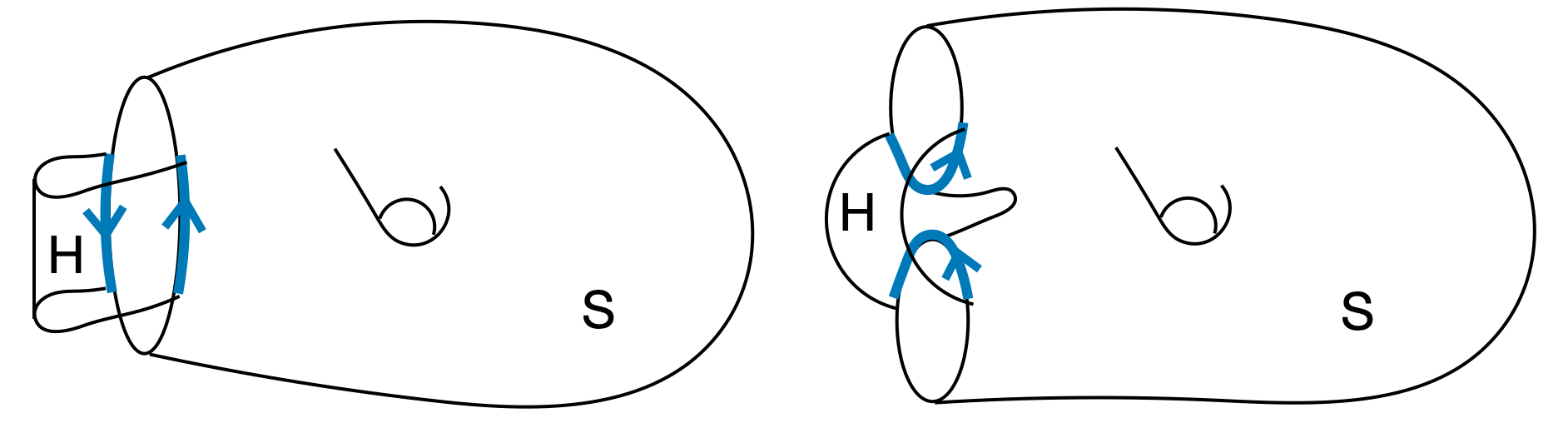}
    \caption{Two bidecorated  surfaces $(S,I_0,I_1)$ and their associated surface $S^+=S\cup H$}
    \label{fig:Splus}
\end{figure}
The long exact sequence in homology $H_*(-)=H_*(-;R)$ for the pair $(S^+,H)$ 
$$0=H_1(H)\rar H_1(S^+) \rar H_1(S^+,H)\cong H_1(S,I_0\sqcup I_1) \xrightarrow{0} H_0(H)\xrightarrow{\cong} H_0(S^+)$$
shows that there is an isomorphism $ H_1(S^+) \cong H_1(S,I_0\sqcup I_1)$. 
The inverse map $$M=H_1(S,I_0\sqcup I_1) \rar H_1(S^+)$$ is given explicitly by  representing classes in $M$ by collections of arcs and circles, and closing any arc component in $H_1(S,I_0\sqcup I_1)$ using the core $I\x \{\frac{1}{2}\}$ of the handle. 

Now we define a functor 
$$\Hd: \M_2 \rar \Fd$$
by 
$$\Hd(S,I_0,I_1):=(H_1(S,I_0\sqcup I_1),\la,\del)$$ where $\del:H_1(S,I_0\sqcup I_1)\to \tilde H_0(I_0\sqcup I_1)\cong R\langle b_1 - b_0\rangle$, for $b_0,b_1$ the midpoints of $I_0,I_1$, is the boundary map in the long exact sequence for reduced homology,  and $\la$ is the intersection pairing on $H_1(S^+)$, identified with $H_1(S,I_0\sqcup I_1)$ by the above isomorphism. 
On morphisms, the functor $F$ takes a mapping class to its induced map on relative homology.

\begin{rem}[Curved forms] \label{remark curved symmetry}
The above construction of the surface $S^+$ may seem a little ad hoc. 
It is possible to instead work directly with $(H_1(S,I_0\cup I_1),\del)$, replacing the alternating form $\la$  with a {\em curved form} $\la'$, satisfying the condition $\lambda'(x,y)+\lambda'(y,x)=2\del(x)\del(y)$. 
This ``curved'' symmetry equation arises indeed geometrically when defining an intersection product on $H_1(S,I_0\cup I_1)$: 
one can define an asymmetric intersection product $\la'(a,b)$ on classes $a,b$ with potentially non-trivial boundary, by choosing a representative of $a$ with boundary in $(0,\frac{1}{2})$ in $I_0$ and $I_1$ and for $b$ with boundary in $(\frac{1}{2},1)$ instead. 
One can check that the resulting form is not alternating on classes that have non-trivial boundary, and instead satisfies the above equation. 

There is an equivalence of categories between 
the category of alternating forms with boundary $(M,\la,\del)$ and that of curved forms with boundary $(M,\la',\del)$, where the correspondence is given by setting $\la'(x,y)=\la(x,y)+\del(x)\del(y)$. 
This form $\la'$ will play a role a few places in the paper, for example to define what it means for a formed space with boundary to be non-degnerate, see Section~\ref{sec:genus}.  
\end{rem}

Note that adding a handle to form the surface $S^+$ from the tuple $(S,I_0,I_1)$, interpreted as the association $(H_1(S,I_0\sqcup I_1),\la,\del) \mapsto (H_1(S^+),\la)$, becomes the forgetful functor $\Fd\to \F$. 
Note also that the formed space $(H_1(S,I_0\sqcup I_1),\la,\del)$ contains $(H_1(S),\la|_{H_1(S)},0)$ as a subspace, where $H_1(S)$ identifies with $\ker(\del)$, and where $\la|_{H_1(S)}$ is the standard intersection pairing of $H_1(S)$. This type of subspace will play an important role below. (See also Remark~\ref{rem:HorX}.)

\begin{ex}\label{ex:F(D)}
Consider the disc $D=(D^2,I_0,I_1)$ in the category $\M_2$. We have $$F(D)=\big(H_1(D^2,I_0 \sqcup I_1),\la,\del\big)=(R,0,\id)=X.$$
Explicitly,  
$H_1(D^2,I_0,I_1)\cong R$ generated by an arc $\rho$ from  $b_0$ to $b_1$ in the disc. The surface $(D^2)^+$ is a cylinder  and the generator if $H_1((D^2)^+)$ is represented by the middle circle in the cylinder. 
\end{ex}

\smallskip

Recall from \cite[Sec 3.1]{HVW} that the monoidal structure of $\M_2$ is defined for $S_1=(S_1,I^1_0,I^1_1)$ and $S_2=(S_2,I_0^2,I_1^2)$ by 
$$S_1\hash S_2=(S_1\cup S_2/\sim\,,\,I_0\,,\,I_1)$$
where, for $j=0,1$, the equivalence relation identifies the second half of $I_j^1$ with the first half of $I_j^2$, and the interval $I_j$ is the union of the first half of $I_j^1$ and the second half of $I_j^2$.

A particulary interesting example for us is the surfaces obtained as iterated sums of discs in this category: by \cite[Lem 3.1]{HVW}, there is an isomorphism 
$$D^{\hash 2g+i}\cong (S_{g,i},I_0,I_1),\ \  i\in\{1,2\} $$
to a surface of genus $g$ with one or two boundary components, where the marked intervals lie in distinct boundaries in the latter case. 

Taking homology, we have 
$$H_1(S_1\hash S_2,I_0\sqcup I_1)\cong H_1(S_1,I^1_0\sqcup I^1_1)\oplus H_1(S_2,I_0^2\sqcup I_1^2), $$
with boundary map $\del=\del_1+\del_2$. 
The following proposition shows that the intersection pairing of 
$H_1(S_1\hash S_2,I_0\sqcup I_1)$ identifies with the sum of the intersection pairings for $S_1$ and $S_2$, with the sum defined above in the category $\Fd$: 

  \begin{prop}\label{prop:Hdmonoidal}
The functor $\Hd:\M_2\to \Fd$ is monoidal. In particular, for $i\in \{1,2\}$,  $$\Hd(D^{\hash 2g+i})=X^{\# 2g+i}=(R^{2g+i},\la_{X^{\hash 2g+i}},\del_{X^{\hash 2g+i}})$$
where $X=(R,0,\id)=\Hd(D)$ (see Example~\ref{ex:F(D)}).   
    \end{prop}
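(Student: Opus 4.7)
The second identification $\Hd(D^{\hash 2g+i})=X^{\# 2g+i}$ is an immediate induction from $\Hd(D)=X$ (Example~\ref{ex:F(D)}) together with the claimed monoidality. So my real task is to produce, for any pair of bidecorated surfaces $S_1,S_2$, a natural isomorphism $\phi:\Hd(S_1)\#\Hd(S_2)\xrightarrow{\cong}\Hd(S_1\hash S_2)$ in $\Fd$, and to check the coherence that promotes it to a monoidal structure.

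I would build $\phi$ from the already-announced isomorphism
$$H_1(S_1,I_0^1\sqcup I_1^1)\oplus H_1(S_2,I_0^2\sqcup I_1^2)\xrightarrow{\cong} H_1(S_1\hash S_2,I_0\sqcup I_1)$$
at the level of underlying modules, which I would deduce from Mayer--Vietoris for the decomposition $S_1\hash S_2=S_1\cup S_2$ (with intersection two disjoint arcs coming from the identification of the second halves of $I_j^1$ with the first halves of $I_j^2$), combined with the long exact sequence of a triple in order to replace the ``extended boundary'' $A_1\cup A_2=I_0^1\cup I_1^1\cup I_0^2\cup I_1^2$ by the honest boundary $I_0\sqcup I_1$; the two differ only by arcs that deformation retract onto $I_0\sqcup I_1$. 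The matching $\del=\del_1+\del_2$ then drops out of naturality of the connecting map for $\widetilde H_0$ once one observes that every midpoint $b_j^k$ of $I_j^k$ represents the same reduced $0$-class as the midpoint $b_j$ of $I_j$.

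The heart of the proof is the identification of the intersection pairing with the block matrix $\la_1\#\la_2$. For classes $x,y\in H_1(S_j,\cdot)$ on the same side, representatives can be chosen entirely inside $S_j$ and closed through a thin sub-strip of the handle $H$ of $(S_1\hash S_2)^+$ that is isotopic in $(S_1\hash S_2)^+$ to the handle used to build $S_j^+$; intersections are then local and reproduce $\la_j(x,y)$. The main subtlety is the mixed pairing $\la(x_1,y_2)$ with $x_1\in H_1(S_1,\cdot)$ and $y_2\in H_1(S_2,\cdot)$. Here I would pass through the curved intersection form $\la'$ of Remark~\ref{remark curved symmetry}: representing $x_1$ by an arc with boundary in the first halves of $I_0^1,I_1^1$ and $y_2$ by an arc with boundary in the second halves of $I_0^2,I_1^2$ places the two representatives in disjoint regions of $S_1\hash S_2$ (on opposite sides of the two cut arcs), so $\la'(x_1,y_2)=0$; the conversion $\la=\la'-\del\otimes\del$ of that remark then yields $\la(x_1,y_2)=\pm\del_1(x_1)\del_2(y_2)$, matching the off-diagonal block $\pm\del_1^T\del_2$ of $\la_1\#\la_2$, and antisymmetry of $\la$ pins down the $(2,1)$-block.

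The main obstacle is the bookkeeping of signs and orientation conventions between the $(1,2)$ and $(2,1)$ mixed blocks, and the verification that the thin-sub-strip argument in the same-side case really identifies the relevant subsurface of $(S_1\hash S_2)^+$ with $S_j^+$ up to an isotopy preserving intersections. Once this is in place, naturality of $\phi$ in $(S_1,S_2)$, and hence the strict associativity and unit coherence needed for a monoidal functor, follow from the naturality of Mayer--Vietoris and of the boundary and intersection constructions.
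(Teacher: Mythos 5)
Your route is genuinely different from the paper's. For the underlying-module identification, you run Mayer--Vietoris on the decomposition $S_1\hash S_2 = S_1\cup S_2$ (glued along arcs) and then an exact sequence of a triple to pass from the enlarged boundary $I_0^1\cup I_1^1\cup I_0^2\cup I_1^2$ to $I_0\sqcup I_1$; the paper instead observes $(S_1\hash S_2)^+ \cong S_1^+\cup_{H^1_r\sim H^2_\ell} S_2^+$ and applies Mayer--Vietoris directly to the closed-up surfaces, so the passage to relative homology is already built in. Both work, though the paper's choice is cleaner precisely because the form $\la$ is \emph{defined} on $H_1(S^+)$, so keeping the $+$-surfaces in play throughout avoids having to transport the intersection product back and forth. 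For the form itself, the paper picks a basis in which almost all pairings are visibly correct and verifies only the single crossing $\la(\rho\cup C,\rho'\cup C)=1$ by looking inside the handle (Figure~\ref{fig:Hcross}); you instead handle the diagonal blocks by a thin-sub-strip/isotopy argument and the off-diagonal block by passing to the curved form of Remark~\ref{remark curved symmetry} and using disjointness of representatives.

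The curved-form trick for the mixed block is elegant, but as you wrote it there is a real, unresolved sign, and you flag it yourself (``the off-diagonal block $\pm\del_1^T\del_2$''). Taking the algebraic convention of Remark~\ref{remark curved symmetry} literally, $\la'=\la+\del\otimes\del$, your disjoint-representative claim $\la'(x_1,y_2)=0$ forces $\la(x_1,y_2)=-\del_1(x_1)\del_2(y_2)$, which is the \emph{opposite} sign from the $(1,2)$-block of $\la_1\#\la_2$ as defined in Section~\ref{sec: formedd}. (Sanity check on $X^{\#2}$: the proof of Proposition~\ref{prop:beta} records $\la'(e_1,e_2)=2$, not $0$, even though $\rho_1,\rho_2$ are disjoint in the cylinder.) So either you need $\la'(y_2,x_1)=0$ rather than $\la'(x_1,y_2)=0$ --- equivalently, you must verify that the geometric asymmetric product described in Remark~\ref{remark curved symmetry} is the transpose of the algebraic $\la'$ rather than $\la'$ itself --- or you need an explicit orientation computation to settle which. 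As it stands the argument as written lands on the wrong sign and the box around ``$\pm$'' has to be opened. Separately, your last sentence asserts that naturality of $\phi$ gives associativity and unit coherence ``for free''; naturality alone is not enough --- you should at minimum note that both monoidal structures are strict and that $\phi$ is induced by the canonical module isomorphism, so the associativity square commutes by inspection. These are both fixable, but they are the two places where the proposal, as written, does not close.
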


This result will be useful to have a geometric intuition for the formed space $X^{\# n}$, and allow us to import results from the geometric side to the algebraic side. 

    \begin{proof}[Proof of Proposition~\ref{prop:Hdmonoidal}]

Let $S= S_1 \hash S_2$ and $(M_i,\la_i,\del_i)= \Hd(S_i)$. We need to show that $\Hd(S)=(M_1\oplus M_2,\la,\del_1+\del_2)$ with $\la= \la_1 \# \la_2$. 

We first describe $S^+$ in terms of $S_1^+$ and $S_2^+$. 
Write $H=H_\ell \cup_C H_r$ for $H=I\x I$, $H_\ell=I\x [0,\frac{1}{2}]$, $H_r=I\x [\frac{1}{2},1]$ and $C=H_\ell\cap H_r$ is the core of the handle. 
We have  $$(S_1 \hash S_2)^+ = (S_1\cup {H_\ell})\cup_C (S_2\cup H_r) \simeq  S_1^+\cup_{H^1_r\sim H^2_\ell} S_2^+,$$ where we denoted by $H^1$ and $H^2$ the handles  attached to $S_1$ and $S_2$.

Using Mayer-Vietoris on the right-hand-side we find that the inclusions $S_i^+ \subset S^+$ induce an isomorphism of abelian groups $H_1(S_1^+) \oplus H_1(S_2^+) \cong H_1(S^+)$, i.e. $M=M_1 \oplus M_2$. 
Moreover, naturality of the long exact sequence of the pair and of Mayer-Vietoris gives that $\del= \del_1+ \del_2$. 

Pick bases $(a_1,\dots,a_{m-1},\rho)$ of $H_1(S_1,I_0^1\cup I_1^1)$ and  $(a'_1,\dots,a'_{n-1},\rho')$ of $H_1(S_2,I_1^2\cup I_1^2)$ with $\rho,\rho'$ arcs and the classes $a_i$ and $a'_j$ having trivial boundary.  
Consider the corresponding bases $(a_1,\dots,a_{m-1},\rho_m\cup C)$ of $H_1(S_1^+)$ and  $(a'_1,\dots,a'_{n-1},\rho'_n\cup C)$ of $H_1(S_2^+)$.  In these bases, we see that only the last generators of each basis have a potential intersection. In particular, we see that the intersection pairing of the sum is correct on all the other generators.

Thus, to finish the proof we need to check that $\la(\rho\cup C,\rho'\cup C)=1$. 
This intersection happens purely in the handle $H$, where the arcs cross, due to the definition of the glued surface $S_1\hash S_2$,  
see Figure~\ref{fig:Hcross}. 
\begin{figure}
    \centering
    \includegraphics[width=0.6\textwidth]{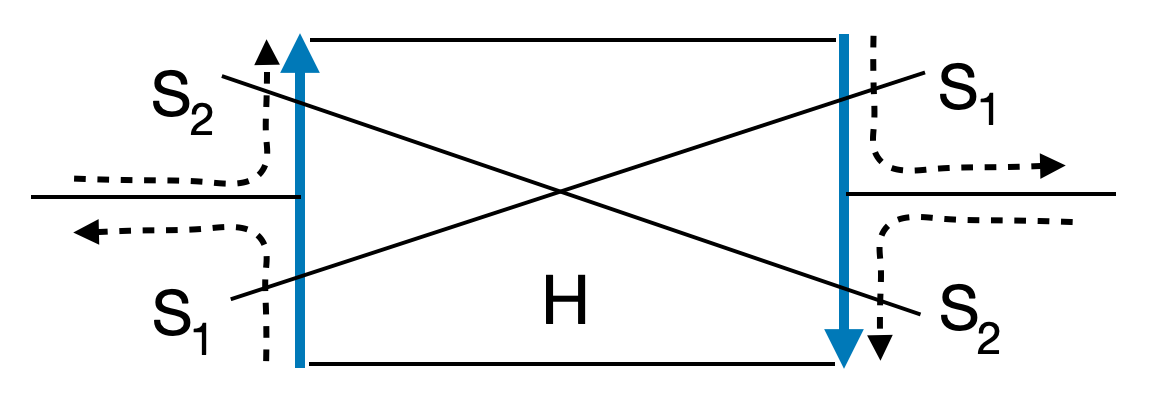}
    \caption{Transverse representatives of the extended arcs $\rho\cup C$ and $\rho'\cup C$ crossing in $H$}
    \label{fig:Hcross}
\end{figure}
      \end{proof}

\begin{rem}[Choices of bases for $X^{\# n}$ and their geometric description]
The underlying module of $X^{\# n}$ is $R^n$. 

As explained in Example~\ref{ex:F(D)}, $X=(R,0,\id)$ identifies with $F(D)$ and $1\in R$ corresponds to a generator of $H_1(D^2,I_0\sqcup I_1)$ which we pick as the unique isotopy class of arc in the disc going from $I_0$ to $I_1$. The standard basis of $X^{\# n}=(R^n,\la,\del)$ then corresponds to the collection of arcs in $D^{\hash n}$ from $I_0$ to $I_1$ going through each of the discs composing $D^{\hash n}$. These arcs are denoted $\rho_i$ in \cite{HVW}; see Section~4.1 of that paper for some of their properties. 

      An alternative geometric basis, more like that used in the proof above,  would be the curves $a_i=\rho_i*\rho_{i+1}^{-1}$ (or in homological notation $[\rho_i]-[\rho_{i+1}]$ for $i=1,\dots,n-1$), together with the arc $\rho_n$. The curves $a_i$ from a chain in the underlying surface of $D^{\hash n}$, that is $a_i$ and $a_{i+1}$ have intersection number 1 and curves further apart in the sequence do not intersect (see \cite[Lem 3.5]{HVW}). One then needs an additional arc, e.g.~$\rho_n$, to get a basis of $H_1(S,I_0\sqcup I_1)$. 

\end{rem}

\begin{rem}[Adding a handle $H$ versus summing with $X$]\label{rem:HorX}
    We can reinterpret the surface $S^+=S\cup H$ defined above as the underlying surface of $S\# D$ since adding the handle $H$ is essentially the same operation as summing with a disc in the category $\M_2$, with the only difference that we forget the marked intervals after gluing $H$. 
The  algebraic version of the identification $H_1(S,I_1 \sqcup I_2)\cong H_1(S^+)\cong H_1(S\# D)$, with their corresponding intersection forms, is the following: for any $(M,\la_M,\del_M)=(N,\la_N,\del_N) \# X$ one has $(\ker(\del_M),\la_M|_{\ker(\del_M)}) \cong (N,\la_N)$ as alternating forms. 
This fact will play a role in Section~\ref{sec: complexes}, in the proof of Lemma~\ref{lem:cut form properties}, when we study the formed space in the complement of a split $X$-summand. It will also be proved at that point in the more general algebraic context. 
\end{rem}

  \subsection{Braided monoidal structure} 

We will  formulate our main stability result as stability of the automorphism groups of objects of $\Fd$ with respect to the stabilization map summing with the object $X$.  To apply the framework of \cite{RWW}, or its generalization by Krannich in \cite{krannich}, one needs an action of a braided monoidal category on the category $\Fd$. We will define this action by giving a braided monoidal structure on the subcategory of $\Fd$ generated by $X$. This structure  will be constructed using the geometric model, namely the category $\M_2$. 

\medskip

  In \cite[Sec 3.2]{HVW}, it is shown that the monoidal category $\M_2$ is not braided, but that none-the-less the disc-object $D$ is a Yang-Baxter element in $\M_2$, that is, it comes equipped with a morphism
  $T:D\hash D\to D\hash D$ satisfying the braid relation: 
  $$(T\hash \id_D)(\id_D\hash T)(T\hash \id_D)=(\id_D\hash T)(T\hash \id_D)(\id_D\hash T).$$ The underlying surface of $D\hash D$ is a cylinder, and $T$ is the Dehn twist along the middle circle of the cylinder. It turns out that, just as in \cite{HVW}, the inverse Dehn twist $T^{-1}$ is actually most convenient for our purpose. Given that $\Hd:\M_2\to \Fd$ is a monoidal functor, it follows that the pair $(X,\beta)$, for $\beta=\Hd(T^{-1})$, is a Yang-Baxter element in $\Fd$.
  What we will show here is that, unlike $D$, the object $X$ satisfies the stronger condition that the full monoidal subcategory of $\Fd$ generated by it is braided, i.e.~$\beta$ is also natural in morphisms of $\Fd$. 
(We prove the naturality property of $\beta$ only for completness. It is not as such necessary for our main results.) 
 
   \begin{prop}\label{prop:beta}
      The twist $\beta:X^{\# 2}\to X^{\# 2}$ defines a braiding on the full monoidal subcategory $\FdX$ of $\Fd$ generated by $X$. 
Moreover, the resulting block braid  
  $$\beta_{n,m}: X^{\# n} \# X^{\#m} \xrightarrow{\cong} X^{\#m} \# X^{\#n}$$
is explicitly given in  matrix form,  with respect to the standard 
basis of $\Z^{n+m}$ by the matrix  
  $$\beta_{n,m}= \begin{pmatrix}
      A_{m,n} & I_m \\ (-1)^mI_n & 0
    \end{pmatrix}\ \ \textrm{with}\ \  (A_{m,n})_{ij}=(-1)^{i+1} 2 $$
 
  \end{prop}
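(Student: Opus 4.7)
My plan is to: (i) compute $\beta = \beta_{1,1}$ explicitly from its definition $\beta = \Hd(T^{-1})$, (ii) obtain the braid relation automatically from the monoidality of $\Hd$ and the Yang-Baxter property of $T^{-1}$ in $\M_2$, (iii) derive the matrix formula for $\beta_{n, m}$ by induction using the hexagon axioms, and (iv) verify naturality separately.

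For (i), the underlying surface of $X^{\# 2} = \Hd(D \hash D)$ is a cylinder with middle circle $c$, whose class in $H_1(D \hash D, I_0 \sqcup I_1)$ is $[\rho_1] - [\rho_2]$ in the standard arc basis. Both arcs cross $c$ transversely once with the same sign, so the inverse Dehn twist $T^{-1}_c$ acts on relative homology by $\rho_i \mapsto \rho_i + c$, giving $T^{-1}(\rho_1) = 2\rho_1 - \rho_2$ and $T^{-1}(\rho_2) = \rho_1$. Its matrix is therefore $\bigl(\begin{smallmatrix} 2 & 1 \\ -1 & 0 \end{smallmatrix}\bigr)$, matching the stated formula with $n = m = 1$; a short check confirms that it preserves $\la_{X^{\# 2}}$ and $\del_{X^{\# 2}}$. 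For (ii), Proposition~\ref{prop:Hdmonoidal} combined with the Yang-Baxter property of $T^{-1}$ in $\M_2$ (which holds by \cite[Sec 3.2]{HVW}) immediately gives the braid relation for $\beta$ in $\Aut_{\Fd}(X^{\# 3})$.

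For (iii), I would define $\beta_{n, m}$ by the standard $(n, m)$-shuffle composition of $\beta_{1, 1}$'s and prove the matrix formula by induction, using the hexagon recursions
$$\beta_{1, m} = (\id_{X^{\# m - 1}} \# \beta_{1, 1}) \circ (\beta_{1, m - 1} \# \id_X) \quad \text{and} \quad \beta_{n + 1, m} = (\beta_{n, m} \# \id_X) \circ (\id_{X^{\# n}} \# \beta_{1, m}).$$
The key algebraic observation making the induction tractable is that $A_{m, n}$ has the rank-one factorization $A_{m, n} = v \cdot \mathbf{1}_n^T$ with $v = (2, -2, \dots, (-1)^{m + 1} 2)^T \in R^m$, so the block multiplications telescope cleanly onto a small number of rank-one identities in the top-left block together with the obvious bookkeeping on the identity and zero blocks.

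Finally, for (iv), naturality of $\beta$ on $\FdX$ requires that for any morphisms $f: X^{\# a} \to X^{\# a'}$ and $g: X^{\# b} \to X^{\# b'}$ in $\FdX$, the square built out of $\beta_{a, b}, \beta_{a', b'}, f \# g$ and $g \# f$ commutes. The key leverage is that any morphism preserves $\del = \mathbf{1}^T$, i.e., satisfies $\mathbf{1}^T_{a'} f = \mathbf{1}^T_{a}$, which immediately handles the rank-one $A$-blocks when the first slot changes. The hardest part will be controlling the parity sign $(-1)^m$ in the bottom-left block when the second slot changes across morphisms of different rank parity; this requires additionally using the $\la$-preservation property of $g$ to produce a compatible identity matching $v$ to its analogue in the target. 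I expect this last compatibility to be the main technical obstacle.
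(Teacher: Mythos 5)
Your outline is correct in spirit and agrees with the paper on steps (i)–(ii): the explicit matrix for $\beta_{1,1}=\bigl(\begin{smallmatrix}2 & 1\\ -1 & 0\end{smallmatrix}\bigr)$ and its role as the image of $T^{-1}$ under the monoidal functor $\Hd$, with the hexagon/Yang--Baxter relation imported from \cite{HVW}, is exactly what the paper does. For step (iii) you take a genuinely different route: you would derive the matrix of $\beta_{n,m}$ from $\beta_{1,1}$ by the standard block-braid hexagon recursions and an induction using the rank-one structure $A_{m,n}=v\cdot\mathbf 1_n^T$. The paper instead reads the matrix off directly from the arc-level computation in \cite[Prop 5.7]{HVW} (again via $\Hd$). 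Your inductive route is more self-contained on the algebraic side and plausibly works --- a check of the first non-trivial case $\beta_{1,2}$ from $\beta_{1,1}$ matches the stated formula --- but it is a plan rather than a carried-out proof, and you should actually write out the induction using the factorization of $A_{m,n}$ to confirm that both recursions preserve the block shape.

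On step (iv) you have the right ingredients but have talked yourself into a difficulty that is not there. The paper checks naturality only for automorphisms $\phi\in\Aut(X^{\#n})$, $\psi\in\Aut(X^{\#m})$ --- which is what naturality of a braiding requires once one remembers that the categories in the stability machine (and $\FdX$ for this purpose) are groupoids, so there are no morphisms between $X^{\#n}$ and $X^{\#m}$ for $n\ne m$. Consequently, your worry about ``parity changing across morphisms of different rank'' never arises. What remains is precisely the two identities $A_{m,n}\phi=A_{m,n}$ and $\psi A_{m,n}=A_{m,n}$. The first follows from $\del$-preservation (columns of $\phi$ sum to $1$), as you noticed; for the second, $\del$-preservation alone is not enough --- the paper combines preservation of the curved form $\la'=\la+\del^T\del$ with the explicit computation $(\la')^{-1}\del^T=(-1)^{m-1}(1,-1,\dots,(-1)^{m-1})^T$, which shows that $\psi$ fixes the special vector $v$ and hence $\psi A_{m,n}=A_{m,n}$. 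You correctly anticipate that $\la$-preservation enters here, but you should concretely compute $(\la')^{-1}\del^T$ (or equivalently use Remark~\ref{rem char vector}) to turn that intuition into a proof; as written your step (iv) identifies an ``obstacle'' and stops short of resolving it.
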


  \begin{proof}
  The hexagon identity follows from the Yang-Baxter  equation, as a direct consequence of the Yang-Baxter equation for $T^{-1}$ \cite[Sec 3.2]{HVW}. The matrix description of $\beta_{n,m}$ can be read out from the effect of block braids on the arcs denoted $\rho_i$ in \cite{HVW}, whose homology classes correspond to the standard generators of $\Z^{n+m}$ under the functor $\Hd$: The computation of the braiding on these arcs is done in the proof of Proposition 5.7 in \cite{HVW}, and the matrix description for $\beta_{n,m}$ can be read directly from this computation.

Left is to check  naturality of the braiding,  i.e.~that for any $\phi \in \Aut(X^{\#n}) \subset GL_n(\Z)$ and any $\psi \in \Aut(X^{\#m}) \subset GL_m(\Z)$ we have $\beta_{n,m}\circ (\phi\oplus \psi)=(\psi\oplus \phi)\circ \beta_{n,m}$. 
This holds if $A_{m,n} \phi =A_{n,m}= \psi A_{m,n}$, which we check now. 

  Explicitly, if $\phi$ and $\psi$ are given in matrix form by matrices $B$ and $C$, then
  $$A_{n,m} \phi=A_{n,m} B= \begin{pmatrix}  2 \sum_j b_{j1} & \dots &2 \sum_j b_{jn} \\
    -2\sum_j b_{j1} & \dots & -2\sum_j b_{jn} \\
    \vdots& & \vdots\\
    (-1)^{m+1}2\sum_j b_{j1} & \dots& (-1)^{m+1}2\sum_j b_{jn}  \end{pmatrix} $$
  and
$$ \psi A_{m,n} = C A_{n,m}= \begin{pmatrix}2 \sum_i (-1)^{i+1} c_{1i} & 2 \sum_i (-1)^{i+1}c_{1i} & \dots &2 \sum_i (-1)^{i+1} c_{1i}\\
    2 \sum_i (-1)^{i+1} c_{2i} & 2 \sum_i (-1)^{i+1}c_{2i} & \dots &2 \sum_i (-1)^{i+1} c_{2i}\\
    \vdots & \vdots & & \vdots\\
  2 \sum_i (-1)^{i+1} c_{mi} & 2 \sum_i (-1)^{i+1}c_{mi} & \dots &2 \sum_i (-1)^{i+1} c_{mi}\\ \end{pmatrix}$$ 
The map $\del$ is represented by the matrix $\del= \begin{pmatrix}
    1 & 1 & \dots & 1
\end{pmatrix}$ 
with respect to the standard basis, hence the equation $\del B=\del$ gives that each column in $B$ must sum to 1, giving the identity $A_{m,n}B=A_{m,n}$. 

To check that $CA_{m,n}=A_{m,n}$ we will use both the invariance of $\del$ and $\la$: The fact that $\del C= \del$ gives that $C^T (\del^T \del) C = \del^T \del$, and 
since $C^T \la C = \la$, we have that $C^T \la' C= \la'$ where $\la'= \la + \del^T \del$ is the ``curved'' form of Remark \ref{remark curved symmetry}. In matrix form, we have 
$$\la'=\begin{pmatrix}
    1& 2& 2& \dots&2\\
    0& 1 & 2&\dots&2\\
    \vdots & \ddots & \ddots & \ddots 
    & \vdots\\
        0& \dots &0& 1 & 2 \\
    0&  & \dots & 0& 1
\end{pmatrix} \ \ \textrm{with}\ \ (\la')^{-1}=\begin{pmatrix}
    1& -2& \ 2& \dots& (-1)^{m-1} 2\\
    0& 1 & -2&\dots& \\
    \vdots & \ddots & \ddots & \ \ \ddots 
    & \vdots\\
          0& \dots &0& 1 & \ -2 \\
    0&  & \dots & 0& \ \ 1
\end{pmatrix}$$
from which we can compute that $$(\la')^{-1} \del^T= \begin{pmatrix}
(-1)^{m-1}\\
    \vdots \\
    -1\\
    1
\end{pmatrix}=(-1)^{m-1}\begin{pmatrix}
    1 \\ -1 \\ \vdots \\ (-1)^{m-1}
\end{pmatrix}.$$ 
Now putting together that $C^T= \la' C^{-1} (\la')^{-1}$ and $C^T\del^T=\del^T$, we get that $C ((\la')^{-1} \del^T)= (\la')^{-1} \del^T$, from which 
the identity  $C A_{m,n}=A_{m,n}$ follows. 
    \end{proof}

\begin{rem} \label{rem char vector}
The last part of the proof recovers the fact, also noted in \cite[Cor 3.5]{MPPRW}, that the vector $v_n=e_1-e_2+\dots +(-1)^{n-1} e_n$ in $\Z^n$ is preserved by the automorphism group of $X^{\# n}$. This special vector behaves differently depending on whether $n$ is odd or even: in the first case $\la(v_n,-)\equiv 0$ and it generates the radical of $\la$, while in the second case $\la(v_n,-)=\del$. The existence of this special vector of two different flavours will be visible in our description of $X^{\# n}$ in the odd and even case in Proposition~\ref{prop:X^n} below. 
Moreover, from the point of view of the curved form $\la'$, which is non-degenerate on $X^{\#n}$, the vector $v_n$ is characterized by $\la'(v_n,-)=\del$ on $X^{\#n}$ as one can check by evaluating on the standard basis.
\end{rem}

    \begin{rem}
Note that the failure of naturality of the twist $T$ in the category $\M_2$ is measured by conjugation with the hyperelliptic involution on one of the factors (see \cite[Prop 5.7]{HVW}). Now the hyperelliptic involution acts as $-1$ on each of the standard generators $[\rho_i]$ of $X^{\hash n}=H_1(D^{\# n},I_0\sqcup I_1)$ and hence the obstruction to naturality in the category $\M_2$ disappears when going to $\Fd$. 
\end{rem}

\subsection{The hyperbolic genus and the arc genus I}\label{sec:genus}

Recall from above the hyperbolic formed space $\HH=(R^2,\la_{\HH})$ in $\F$. 
Let $\HH^{\oplus g}=\HH\oplus\dots \oplus\HH$ its $g$-fold sum.

    The {\em hyperbolic genus} (called {\em Witt index} in \cite{Fri17}) of a formed space $M=(M,\la) \in \F$ is 
$$g_{\HH}(M,\la):=\max\{g\in\bbN \ |\ M\cong \HH^{\oplus g}\oplus M'\ \textrm{in}\ \F\}.$$

\begin{rem}
    We could likewise define the hyperbolic genus of a formed space with boundary $(M,\la,\del)\in\Fd$ using the same formula, replacing $\HH$ by the object $(\HH,0)$ of $\Fd$, setting the boundary map to be trivial. Note that $(\HH,0)^{\# g}=(\HH^{\oplus g},0)$ since the boundary map is trivial. In fact, the $(\HH,0)$--genus of $(M,\la,\del)$ in $\Fd$ is really the same thing as the hyperbolic genus of $(\ker \del,\la|_{\ker\del})$. It will appear in this latter form in our computations below. 
\end{rem}

For the rest of the paper the most relevant quantity will be the {\em arc genus} (or $X$--genus)  of a formed space $(M,\la,\del)$, defined as 
$$g_{X}(M,\la,\del):=\max\{g\in\bbN \ |\ M\cong M'\#X^{\# g} \ \textrm{in}\ \Fd\}.$$
This is the quantity that will rule the relevant stability bounds. 

The above definitions of hyperbolic  and arc genus are based on how many copies of $\HH$  or $X$ one can split off $(M,\la)$  or $(M,\la,\del)$. 
We will see below, in Lemma~\ref{lem:splitH}, that the splitting condition ``comes for free'' for such objects. The result will use that the formed spaces $X^{\# n}$ are non-degenerate, in a sense that we define now.   

Recall that a formed space $(M,\la)$ is {\em non-degenerate} if the form $\la:M\otimes M \to R$ is a non-degenerate pairing.  
The hyperbolic space $\HH^{\oplus g}$ is for example non-degenerate for every $g$. 
For formed spaces with boundary, we generalize this definition as follows: 
Recall that the curved form associated to a formed space $(M,\la,\del)$ is the form $\la'$ defined by $\la'(x,y)=\la(x,y)+\del(x)\del(y)$. 
\begin{Def}
A formed space with boundary $(M,\la,\del)$ is {\em non-degenerate} if the curved form $\la': M \otimes M \to R$ is a non-degenerate pairing, i.e.~if the map $M \to M^\vee$, $m \mapsto \la'(m,-)$ is an isomorphism. 
\end{Def}
The formed spaces with boundary $X^{\#n}$ are non-degenerate for any $n \geq 1$, as can be checked by inspecting their curved forms, whose matrix is given in the proof of Proposition~\ref{prop:beta} above. 

\begin{lem}\label{lem:splitH}
\begin{enumerate}[(i)]
    \item Any morphism $\HH^{\oplus g} \xrightarrow{f} (M,\la)$ in $\F$ gives an orthogonal splitting 
    $$(M,\la) \cong \HH^{\oplus g} \oplus (\im(f)^{\perp},\la|_{\im(f)^{\perp}}).$$ 
    \item Any morphism $X^{\#n} \xrightarrow{f} (M,\la,\del)$ in $\Fd$ gives a splitting 
    $$(M,\la,\del) \cong (M \setminus \im(f),\la|_{M \setminus \im(f)},\del|_{M \setminus \im(f)}) \# X^{\#n}$$
    where for $N \subset M$ we let $M \setminus N:=\{m \in M: \; \la'(n,m)=0 \; \forall n \in N\}$. 
\end{enumerate}
\end{lem}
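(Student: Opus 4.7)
The plan is to prove both parts by the same mechanism: a non-degenerate submodule of a formed space always admits an orthogonal complement with respect to the relevant pairing, and the complement then carries the residual structure. For (i) the relevant pairing is $\la$ itself, while for (ii) it is the curved form $\la'$ of Remark~\ref{remark curved symmetry}. Note that a morphism in $\Fd$ preserves both $\la$ and $\del$, hence automatically preserves $\la'$, and that $X^{\# n}$ is $\la'$-non-degenerate, as can be read off from the explicit matrix of $\la'$ given in the proof of Proposition~\ref{prop:beta}.

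For (i), I would first argue that $f$ is injective: any $x \in \ker(f)$ would satisfy $\la(x,-) = \la(f(x),f(-)) = 0$ on $\HH^{\oplus g}$, forcing $x=0$ by non-degeneracy of $\la_{\HH}^{\oplus g}$. Since the matrix of $\la_{\HH}^{\oplus g}$ has determinant $\pm 1$, the adjoint map $\im(f) \to \im(f)^{\vee}$, $v \mapsto \la(v,-)$, is an isomorphism; I can therefore define a canonical projection $\pi : M \to \im(f)$ by sending $m$ to the unique element of $\im(f)$ representing the functional $\la(-,m)|_{\im(f)}$. Then $m - \pi(m) \in \im(f)^{\perp}$, so $M = \im(f) \oplus \im(f)^{\perp}$ as $R$-modules, and this splitting is tautologically orthogonal, which gives the desired decomposition of $(M,\la)$. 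For (ii), the same argument applied to $\la'$ instead of $\la$ produces the $R$-module splitting $M = \im(f) \oplus (M \setminus \im(f))$, with $f$ providing an isomorphism $X^{\# n} \cong \im(f)$ of formed spaces with boundary.

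What remains is to upgrade this module splitting in (ii) to a $\#$-decomposition in $\Fd$. The boundary decomposes additively as $\del = \del|_{M \setminus \im(f)} + \del|_{\im(f)}$ by linearity. For the form, the defining identity $\la'(x,y) = 0$ for $x \in \im(f)$ and $y \in M \setminus \im(f)$ rearranges to $\la(x,y) = -\del(x)\del(y)$, which, after identifying the first factor of the $\#$-product with $M \setminus \im(f)$ and the second with $\im(f) \cong X^{\# n}$, matches exactly the off-diagonal block $-\del_2^T \del_1$ (and its transpose on the other side gives $\del_1^T \del_2$) in the matrix defining $\la_1 \# \la_2$. The one step requiring genuine care, and essentially the only obstacle in the proof, is precisely this sign bookkeeping in the off-diagonal comparison; once verified, the $\#$-decomposition claimed in (ii) is established.
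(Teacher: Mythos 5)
Your proposal is correct and follows essentially the same strategy as the paper's proof: non-degeneracy of the relevant pairing ($\la$ for (i), the curved form $\la'$ for (ii)) gives injectivity of $f$ and a canonical module splitting, after which one checks that $\del$ decomposes additively and that $\la$ agrees with $\la_1 \# \la_2$ on cross-terms via the identity $\la(x,y)=-\del(x)\del(y)$. The only difference is that you supply the explicit projection $\pi$ onto $\im(f)$ where the paper simply cites Friedrich's book for the analogous fact about orthogonal complements of non-degenerate subspaces; and your sign check on the off-diagonal block is precisely the verification the paper records in one line. (One small point worth being aware of: since $\la'$ is not skew-symmetric, "left" and "right" complements with respect to $\la'$ differ, and the paper's convention $\la'(n,m)=0$ for $n\in\im(f)$ is exactly the one that puts $M\setminus\im(f)$ as the \emph{left} $\#$-factor; your bookkeeping is consistent with this, but it is the spot where a reader could go wrong.)
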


\begin{proof}
Part (i) follows from the fact that $\HH^{\oplus g}$ is non-degenerate, see e.g.~\cite[Chap 3]{Fri17}. The same proof applies to Part (ii), as we detail now.  

Part (ii): Since $\la'$ is non-degenerate on $X^{\#n}$ then $f$ has to be injective and we get a splitting $M=(M \setminus \im(f)) \oplus \im(f)$ as $R$-modules. 
Observe that $\del=\del|_{M \setminus \im(f)} \oplus \del|_{\im(f)}$ splits with respect to the above decomposition. 
By definition, if $x \in M \setminus \im(f)$ and $y \in \im(f)$ then $\la'(y,x)=0$ and hence $\la(y,x)=-\del y \del x$ so that $
\la$ also agrees with the sum 
$\la|_{M \setminus \im(f)} \# \la_{\im(f)}$ on cross-terms. 
Thus, we get 
$$(M,\la,\del)=(M \setminus \im(f),\la|_{M \setminus \im(f)},\del|_{M \setminus \im(f)}) \# (\im(f),\la|_{\im(f)},\del|_{\im(f)}).$$ 
Since $f$ is injective and preserves the data then $(\im(f),\la|_{\im(f)},\del|_{\im(f)}) \cong X^{\#n}$, as required. 
\end{proof}

The following enhanced version of the above splitting result will be useful later, when identifying the destabilization complex:

\begin{lem} \label{lem: enhanced splitting}
    Let $f,f':N\# X^{\# n}\xrightarrow{\simeq} M$ be isomorphisms in $\Fd$ and suppose that $f|_{X^n}=f'|_{X^n}$. Then $f'=f\circ (g\# \id_{X^n})$ for $g\in \Aut_{\Fd}(N)$. 
    
\end{lem}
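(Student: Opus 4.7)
The plan is to reduce the statement to showing that any automorphism of $N\#X^{\#n}$ that restricts to the identity on the $X^{\#n}$--summand must split as $g\#\id_{X^n}$ for some $g\in\Aut_{\Fd}(N)$. Indeed, setting $h:=f^{-1}\circ f':N\#X^{\#n}\to N\#X^{\#n}$, our hypothesis $f|_{X^n}=f'|_{X^n}$ gives $h|_{X^{\#n}}=\id$, and proving $h=g\#\id_{X^n}$ yields the desired conclusion $f'=f\circ(g\#\id_{X^n})$.

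The main point is to recover $N$ intrinsically inside $N\#X^{\#n}$ as the orthogonal complement of $X^{\#n}$ with respect to the \emph{curved} form $\la'$. Applying Lemma~\ref{lem:splitH}(ii) to the inclusion $X^{\#n}\hookrightarrow N\#X^{\#n}$, one obtains
\[
(N\#X^{\#n})\setminus X^{\#n}=\{m\in N\#X^{\#n}:\la'(x,m)=0\ \text{for all}\ x\in X^{\#n}\}.
\]
A direct computation using the definition of $\#$ and $\la'(a,b)=\la(a,b)+\del(a)\del(b)$ shows that for $x\in X^{\#n}$ and $m=m_N+m_X\in N\oplus X^{\#n}$ one has $\la'(x,m)=\la'_{X^{\#n}}(x,m_X)$; since $\la'$ is non-degenerate on $X^{\#n}$, this complement is exactly the submodule $N\subset N\#X^{\#n}$.

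Now $h$ preserves both $\la$ and $\del$, hence also $\la'$, and it fixes $X^{\#n}$ pointwise. Therefore, for any $m\in N$ and any $x\in X^{\#n}$,
\[
\la'(x,h(m))=\la'(h(x),h(m))=\la'(x,m)=0,
\]
so $h(m)\in N$, i.e.\ $h(N)\subseteq N$; applying the same argument to $h^{-1}$ gives $h(N)=N$. Set $g:=h|_N$, which is an automorphism of the underlying module $N$ preserving the restrictions of $\la$ and $\del$, and is thus in $\Aut_{\Fd}(N)$.

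Finally, since $h$ preserves the splitting $N\#X^{\#n}=N\oplus X^{\#n}$ as $R$--modules (acting as $g$ on $N$ and as $\id$ on $X^{\#n}$), we have $h=g\#\id_{X^n}$, as desired. The only real work is the curved-form computation identifying $N$ as the $\la'$--complement of $X^{\#n}$; once that is in hand, the rest is formal. I don't anticipate any genuine obstacle, as the non-degeneracy of $\la'$ on $X^{\#n}$—already noted just after the definition of non-degenerate formed spaces with boundary—does all the heavy lifting.
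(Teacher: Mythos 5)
Your proof is correct and follows essentially the same line as the paper's: both arguments hinge on applying Lemma~\ref{lem:splitH}(ii) to identify $N$ as the $\la'$--orthogonal complement of the embedded $X^{\#n}$--summand, and then observing that since the two isomorphisms agree on $X^{\#n}$, they transport that complement to the same submodule of $M$. Your conjugation by $f^{-1}$ to reduce to an automorphism $h$ fixing $X^{\#n}$ is a harmless repackaging of the paper's direct comparison of $f(N)$ and $f'(N)$, and your explicit check that $\la'$ vanishes between the two summands is exactly the content the paper compresses into ``by the definition of $\#$ and the previous lemma.''
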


\begin{proof}
    By the definition of $\#$ and the previous lemma it follows from the assumption that $f(N)=M \setminus f(X^{\#n})$ and that $f'(N)=M \setminus f'(X^{\#n})$.
    Since $f|_{X^n}=f'|_{X^n}$, then $f(N)=f'(N)$, and the map $g=f' \circ f^{-1}|_N$ gives the result. 
\end{proof}

Thus, to get bounds on the genera it suffices to produce maps from $\HH^{\oplus g}$ or $X^{\#n}$ and the splitting is automatic. 
We will show later in Section \ref{sec:genus II} that the hyperbolic and arc genera are related. We start by relating now the formed space $X^{\# n}$ to the hyperbolic space $\HH^{\oplus g}$.

 \begin{prop} \label{prop:X^n}
 There are isomorphisms of formed spaces with boundary
 \begin{align*}
     &X^{\#2g}\cong (\HH^{\oplus g},\del=\la(e_g,-)) = \ (R^{2g}\,,\, \la_{\HH^{\oplus g}}\,,\,\del=\la(e_g,-))\\ 
     &X^{\#2g+1}\cong (\HH^{\oplus g},0)\ \# \ X \ = \  (R^{2g}\oplus R\,,\,\la_{\HH^{\oplus g}}\oplus 0\,,\,\del=pr_R).
 \end{align*}
     \end{prop}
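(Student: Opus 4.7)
The approach is to establish both isomorphisms by producing an explicit map into $X^{\# n}$ from either $X$ or $X^{\# 2}$, and then splitting off the image via Lemma~\ref{lem:splitH}(ii). The key ingredient in both cases is the characteristic vector $v_n \in X^{\# n}$ from Remark~\ref{rem char vector}, which satisfies $\la(v_n,-) \equiv 0$ when $n$ is odd, and $\la(v_n,-) = \del$ when $n$ is even.

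For the odd case $n = 2g+1$, the plan is as follows. Since $\del(v_n) = 1$ (an alternating sum with an odd number of terms) and $\la(v_n, v_n) = 0$, the map $X \to X^{\# 2g+1}$ sending $1 \mapsto v_n$ preserves both $\la$ and $\del$. Lemma~\ref{lem:splitH}(ii) then yields a splitting $X^{\# 2g+1} \cong N \# X$ with $N = \{v : \la'(v_n, v) = 0\}$. A short computation using $\la'(v_n,v) = \la(v_n,v) + \del(v_n)\del(v) = \del(v)$ identifies $N$ with $(\ker(\del), \la|_{\ker \del}, 0)$. To recognize this as $(\HH^{\oplus g}, 0)$, I would take the basis $a_i := e_i - e_{i+1}$ ($i = 1, \dots, 2g$) of $\ker \del$, verify that $(a_i)$ is a symplectic chain of length $2g$ (that is, $\la(a_i, a_{i+1}) = 1$ and $\la(a_i, a_j) = 0$ for $|i-j| \geq 2$), and invoke the standard fact that such a chain is isomorphic to $\HH^{\oplus g}$ in $\F$, via an inductive change of basis that orthogonalizes one hyperbolic plane at a time.

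For the even case, I would first verify the base case $n = 2$ directly by the basis change $(e_1, e_2) \mapsto (e_1 - e_2, e_2)$, which gives $X^{\# 2} \cong (\HH, \la(e_1,-))$. For $g \geq 2$, use that $\la(v_{2g},-) = \del$, $\del(v_{2g}) = 0$, and hence $\la(v_{2g}, e_1) = \del(e_1) = 1$, to build a morphism $X^{\# 2} \to X^{\# 2g}$ sending the basis $(e_1 - e_2, e_2)$ of $X^{\# 2}$ (chosen so that the form is $\la_\HH$ and $\del = (0,1)$) to $(v_{2g}, e_1)$. Applying Lemma~\ref{lem:splitH}(ii) then gives $X^{\# 2g} \cong N \# X^{\# 2}$, where the conditions $\la'(v_{2g}, v) = 0$ and $\la'(e_1, v) = 0$ reduce to $\del(v) = 0$ and $\la(e_1, v) = 0$; together these force $v_1 = 0$ and $\sum_{i \geq 2} v_i = 0$. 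Hence $N$ is spanned by $\{e_i - e_{i+1} : i = 2, \dots, 2g-1\}$, a chain of length $2g-2$ with $\del \equiv 0$, i.e.\ isomorphic to $(\HH^{\oplus g-1}, 0)$. Since the boundary of this first factor vanishes, the cross-terms in $\la_1 \# \la_2$ disappear, and combining with the base case yields $X^{\# 2g} \cong (\HH^{\oplus g-1},0) \,\#\, (\HH, \la(e_1,-)) \cong (\HH^{\oplus g}, \la(e_1,-))$ after reordering hyperbolic summands.

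The main obstacle is careful bookkeeping: correctly identifying the complement $N$ in each case (which requires switching between the form $\la$ and the curved form $\la'$), and then handling the cross-terms in the definition of $\#$ to perform the final identification. The overall strategy, namely splitting off a small sub-object using Lemma~\ref{lem:splitH}(ii) and reading off the remainder, is clean and directly parallels the inductive structure of the monoidal category $\Fd$.
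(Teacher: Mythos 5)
Your proof is correct, but it takes a genuinely different route from the paper's. The paper proceeds by a short induction on $n$, verifying the cases $n=1,2,3$ by explicit changes of basis and then reducing $n\ge 4$ to those using the recursion $X^{\#n}=X^{\#(n-2)}\#X^{\#2}$. You instead split off a small sub-object directly: in the odd case you map $X\to X^{\#2g+1}$ via the characteristic vector $v_n$ (using that $\la(v_n,-)\equiv 0$ and $\del(v_n)=1$), in the even case you map $X^{\#2}\to X^{\#2g}$ via the pair $(v_{2g},e_1)$ (using $\la(v_{2g},-)=\del$), and invoke Lemma~\ref{lem:splitH}(ii) to get the complementary summand, which you then identify with $(\HH^{\oplus g},0)$ or $(\HH^{\oplus g-1},0)$ via a symplectic chain of the differences $e_i-e_{i+1}$. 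Both proofs are valid; the paper's induction is somewhat shorter and more formulaic, whereas your argument makes the role of $v_n$ from Remark~\ref{rem char vector} structurally explicit and would have worked even if that remark had not already been available (the two facts about $v_n$ you rely on are elementary computations, so there is no circularity with the remark, which is stated before the proposition). The one place where you wave your hands slightly — the "standard fact" that a full-length symplectic chain of unimodular length-one intersections in a free module gives $\HH^{\oplus g}$ — is indeed true over any commutative ring by the inductive orthogonalization you describe, but spelling out the single step (replace $a_{i+2}\mapsto a_{i+2}+a_i$) would tighten the argument.
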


     \begin{proof}
         We will show the result by induction on the number $n$ of $X$--summands, doing the cases $n=1,2,3$ by hand first. 
         
         For $n=1$, the result is trivial. 

         For $n=2$, our definition of $X^{\#2}$ is $\Big(R^2, \la= \begin{pmatrix} 0 & 1 \\ -1 & 0 \end{pmatrix},\del=\begin{pmatrix} 1 & 1 \end{pmatrix}\Big)$. 
         Let $x,y$ denote the standard basis vectors in this basis. 
         Then $e:=x-y, f:=y$ defines a new basis such that 
         $$X^{\#2}=\Big(R\langle e, f \rangle, \la= \begin{pmatrix} 0 & 1 \\ -1 & 0 \end{pmatrix},\del=\begin{pmatrix} 0 & 1 \end{pmatrix}= \la(e,-) \Big)$$
          as required. 

         For $n=3$, our definition of $X^{\#3}$ is 
         $$X^{\#3}=\Big(R^3, \la= \begin{pmatrix} 0 & 1 & 1 \\ -1 & 0 & 1 \\ -1 & -1 & 0 \end{pmatrix},\del=\begin{pmatrix} 1 & 1 & 1 \end{pmatrix}\Big).$$
         Let $x,y,z$ denote the standard basis vectors in this basis. 
         Then $e:=x-y, f:=y-z, u:= x-y+z$ defines a new basis such that 
         $$X^{\#3}=\Big(R\langle e, f, u \rangle, \la= \begin{pmatrix} 0 & 1 & 0 \\ -1 & 0 & 0 \\ 0 & 0 & 0 \end{pmatrix},\del=\begin{pmatrix} 0 & 0 & 1 \end{pmatrix} \Big) \cong  (\HH,0) \# X $$
          as required. 

          For an even number $n=2g\geq 4$,  we have that $X^{\# 2g}=X^{\# 2g-1} \# X$, which by induction is isomorphic to $(\HH^{g-1},0) \# X \# X= (\HH^{g-1},0) \# X^{\# 2}$. 
          The result then follows by applying the above description of $X^{\#2}$. 

          Finally for  and odd number $n=2g+1\geq 5$, we have that $X^{\# 2g+1}= X^{\# 2(g-1)+1} \# X^{\# 2}$, which by induction is isomorphic to $(\HH^{g-1},0)\# X \# X^{\#2}$, and the result follows from the above expression for $X^{\#3}$. 
     \end{proof}

      \begin{prop} \label{prop: automorphisms X^n}
$\Aut(X^{\# 2g})\cong \operatorname{Stab}_{\Sp_{2g}(R)}(e_g)$ and $\Aut(X^{\# 2g+1})\cong \Sp_{2g}(R)$. 
       \end{prop}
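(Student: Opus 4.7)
The plan is to use the explicit descriptions from Proposition~\ref{prop:X^n} and directly analyze what preserving the form and boundary imposes.

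\textbf{Even case.} By Proposition~\ref{prop:X^n}, $X^{\# 2g} = (R^{2g}, \la_{\HH^{\oplus g}}, \del = \la(e_1, -))$. An element $\phi \in \Aut_{\Fd}(X^{\# 2g})$ is an $R$-linear automorphism of $R^{2g}$ preserving both $\la_{\HH^{\oplus g}}$ and $\del$. Preserving $\la_{\HH^{\oplus g}}$ exactly says $\phi \in \Sp_{2g}(R)$. The condition $\del \circ \phi = \del$ becomes $\la(e_1, \phi v) = \la(e_1, v)$ for all $v$. Using the symplectic identity $\la(e_1, v) = \la(\phi e_1, \phi v)$, this is equivalent to $\la(\phi e_1 - e_1, \phi v) = 0$ for all $v$. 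Since $\phi$ is surjective and $\la_{\HH^{\oplus g}}$ is non-degenerate, this forces $\phi(e_1) = e_1$. Hence $\Aut(X^{\# 2g}) = \operatorname{Stab}_{\Sp_{2g}(R)}(e_1)$.

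\textbf{Odd case.} By Proposition~\ref{prop:X^n}, $X^{\# 2g+1} = (R^{2g} \oplus R, \la_{\HH^{\oplus g}} \oplus 0, \del = \operatorname{pr}_R)$. The radical of $\la_{\HH^{\oplus g}} \oplus 0$ is exactly $R \cdot e_{2g+1}$ since $\la_{\HH^{\oplus g}}$ is non-degenerate. Any $\phi \in \Aut_{\Fd}(X^{\# 2g+1})$ must preserve this radical, so $\phi(e_{2g+1}) = c \cdot e_{2g+1}$ for some $c \in R^\times$. The condition $\del \circ \phi = \del$ evaluated at $e_{2g+1}$ gives $c = 1$, so $\phi(e_{2g+1}) = e_{2g+1}$. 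Moreover, for $i = 1, \dots, 2g$, the condition $\del(\phi(e_i)) = \del(e_i) = 0$ forces the last coordinate of $\phi(e_i)$ to vanish, so $\phi$ restricts to an automorphism $\bar\phi$ of $R^{2g} = \ker(\del)$. The form-preservation condition then says $\bar\phi \in \Sp_{2g}(R)$. Conversely, any such $\bar\phi$ extends by the identity on the last factor to an element of $\Aut_{\Fd}(X^{\# 2g+1})$, yielding the claimed isomorphism.

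The only subtle point is the odd case, where one must observe that $e_{2g+1}$ is characterized intrinsically (up to unit) as a generator of the radical of $\la$, and then pinned down exactly by the normalization $\del(e_{2g+1}) = 1$. Once $\phi(e_{2g+1}) = e_{2g+1}$ is established and $\phi$ is seen to preserve $\ker(\del)$, the block-triangular structure is immediate and the identification with $\Sp_{2g}(R)$ follows.
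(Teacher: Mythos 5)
Your proof is correct and follows essentially the same approach as the paper: both cases go through the explicit descriptions from Proposition~\ref{prop:X^n}, with the even case using non-degeneracy of $\HH^{\oplus g}$ to convert preservation of $\del=\la(e_1,-)$ into fixing $e_1$, and the odd case pinning down the unique vector in $\del^{-1}(1)\cap\operatorname{Rad}(\la)$ (your $e_{2g+1}$, the paper's $x$) and then restricting to $\ker\del$. Your write-up spells out a few steps the paper compresses, but it is the same argument.
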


See \cite[Thm 3.4]{MPPRW} for an alternative proof of this result. 
       
       \begin{proof}
       By Proposition \ref{prop:X^n} we have $\Aut(X^{\#2g})\cong \Aut(\HH^{\oplus g},\del=\la(e_g,-))$, but using that $\HH^{\oplus g}$ is non-degenerate we can re-write the right hand side as $\Aut(\HH^{\oplus g},e_g)= Stab_{Sp_{2g}(R)}(e_g)$. 

       By Proposition \ref{prop:X^n} we have $\Aut(X^{\# 2g+1})\cong\Aut((\HH^{\oplus g},0) \# X)$. 
       Let $\la^{\vee}: M \rightarrow M^{\vee}$ be defined by $\la^{\vee}(m)= \la(m,-)$. Then $\del^{-1}(1) \cap \ker \la^{\vee}$ is preserved by any automorphism, and by the above description it consists of a single element $x$ which generates $X$ in $(\HH^{\oplus g},0) \# X$. 
       Moreover, $\langle x \rangle ^{\perp}= \HH^{\oplus g}= \ker \del$ must also be preserved by any automorphism. 
       Thus, $\Aut((\HH^{\oplus g},0) \# X) \subset \Aut(\HH^{\oplus g})=\Sp_{2g}(R)$, which implies the result as the other inclusion is trivial. 
       \end{proof}

       In order to get our main results, we need to compare the classical stabilization map on (even) symplectic groups induced by $-\oplus \HH$ in the category $\F$ with our double stabilization map induced by $-\# X^{\#2}$ in $\Fd$. The following result says that the two stabilization maps agree. 
       \begin{lem} \label{lem: stabilisations agree}

The following diagram commutes
\begin{align*}
    \xymatrix{\Aut(X^{\# 2g+1}) \ar[rr]^-{- \# \id_{X}} \ar[d]^-{\cong} & & \Aut(X^{\# 2g+2})\ar[rr]^-{- \# \id_{X}}  \ar[d]^-{\cong} & & \Aut(X^{\# 2g+3}) \ar[d]^-{\cong}\\ \Sp_{2g}(R) \ar[rr]^-{\text{incl}} && \rm{Stab}_{\Sp_{2g+2}(R)}(e_{g+1}) \ar[rr]^-{\text{incl}} & & \Sp_{2g+2}(R)}
\end{align*}
where the vertical maps are the isomorphisms obtained in Proposition~\ref{prop: automorphisms X^n} and the horizontal maps are the stabilization maps. Moreover, the bottom horizontal composition is $-\oplus \id_{\HH}$. 
 
       \end{lem}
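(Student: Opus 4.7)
The plan is to chase an explicit isomorphism through both versions of the stabilization and verify that they match. The key tool is Proposition \ref{prop:X^n}, which gives specific models $X^{\# 2g+1} \cong (\HH^{\oplus g},0) \# X$ and $X^{\# 2g+3} \cong (\HH^{\oplus g+1},0) \# X$, together with the description in the proof of Proposition~\ref{prop: automorphisms X^n} of the isomorphism $\Aut((\HH^{\oplus g},0) \# X) \cong \Sp_{2g}(R)$: an automorphism $\phi$ preserves $\ker\del \cong \HH^{\oplus g}$ and fixes the unique element $x \in \del^{-1}(1) \cap \ker(\la^\vee)$ (it must send $x$ to some $cx$ with $\del(cx) = 1$), so $\phi$ restricts to an element $\psi = \phi|_{\ker\del}$ of $\Sp_{2g}(R)$, and conversely any such $\psi$ extends uniquely by fixing $x$. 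Thus it suffices to trace how this identification behaves under $- \# \id_{X^{\#2}}$.

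First I would unwind the right-hand side. Pick a basis with $\HH^{\oplus g}$ generated by $a_1,\dots,a_{2g}$ and write $b$ for the generator of the $X$-summand in $X^{\# 2g+1}$, so $\del(b)=1$, $\la(b,-)=0$ on $X^{\# 2g+1}$. By Proposition~\ref{prop:X^n} we have $X^{\#2} \cong (\HH, \del = \la(e,-))$, so choose a hyperbolic basis $(e,f)$ of this $\HH$ with $\la(e,f)=1$, $\del(e)=0$, $\del(f)=1$. Using the formula for the monoidal product, I would write out the form on $((\HH^{\oplus g},0)\#X)\#X^{\#2}$ in the basis $(a_1,\dots,a_{2g},b,e,f)$; the only cross-terms between the two summands are $\la(b,f) = \del_1(b)\del_2(f) = 1$.

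Next, I would identify inside $X^{\# 2g+3}$ the two pieces of the target decomposition $(\HH^{\oplus g+1},0) \# X$. Solving $\la'(v,-) \equiv \del$ (or equivalently $v \in \del^{-1}(1) \cap \ker \la^\vee$, cf.\ Remark~\ref{rem char vector}) gives $v = b - e$ as the unique generator of the new $X$-summand. Its orthogonal complement under $\la'$ is $\ker\del$, spanned by $a_1,\dots,a_{2g}, e, f-b$; a direct check shows that $\{e, f-b\}$ is a hyperbolic pair orthogonal to the $a_i$, hence $\ker\del \cong \HH^{\oplus g+1}$.

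Finally, I would evaluate $\phi \# \id_{X^{\# 2}}$ on this basis: it sends $a_i \mapsto \psi(a_i)$, $b \mapsto b$, $e \mapsto e$, $f \mapsto f$, and hence $b-e \mapsto b-e$ and $f-b \mapsto f-b$. Restricted to $\ker\del$, this is exactly $\psi \oplus \id_{\HH}$ on $\HH^{\oplus g} \oplus \HH$, which is the image of $\phi \# \id_{X^{\# 2}}$ in $\Sp_{2g+2}(R)$ under the identification of Proposition~\ref{prop: automorphisms X^n}. This gives commutativity of the square. The main obstacle is purely bookkeeping: correctly locating the $\HH$-summand complementary to the new $X$-summand inside the glued space, since the naive inclusion $\HH \subset X^{\# 2}$ is not $\del$-trivial. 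The computation of $v = b-e$ and of the orthogonal hyperbolic pair $\{e, f-b\}$ is what makes this bookkeeping go through.
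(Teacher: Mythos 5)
Your proof is correct and follows essentially the same strategy as the paper's: use Proposition~\ref{prop:X^n} to decompose $X^{\# 2g+3}$, explicitly locate the new $X$-summand (the unique $\del=1$ vector in $\ker\la^\vee$) and the complementary $\HH$-summand inside $\ker\del$, and verify that $\phi\#\id_{X^{\#2}}$ fixes both. The paper works in raw coordinates $x_1,\dots,x_{2g+3}$ and writes the hyperbolic pair as $(v_g-x_{2g+2},x_{2g+2}-x_{2g+3})$ and the $X$-generator as $v_g-x_{2g+2}+x_{2g+3}$, whereas you use the cleaner intermediate basis $(a_1,\dots,a_{2g},b,e,f)$, but the computations are the same.
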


    \begin{proof}
  The isomorphism $\Aut(X^{\# 2g+1})\cong \Sp_{2g}(R)$ comes from the identification $X^{\# 2g+1}=\HH^{\oplus g}\# X$ of Proposition \ref{prop:X^n}, together with the fact that the last $X$ summand is necessarily fixed by automorphisms, because it is generated by the only boundary 1 element $x=v_g$ that is orthogonal to $\ker\del_g$.
   The isomorphism $\Aut(X^{\#2g+2}) \cong \rm{Stab}_{\Sp_{2g+2}(R)}(e_g)$ is obtained from the previous one, using first that 
   \[X^{\#2g+2}=X^{\#2g+1}\# X= \HH^{\oplus g} \# X^{\#2},\]
   and then using the explicit identification given in the proof of Proposition \ref{prop:X^n} (with $x=v_g$ and $y=x_{g+2}$) to write $X^{\#2}=(\HH \langle e_{g+1},f_{g+1}'\rangle,\del=\la(e_{g+1},-))$, where $e_{g+1}=v_{g}-x_{2g+2}$ and $f_{g+1}'=x_{2g+2}$. 
   In particular, commutativity of the first square follows from the fact that if $\phi \in \Aut(X^{\#2g+1})$ then $\phi \# \id_X$ fixes $v_g-x_{2g+2}=e_{g+1}$ since it fixes both $v_g$ and $x_{2g+2}$. 
   
    From the proof of  Proposition~\ref{prop:X^n}, the isomorphism $\Aut(X^{\# 2g+3})\cong \Sp_{2g+2}(R)$ is obtained from the $2g+1$ case, using first that 
    $$X^{\#2g+3}=X^{\# 2g+1}\# X^{\# 2}=\HH^{\oplus g}\# X\#  X^{\#2} = \HH^{\oplus g}\oplus \HH \# X$$
    where for the last isomorphism we have $X^{\# 3}$ generated by $v_g,x_{2g+2},x_{2g+3}$.
    Now following the identifications given in the proof of that result, we see that the last $\HH$-summand has a new hyperbolic basis $(e_{g+1},f_{g+1})=(v_g-x_{2g+2},x_{2g+2}-x_{2g+3})$, while the last $X$-summand is generated by $v_g-x_{2g+2}+x_{2g+3}$.
    In particular notice that $e_{g+1}$ is the same as before while $f_{g+1}=f'_{g+1}-x_{2g+3}$, and that the last $X$-summand is generated by $e_{g+1}+x_{2g+3}:=v_{g+1}$. Hence, if $\varphi\in \Aut(X^{\# 2g+1}) \cong \rm{Stab}_{\Sp_{2g+2}(R)}(e_{g+1})$, its image $\varphi\# \id_{X}$ in $\Aut(X^{\# 2g+3})$ fixes $e_{g+1}$, because $\varphi$ fixes it, and fixes the last generator $x_{2g+3}$. Hence it fixes both the last $\HH$--summand and the last $X$--summand. Thus, the second square also commutes. 
    The fact that the bottom composition is just $-\oplus \id_{\HH}$ follows by construction. 
    \end{proof}

\begin{rem}[Other form parameters]\label{other form parameters}
   The orthogonal group $O_{n,n}(R)$ and the unitary group $U_{n,n}(R)$ can both be defined, just like the symplectic group $\Sp_{2n}(R)$, as the automorphism group of the hyperbolic space $\HH^n$, just changing the {\em form parameters}, working with quadratic or Hermitian forms instead of alternating forms, see e.g.~\cite{MirzaiivdK}. The category $\Fd$ can be defined in these other contexts, and the object $X$ exists just as well. Adapting accordingly the definition of the monoidal structure in $\Fd$, we can also ensure that $X\# X\cong (\HH,\del)$. However, it will no longer hold that $X^{\# n}$ has a large hyperbolic genus as $n$ increases, and hence the sequence of groups $\Aut(X^{\# n})$ will not be related to the classical groups $O_{n,n}(R)$ and $U_{n,n}(R)$ in those cases.
   For example, in the case of symmetric forms one can check that over $\Z$ the form associated to $X^{\#n}$ has signature $(1,n-1)$ and hence its hyperbolic genus is $1$ for any $n \ge 2$.
\end{rem}

\begin{rem}[Direct sum monoidal structure]
The category $\Fd$ has a monoidal structure $\#$ as discussed in Section \ref{sec: formedd}, but it also has a more obvious monoidal structure $\oplus$ given by direct sum $(M_1,\la_1,\del_1) \oplus (M_2,\la_2,\del_2)=(M_1\oplus M_2, \la_1 \oplus \la_2, \del_1 + \del_2)$. 
This second monoidal structure is symmetric but not relevant for our purposes since $\la_{X^{\oplus n}}=0$ for all $n$ so $\{X^{\oplus n}\}_{n\ge 0}$ is unrelated to symplectic groups. 
However, by Proposition \ref{prop:X^n} $X^{\#2g} \oplus X^{\#2h} \cong X^{\#2(g+h)}$ so the full subcategory $\mathcal{C}$ of $\Fd$ with objects formed spaces with boundary which are isomorphic to some $X^{\#2g}$ for $g \ge 0$, carries two natural monoidal structures: $\#$ and $\oplus$. 
It turns out that they are equivalent as monoidal structures but not as braided monoidal ones (hence in particular the face maps of the corresponding destabilization complexes are a priori different). More precisely, 
\begin{enumerate}[(i)]
     \item The identity functor $F=\text{Id}_\mathcal{C}: (\mathcal{C},\oplus) \to (\mathcal{C},\#)$ is strong monoidal. 
     In fact, we can give an explicit example of strong monoidality $L$ for the functor $F$ as follows: given $\underline{M_i}=(M_i,\la_i,\del_i) \in \Fd$ for $i \in \{1,2\}$ we let 
         \[\begin{tikzcd}
             L_{\underline{M_1}, \underline{M_2}}: F(\underline{M_1}) \# F(\underline{M_2}) \rar{\simeq} & F(\underline{M_1} \oplus \underline{M_2}) \\ (x,y) \rar & (x- (\del_2 y) \cdot m_1, y),
         \end{tikzcd}\]
         where $m_i \in M_i$ is uniquely characterized by $\del_i= \la_i(m_i,-)$. 
    \item There is no non-zero braided strong monoidal functor $(\mathcal{C},\oplus) \to (\mathcal{C},\#)$. 
\end{enumerate}
Part (i) can be checked explicitly using Proposition \ref{prop:beta} and Remark \ref{rem char vector} and using that $\del_i m_i=0$, and part (ii) follows from the fact that the braidings of Proposition \ref{prop:beta} never square to the identity on non-zero modules. 
\end{rem}

\subsection{Properties of the base ring} \label{subsec: properties of R}

In this section we will address the question of which (commutative) base rings $R$ satisfy the main results of this paper. These conditions will be some of the so called ``stability conditions''. 

Recall that a vector $m \in M$ is called \textit{unimodular} if it generates a split $R$-summand in $M$. 
Equivalently, $m$ is unimodular if there exits $\phi \in M^\vee$ such that $\phi(m)=1$. 
More generally, a sequence $(m_1,\dots,m_n)$ of vectors in $M$ is called \textit{unimodular} if the canonical map $R\langle m_1 \rangle \oplus \cdots \oplus R \langle m_n \rangle \to M$ is a split injection. 
Equivalently, $(m_1,\dots,m_n)$ is unimodular if there exist $\phi_1,\dots,\phi_n \in M^\vee$ such that $\phi_i(m_j)=\delta_{i,j}$. 

\begin{defn}\label{def:sr}\cite[Sec 5 and Def 6.3]{MirzaiivdK}
\begin{enumerate}
    \item  A ring $R$ satisfies the \textit{stable range condition} $(S_n)$ if for every unimodular vector $(r_0,\dots,r_n)$ in $R^{n+1}$ there are $t_0,\dots,t_{n-1} \in R$ such that the vector $(r_0+t_0 r_n, \dots,r_{n-1}+t_{n-1}r_n) \in R^n$ is unimodular. 
    If $n$ is the smallest such number we say that $R$ has \textit{stable rank} $sr(R)=n$, and it has $sr(R)=\infty$ if no such $n$ exists. 
    \item The {\em unitary stable rank $usr(R)$} is the smallest number $n\ge sr(R)$ and such that the subgroup $ESp_{2n+2}(R)$ of elementary matrices acts transitively on the set of unimodular vectors in $R^{2n+2}$. 
    \end{enumerate}
\end{defn}

The group $ESp_{2n+2}$ is defined in \cite[Sec 6]{MirzaiivdK}. We will not use the explicit description of this group, but rather the transitivity and cancellation properties derived from the unitary stable rank condition in \cite{MirzaiivdK,Fri17}. 

By definition, $sr(R)\le usr(R)$. The following are some examples of rings with bounds on the unitary stable ranks.

\begin{ex} \label{ex usr} \cite[Ex 6.5]{MirzaiivdK}
    \begin{enumerate}[(i)]
    \item A commutative Noetherian ring $R$ of finite Krull dimension $d$ satisfies $usr(R) \leq d+1$.  
    In particular,
if $R$ is a Dedekind domain then $usr(R) \leq 2$ and for a field $k$, the polynomial ring
$K = k[t_1,\dots,t_n]$ satisfies $usr(K) \leq n + 1$.
\item More generally, any $R$-algebra $A$ that is finitely generated as an $R$-module satisfies $usr(A) \leq d+ 1$
for $R$ again a commutative Noetherian ring of finite Krull dimension $d$.
\item Recall that a ring $R$ is called \textit{semi-local} if $R/J(R)$ is a left Artinian ring, for $J(R)$ the Jacobson
radical of $R$. A semi-local ring satisfies $usr(R) = 1$. 
\end{enumerate}
\end{ex}

The following result may be seen as somewhat surprising, as it gives a good behavior of hyperbolic summands under a genus condition ruled by the stable rank $sr(R)$, and not the unitary stable rank $usr(R)$. 
In the following section, we will combine this result with finiteness of the unitary stable rank to deduce some important properties of the hyperbolic genus that will be used for the rest of the paper.

\begin{prop} \label{prop: finiteness of usr*}
    Let $R$ be a (commutative) ring with $sr(R)<\infty$.  Let $(M,\la) \in \F$ with $g_{\HH}(M,\la) \ge \frac{sr(R)}{2}$ and let $l \in M^\vee$ be unimodular. 
    Then there is a hyperbolic summand $\HH \cong (H,\la|_H) \subset (M,\la)$ such that $l|_H$ is unimodular in $H^\vee$.
\end{prop}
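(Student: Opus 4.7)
The plan is to exhibit a hyperbolic pair $(e,f)$ in $M$ with $\la(e,f)=1$ and $l(e)=1$; any such pair spans a hyperbolic summand $H\cong\HH$ by Lemma~\ref{lem:splitH}(i), and $l|_H$ is unimodular since it takes the value $1$ on $e$. It therefore suffices to produce $e\in M$ with $l(e)=1$ such that $\la(e,-)\colon M\to R$ is surjective, after which an $f$ with $\la(e,f)=1$ exists.

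Fix a hyperbolic decomposition $M=\HH^{\oplus g}\oplus M'$ coming from $g_{\HH}(M)\ge \frac{sr(R)}{2}$, with symplectic basis $(e_1,f_1,\dots,e_g,f_g)$ and any basis $(m_1,\dots,m_k)$ of $M'$. Since $M'\perp\HH^{\oplus g}$, decomposing $e=e_H+e'$ shows that the image of $\la(e,-)$ contains the ideal generated by the coordinates of $e_H$ in the symplectic basis; it is therefore enough to find $e$ with $l(e)=1$ whose hyperbolic part $e_H$ is unimodular in $\HH^{\oplus g}$. Writing $a_i=l(e_i)$, $b_i=l(f_i)$, $c_j=l(m_j)$ and $I=l(M')\subseteq R$, the unimodularity of $l$ gives $(a_1,b_1,\dots,a_g,b_g)+I=R$, and the task reduces to finding a unimodular vector $(p_i,q_i)\in R^{2g}$ with $\sum_i p_i a_i + \sum_i q_i b_i\equiv 1\pmod{I}$: from any such vector one sets $e_H=(p_i,q_i)$ and picks $e'\in M'$ with $l(e')=1-l(e_H)\in I$.

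When $k=0$ the reduced task is immediate: $l|_{\HH^{\oplus g}}$ is already unimodular, so any $e\in\HH^{\oplus g}$ with $l(e)=1$ is unimodular and the non-degeneracy of $\la|_{\HH^{\oplus g}}$ produces the required $f$. For $k\ge 1$, the full tuple $(a_1,b_1,\dots,a_g,b_g,c_1,\dots,c_k)\in R^{2g+k}$ is unimodular of length $\ge sr(R)+1$, so the stable range condition provides $t_i\in R$ for which absorbing $c_k$ into the other entries yields a unimodular tuple of length $2g+k-1$. The plan is to realize this reduction as a symplectic automorphism of $(M,\la)$: symplectic transvections along vectors of the form $e_i+\alpha m_k$ and $f_i+\alpha m_k$, combined with transvections in $M'$ to re-establish orthogonality with the updated complement, produce a new hyperbolic decomposition whose associated $l$-values coincide with the reduced tuple. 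Iterating $k$ times (the bound $2g\ge sr(R)$ guarantees that each intermediate tuple of length $2g+k-j$ still has length $\ge sr(R)+1$ whenever $j<k$, so that $sr(R)$ may be applied again) one arrives at a hyperbolic decomposition in which $(a_i',b_i')\in R^{2g}$ is unimodular, reducing to the already handled case $k=0$.

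The main obstacle is implementing each stable-range step as a genuine automorphism of $(M,\la)$: the naive basis change $e_i\mapsto e_i+t_i m_k$ fails to preserve orthogonality with $M'$ as soon as $\la|_{M'}$ is non-trivial, so one must calibrate the transvections to correct for the cross-terms $\la(m_j,m_k)$ and simultaneously update the basis of $M'$. Verifying that after every reduction the remaining tuple of $l$-values is still unimodular, that the decomposition remains a genuine hyperbolic splitting, and that the hyperbolic genus is unchanged, is the technical heart of the argument.
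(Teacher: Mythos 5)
The reduction carried out in your second paragraph is sound: it does suffice to find a unimodular vector $(p_i,q_i)\in R^{2g}$ with $\sum_i p_i a_i+\sum_i q_i b_i\equiv 1\pmod{I}$, since setting $e_H=\sum p_i e_i+\sum q_i f_i$ and picking $e'\in M'$ with $l(e')=1-l(e_H)$ gives $e=e_H+e'$ with $l(e)=1$ and $\la(e,-)$ surjective (because $\la(e,-)$ restricted to $\HH^{\oplus g}$ equals $\la(e_H,-)$, which is onto by unimodularity of $e_H$ and non-degeneracy of $\la_{\HH^g}$), and Lemma~\ref{lem:splitH}(i) then does the rest.

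The gap is in the route you choose to produce $(p_i,q_i)$. You propose to realize each stable-range absorption $(a_i,b_i,c_1,\dots,c_j)\rightsquigarrow(a_i+t_ic_j,b_i+\cdot c_j,c_1',\dots,c_{j-1}')$ as an automorphism of $(M,\la)$, and you correctly diagnose the difficulty: the naive change $e_i\mapsto e_i+t_im_k$ breaks the orthogonality $\HH^{\oplus g}\perp M'$ whenever $\la|_{M'}$ is nonzero, and the ``calibration'' you sketch would have to simultaneously correct all cross-terms $\la(m_j,m_k)$ across $M'$ while keeping track of the $l$-values of the modified $m_j$'s. You explicitly defer this verification (``the technical heart of the argument''), and it is not at all clear that the transvections can be calibrated in the way you hope. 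As it stands, this is an unfilled gap, not merely a routine check.

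What makes this frustrating is that your own reformulation does not need the stable-range absorption to happen inside $\Aut(M,\la)$ at all. Once you have $I=l(M')$, choose $c\in I$ with $(a_1,b_1,\dots,a_g,b_g,c)$ unimodular in $R^{2g+1}$ (possible since $(a_i,b_i)+I=R$). Applying the stable range condition once to this length-$(2g+1)$ tuple (which requires only $2g\ge sr(R)$) gives $t_1,\dots,t_{2g}$ with $(a_1+t_1c,\dots,b_g+t_{2g}c)$ unimodular in $R^{2g}$, hence some $(p_i,q_i)$ with $\sum p_i(a_i+t_{2i-1}c)+\sum q_i(b_i+t_{2i}c)=1$; this $(p,q)$ is automatically unimodular and satisfies $\sum p_ia_i+\sum q_ib_i\equiv1\pmod{(c)}\subseteq I$, finishing your reduction in one step with no iteration and no automorphisms. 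This is essentially what the paper does: its Step~1 handles the special case $M=\HH^{\oplus g}\oplus(R,0)$ by the same one-shot absorption, and its Step~2 reduces the general case to Step~1 not by iterating automorphisms but by a slicker device---a non-injective morphism of formed spaces $\id\oplus x_1\colon\HH^{\oplus g}\oplus(R,0)\to M$ that folds all of $M'$ onto a single isotropic vector $x_1$ (any vector is isotropic, the form being alternating). Pulling $l$ back along this morphism lands you precisely in the Step~1 situation, and pushing the resulting hyperbolic plane forward gives the summand of $M$. If you revise your write-up, I would drop the iterated transvection plan entirely in favour of either the one-shot absorption above or the paper's fold-to-rank-one morphism.
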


\begin{proof}
    We will proceed in two steps, the first one is a special case and the second one is to deduce the general case from it. Write $g:=g_{\HH}(M,\la)$. 

\smallskip

    \noindent
    \textbf{Step 1.} Suppose first that $(M,\la)=\HH^{\oplus g} \oplus (R,0)$. 
    Forgetting the form, we can identify $M=R^{2g+1}$ as an $R$-module. Denote $x_1,\dots,x_{2g+1}$ the standard basis vectors of $R^{2g+1}$, where the first $2g$ vectors span $\HH^{\oplus g}$ and $x_{2g+1}$ spans the summand $(R,0)$. 

    Since $l$ is unimodular then $(l(x_1),\dots,l(x_{2g}),l(x_{2g+1})) \in R^{2g+1}$ is a unimodular vector.
    By assumption $2g \ge sr(R)$, so by definition of the stable rank there exist $t_1,\dots,t_{2g} \in R$ such that $(l(x_1)+t_1 l(x_{2g+1}),\dots,l(x_{2g})+t_{2g}l(x_{2g+1})) \in R^{2g}$ is unimodular. 
    Thus, there exist $a_1,\dots,a_{2g} \in R$ such that $\sum_{i=1}^{2g}{a_i(l(x_i)+t_i l(x_{2g+1}))}=1$.  Note that the vector $(a_1,\dots,a_{2g}) \in R^{2g}$ is necessarily itself also unimodular. 

    Now consider $u=(u_0,u_1)\in R^{2g}\oplus R=M$, where $u_0=(\sum_{i=1}^{2g}{a_i x_i})$ and $u_1=(\sum_{i=1}^{2g} a_i t_i) x_{2g+1}$.
    By construction $l(u)=1$, 
    and $u_0$ is unimodular.  Let $v \in \HH^{\oplus g}$ be such that $\la(u_0,v)=1$. 
    Now we also have $\la(u,v)=1$ since $x_{2g+1}$ is orthogonal to $\HH^{\oplus g}$. 
    Then $H=\langle u,v \rangle \subset M$ is a hyperbolic summand by Lemma \ref{lem:splitH}(i), and it has the desired properties. 

    \smallskip

\noindent
    \textbf{Step 2.} Now consider a general formed space $(M,\la)$. Up to isomorphism, 
$(M,\la)=\HH^{\oplus g} \oplus (N,\la_N)$.
    Since $l$ is unimodular, then there is some $x \in M$ such that $l(x)=1$.  Write $x=x_0+x_1$ with $x_0 \in \HH^{\oplus g}$ and $x_1 \in N$. 
    Since $N$ is orthogonal to $\HH^{\oplus g}$ and any vector is isotropic, we get a (not necessarily injective) morphism in $\F$ 
    $$\phi= \id \oplus x_1: \HH^{\oplus g} \oplus (R,0) \rar  \HH^{\oplus g}\oplus (N,\la|_N)= (M,\la).$$
   Note that the pull-back map $l':=l \circ \phi$ is unimodular since $l'((x_0,1))=l(x_0+x_1)=1$ by construction. 
    Thus, by Step 1 there is a hyperbolic $H \subset \HH^{\oplus g} \oplus (R,0)$ such that $l'|_{H}$ is unimodular in $H^\vee$. 
    Hence, by Lemma \ref{lem:splitH}(i), $\phi(H) \subset (M,\la)$ is a hyperbolic summand and by construction $l|_{\phi(H)}$ is unimodular in $\phi(H)^\vee$, as required.    
\end{proof}

\smallskip

Since PIDs have finite unitary stable rank, all of the above applies to them. However, we will in the following treat the case of PIDs separately, because PIDs have special properties that allow to get slightly better results than for general rings. 
We summarize these properties in the following lemma: 

\begin{lem} \label{lem: pid} 
Let $R$ be a PID and $M$ a finitely generated free module. Then
    \begin{enumerate}[(i)]
        \item If $m \in M$ is non-zero 
        then $m= r \cdot m'$ where $r \in R$ and $m' \in M$ is unimodular. Moreover, $GL(M)$ acts transitively on the set of unimodular vectors in $M$.
        \item If $\la \in \Lambda^2 M^\vee$ is an alternating form on $M$ then $\la$ is isomorphic to a canonical form 
        \[\bigoplus_{i=1}^{k} d_i\!\HH \oplus (R^l,0)\]
        such that $d_1 | d_2 | \cdots | d_k$ are non-zero and unique up to multiplying by units of $R$, and $d \HH$ means $\Big(R^2, \begin{pmatrix}
            0 & d \\ -d & 0
        \end{pmatrix}\Big)$. 
        \item If $(M,\la) \in \F$ then $g_{\HH}(M,\la)$ is half the number of units in the Smith normal form of a matrix representing $\la$. 
    \end{enumerate}
\end{lem}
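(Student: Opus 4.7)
The plan is to deduce all three statements from the classical structure theory of finitely generated modules and alternating bilinear forms over a PID.

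For (i), I would pick any basis of $M$ and write $m=\sum_i m_i e_i$. Let $r=\gcd(m_1,\ldots,m_k)$, which exists because $R$ is a PID. Then $m=r\cdot m'$ with $m'$ having coprime coordinates, and B\'ezout produces $\phi\in M^\vee$ with $\phi(m')=1$, so $m'$ is unimodular. For the transitivity statement, the key classical fact is that over a PID any unimodular vector extends to a basis of $M$; this is shown by induction on $\rk(M)$, using coprimality of coordinates to perform elementary operations that reduce to a unimodular vector of lower rank. Given two unimodular vectors, any $GL(M)$ element sending one extending basis to another realizes the transitivity.

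For (ii), I would invoke the standard existence-and-uniqueness theorem for canonical forms of alternating forms over a PID, proved by induction on $\rk(M)$. The image ideal $\{\la(x,y):x,y\in M\}\subseteq R$ is principal, generated by some $d_1$. Choosing $m,n\in M$ with $\la(m,n)=d_1$, a minimality argument shows that $R\langle m,n\rangle$ is a rank-$2$ summand of $M$ on which $\la$ is $d_1\HH$, and that $d_1$ divides every value of $\la$ on $R\langle m,n\rangle^\perp$. Applying induction to the perpendicular complement gives the divisibility chain. Uniqueness up to units of the sequence $(d_1,\ldots,d_k)$ and of $l$ then follows from the uniqueness of the Smith normal form of any matrix representing $\la$.

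For (iii), I would use (ii) to write $(M,\la)\cong\bigoplus_{i=1}^k d_i\HH\oplus (R^l,0)$, and let $k_0$ be the number of $d_i$ that are units. Each summand $d_i\HH$ with $d_i$ a unit is isomorphic to $\HH$, which directly exhibits a splitting $\HH^{\oplus k_0}\oplus N$, so $g_\HH(M,\la)\geq k_0$. Conversely, if $\HH^{\oplus g}$ splits off $(M,\la)$ orthogonally with complement $N'$, applying (ii) to $N'$ and concatenating yields a canonical form for $(M,\la)$ with at least $g$ leading unit diagonals; by the uniqueness part of (ii) this forces $g\leq k_0$. Hence $g_\HH=k_0$. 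To conclude, observe that a block $d_i\HH$ has matrix $\bigl(\begin{smallmatrix}0&d_i\\-d_i&0\end{smallmatrix}\bigr)$, with Smith normal form $\mathrm{diag}(d_i,d_i)$; so the Smith normal form of a matrix of $\la$ is $\mathrm{diag}(d_1,d_1,\ldots,d_k,d_k,0,\ldots,0)$, whose number of units is exactly $2k_0=2g_\HH$.

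The main obstacle is the uniqueness part of (ii), which is essential to pin down $g_\HH$ in (iii): distinct orthogonal splittings of $(M,\la)$ could a priori yield different canonical forms, and one must invoke uniqueness of the Smith normal form of the matrix of $\la$ in any basis to compare them. Everything else is bookkeeping on top of classical structure results.
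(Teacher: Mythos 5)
Your proposal is correct and follows essentially the same route as the paper: (i) via gcd/B\'ezout plus extension of a unimodular vector to a basis (the paper phrases the transitivity through Smith normal form, but these are equivalent over a PID); (ii) by citing the standard classification of alternating forms over a PID (the paper cites Newman); and (iii) by matching $g_\HH$ to the count of unit elementary divisors using the uniqueness in (ii). Your (iii) spells out the two inequalities $g_\HH\ge k_0$ and $g_\HH\le k_0$ more explicitly than the paper's one-line remark, but the underlying mechanism---uniqueness of the Smith normal form of a matrix representing $\la$---is the same.
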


\begin{proof}
To prove (i), we can assume $M=R^n$ for some $n$. Pick standard basis $e_1,\dots,e_n$ and let $m=r_1 e_1+ \cdots+r_n e_n$, $r_i \in R$. 
 Then let $r \in R$ be such that there is an equality of ideals $(r_1,\dots,r_n)=(r)$ in $R$. We must have $r \neq 0$ since $m\neq 0$ and we can write $r_i=r r_i'$ for unique $r_i' \in R$. 
 Then let $m'=r_1' e_1+\cdots+r_n' e_n$. 
 By construction $r \in (r_1,\dots,r_n)$ so there are $a_1,\dots,a_n \in R$ such that $r=a_1 r_1+\cdots a_n r_n$, so $1=a_1 r_1'+ \cdots a_n r_n'$. 
 Thus, $m'$ is unimodular and $m=rm'$, proving the statement. 

 The moreover part follows by Smith normal form since we can assume without loss of generality that $M=R^n$.

 The proof of (ii) can be found in \cite[Theorem IV.I]{Newman}. It is stated for skew-symmetric forms in characteristic not 2, but the same proof works also for alternating forms in characteristic 2. 

 Part (iii) follows immediately from (ii) as any two matrices representing $\la$ will have the same Smith normal form, and the Smith Normal Form of the canonical form defined above is given by  $diag(d_1,d_1,\dots,d_k,d_k,0,\dots,0)$.
\end{proof}

\subsection{The hyperbolic genus and the arc genus II}\label{sec:genus II}

Our goal now is to relate the hyperbolic and arc genera and then get bounds on how much the arc genus can decrease when taking kernels of linear functionals. We will later define the operation of ``cutting arcs" in a given module, see Section \ref{sec: cutting arcs}, and these results will imply bounds on how the arc genus decreases when cutting arcs. Given a formed space with boundary $(M,\la,\del)$, we will control its arc genus using the hyperbolic genus of $(M,\la)$ and that of the kernel of $\del$. 
To begin with, we summarize some properties of hyperbolic forms that will be used for the rest of the paper.

 \begin{prop}\label{prop:properties hyperbolics} Let $R$ be a ring 
 and $(M,\la)$ a formed space of genus $g=g_{\HH}(M,\la)$. 
 
 \noindent
 Suppose that $g \ge usr(R)+1$ or $R$ is a PID. 
 \begin{enumerate}[(i)]
         \item If $f: \HH \rightarrow (M,\la)$ is a morphism in $\F$,  
         then $g_{\HH}(\im(f)^{\perp},\la|_{\im(f)^{\perp}})=g-1$. 
         \item The group $\Aut(\HH^{\oplus g})\cong\Sp_{2g}(R)$ acts transitively on the set of unimodular vectors of $R^{2g}$. 
         \item If $l \in M^\vee$ is unimodular, 
         then there is a hyperbolic basis $(e_1,f_1,\dots,e_g,f_g)$ in $M$ such that $l(e_1)=1$, $l(e_i)=0$ for $2 \le i \le g$, and $l(f_i)=0$ for all $i$. 
         Moreover, if $R$ is a PID, then for any $l \in \HH^\vee$, there is a hyperbolic basis $e,f$ such that $l(f)=0$.
         \item If $l \in M^{\vee}$ is unimodular, 
         then $g_{\HH}(\ker l, \la|_{\ker l}) \geq g-1$. 
         Moreover, if $R$ is a PID then the unimodularity condition can be dropped.  
     \end{enumerate}
 \end{prop}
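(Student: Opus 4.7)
The plan is to prove the four parts in the order (ii), (iii), (iv), (i), with each building on the previous. A useful underlying observation is that, since $(M,\la) \cong \HH^{\oplus g}$ is non-degenerate, the assignment $w \mapsto \la(-, w)$ is an isomorphism $M \xrightarrow{\cong} M^\vee$ of free modules; this isomorphism preserves unimodularity of elements, so $v \in M$ is unimodular iff the functional $l_v := \la(-, v) \in M^\vee$ is unimodular.

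For (ii), when $g \ge usr(R) + 1$ the definition of $usr(R)$ gives that $ESp_{2g}(R) \subset \Sp_{2g}(R)$ acts transitively on unimodular vectors of $R^{2g}$ as soon as $g - 1 \ge usr(R)$. For the PID case, given a unimodular $v \in M$, the functional $l_v$ is also unimodular by the observation above; since $v \in \ker l_v$ (as $\la$ is alternating) and $(\ker l_v)^\perp = \langle v \rangle$ (by non-degeneracy), $\la$ descends to a non-degenerate alternating form on the free rank $2g - 2$ quotient $\ker(l_v)/\langle v \rangle$, which is hyperbolic by Lemma \ref{lem: pid}(ii). Lifting a hyperbolic basis to $\ker l_v$ and choosing $w \in M$ with $\la(v, w) = 1$ (which exists since $l_v$ is unimodular, and which can be adjusted to be orthogonal to the chosen hyperbolic basis by the usual projection trick) produces a symplectic basis of $M$ with $v$ as its first vector, yielding the required $\sigma \in \Sp(M)$.

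For (iii), write $l = \la(-, w)$ with $w$ unimodular (by the observation), apply (ii) to find $\sigma \in \Sp(M)$ with $\sigma(f_1^0) = w$, and take $(e_i, f_i) := (\sigma(e_i^0), \sigma(f_i^0))$: then $l(e_1) = \la(\sigma(e_1^0), \sigma(f_1^0)) = 1$, and the remaining evaluations vanish analogously. The PID ``moreover'' for $\HH$ follows by writing $w = r m'$ for $m'$ unimodular via Lemma \ref{lem: pid}(i) and applying the unimodular case to $m'$. For (iv), the basis supplied by (iii) exhibits $\HH^{\oplus g-1} = \langle e_2, f_2, \dots, e_g, f_g \rangle$ as a sub-formed space of $\ker l$, so Lemma \ref{lem:splitH}(i) splits it off as a hyperbolic summand; the PID ``moreover'' reduces to this case by writing $l = r l'$ with $l'$ unimodular (using Lemma \ref{lem: pid}(i) on $M^\vee$) and observing $\ker l = \ker l'$ since PIDs are domains.

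For (i), Lemma \ref{lem:splitH}(i) gives the orthogonal decomposition $M = \im(f) \oplus \im(f)^\perp$, so $g_\HH(\im(f)^\perp) \le g - 1$. For the reverse inequality, fix a hyperbolic basis $(e', f')$ of $\im(f)$ and apply (iv) to the unimodular functional $l_{e'} := \la(-, e')$, giving $g_\HH(\ker l_{e'}) \ge g - 1$. A direct check shows $\ker l_{e'} = \langle e' \rangle \oplus \im(f)^\perp$ as an orthogonal direct sum with $\langle e' \rangle$ carrying trivial form; hence $\la|_{\ker l_{e'}}$ is the pullback of $\la|_{\im(f)^\perp}$ under the projection to the second factor, so any $\HH^{\oplus g-1}$ sitting inside $\ker l_{e'}$ projects to a system of vectors in $\im(f)^\perp$ satisfying the same hyperbolic relations. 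Lemma \ref{lem:splitH}(i) then upgrades this to a hyperbolic summand $\HH^{\oplus g-1} \subset \im(f)^\perp$. The only real obstacle is (ii) in the PID case, where the structure theorem for alternating forms (Lemma \ref{lem: pid}(ii)) must be combined with an explicit extension-of-basis argument; the other parts then follow cleanly from the earlier ones.
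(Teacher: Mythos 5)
The proof has a fundamental gap at the very first step, and it propagates through (iii), (iv), (i). Your ``useful underlying observation'' asserts that $(M,\la)\cong\HH^{\oplus g}$ and hence that $\la$ induces an isomorphism $M\xrightarrow{\cong}M^\vee$. But the hypothesis only says that $g_{\HH}(M,\la)=g$, i.e.~$(M,\la)\cong\HH^{\oplus g}\oplus(N,\la_N)$ for some formed space $N$ that may well be degenerate or have nonzero radical. (Indeed, the heart of the later applications, e.g.~Proposition~\ref{prop:formula for gX} and Corollary~\ref{cor R of a kernel}, is precisely this mixed situation with $N\neq 0$.) Consequently $\la^\vee:M\to M^\vee$ is in general neither surjective nor injective, and a unimodular $l\in M^\vee$ need \emph{not} be of the form $\la(-,w)$. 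A concrete counterexample: $M=\HH\oplus(R,0)$, $l$ the projection to the last factor. This $l$ is unimodular but vanishes on no $\la$-dual, because $\la(-,w)$ annihilates the radical $(R,0)$ for every $w$. This is exactly the case the proposition must cover.

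Because of this, your proof of (iii) --- the pivot of your whole scheme --- never gets off the ground for general $M$, and with it (iv) and (i) collapse. The missing ingredient is Proposition~\ref{prop: finiteness of usr*}: under the stable-rank hypothesis (or for PIDs), an arbitrary unimodular $l\in M^\vee$ restricts to a unimodular functional on \emph{some} hyperbolic summand $\HH\subset M$. The paper's proof of (iii) uses this to peel off one hyperbolic summand adapted to $l$, then invokes part (i) inductively together with transitivity from (ii); without Proposition~\ref{prop: finiteness of usr*} or an equivalent substitute there is no way to ``see'' $l$ inside a hyperbolic piece when $M$ has a nontrivial non-hyperbolic complement. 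Your argument for part (ii) in the PID case, via the quotient $\ker l_v/\langle v\rangle$ rather than via an orthogonal complement, is fine and is a mild variant of the paper's genus-$1$ reduction --- but there $M=\HH^{\oplus g}$ is genuinely non-degenerate by hypothesis, which is where your observation is actually true. Also note the paper's proof of (i) does not proceed through (iv): for general rings it uses cancellation of hyperbolic summands directly, and for PIDs it uses the Smith-normal-form characterization of $g_\HH$ from Lemma~\ref{lem: pid}(iii); your circular ordering (ii)$\Rightarrow$(iii)$\Rightarrow$(iv)$\Rightarrow$(i) would be a legitimate alternative structure \emph{if} step (iii) were repairable, but as written it is not.
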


 \begin{proof}
 \textbf{Part (i)}: Let us first consider the case that $R$ is not a PID.  By Lemma \ref{lem:splitH}(i) we have $(M,\la) \cong \HH \,\oplus \,(\im(f)^\perp, \la|_{\im(f)^\perp})$. Hence $g_{\HH}(M,\la)=g \ge 1+ g_{\HH}(\im(f)^\perp, \la|_{\im(f)^\perp})$. 
 For the reversed inequality, we have that $(M,\la)\cong \HH^{\oplus g} \oplus (N,\la_N)=\HH \oplus (\HH^{g-1} \oplus (N,\la_N))$. Given that $g \ge usr(R)+1$,  
 cancellation (\cite[Cor 3.14]{Fri17})
 gives that $(\im(f)^\perp, \la|_{\im(f)^\perp}) \cong \HH^{g-1} \oplus (N,\la_N)$, proving the result. 

 When $R$ is a PID, use again Lemma \ref{lem:splitH}(i) to write $(M,\la) \cong \HH \oplus (\im(f)^\perp, \la|_{\im(f)^\perp})$. The number of units in the Smith normal form of a matrix representing $\la$ is thus the number of units in the Smith normal form of a matrix representing $\la|_{\im(f)^\perp}$ plus $2$. 
 This gives the result by Lemma \ref{lem: pid}(iii).

\smallskip

 \noindent
\textbf{Part (ii)}: This follows directly from the definition of $usr(R)$ under the assumption $g\ge usr(R)+1$. When $R$ is a PID, we need to check that the result also holds for small values of $g$. 
Let $x \in \HH^{\oplus g}$ be a unimodular vector. 
Since $\la$ is unimodular, there must exist $y\in \HH^{\oplus g}$ such that $\la(x,y)=1$. Thus, $\langle x,y\rangle$ is a hyperbolic summand in $\HH^{\oplus g}$, and it follows from Lemma~\ref{lem: pid}(ii) that the orthogonal complement $\langle x,y\rangle^\perp$ is isomorphic to $\HH^{\oplus g-1}$. This reduces the computation to genus 1, where we have that $\Sp_2(R)=\operatorname{SL}_2(R)$ (since $R$ is commutative), from which transitivity follows by transitivity of the $GL_2(R)$-action, which can be shown by writing any unitary vector in coordinates as a matrix and using that its
Smith normal form is $(1,0)$.

\smallskip

 \noindent
\textbf{Part (iii)}: 
Assume first that $g=g_{\HH}(M,\la) \ge {usr(R)+1}$. Then $g \ge sr(R)+1 \ge \frac{sr(R)}{2}$, so we can find a hyperbolic summand $\HH \subset (M,\la)$ such that $l|_{\HH}$ is unimodular by Proposition \ref{prop: finiteness of usr*}, and write $(M,\la)=\HH \oplus (\tilde{M},\tilde{\la})$ by Lemma \ref{lem:splitH}(i). 
Now use part(i) to deduce that $g_{\HH}(\tilde{M},\tilde{\la})=g-1$ and thus we can write $(M,\la)=\HH^{\oplus g} \oplus (N,\la_N)$ such that $l|_{\HH^{\oplus g}}$ is surjective. 

Since $\la|_{\HH^{\oplus g}}$ is non-degenerate then there is $v \in \HH^{\oplus g}$ unimodular such that $l|_{\HH^{\oplus g}}=\la(v,-)|_{\HH^{\oplus g}}$. Let $(e_1,f_1,\dots,e_g,f_g)$ be a hyperbolic basis of $\HH^{\oplus g}$. 
By part (ii) we can apply an automorphism of $\HH^{\oplus g}$ to assume without loss of generality that $v=-f_1$, which gives the result. 

\smallskip

Now suppose that $R$ is a PID and consider first the case where $l\in \HH^\vee$ is any linear map.   
Pick a hyperbolic basis $e,f$ of $\HH$ and let $d \in R$ be the generator of the ideal $(l(e),l(f))$, i.e.~$(d)=\im(l) \subset R$. 
   If $d=0$ then we are done.  Otherwise, let $A, C \in R$ be such that $d= A l(e)+ C l(f)$. 
   Now consider the ideal $(A,C) \subset R$. We claim that $(A,C)=R$: let $r \in R$ be a generator for $(A,C)$ then $A=r A'$, $C= r C'$ for some $A', C' \in R$, so $d= r d'$ where $d'= A' l(e) + C'l(f) \in (l(e),l(f))=(d)$, so $d'= u d$ for some $u \in R$. 
   Thus, $d= r d'= r u d$, so $r u =1$ so $r$ is a unit, as required. 
   Therefore, there are $B, D \in R$ such that $A D - B C=1$. Hence there is a matrix $\begin{pmatrix}
       A & B \\ C & D
   \end{pmatrix} \in SL_2(R)=\Sp_2(R)$,  sending the hyperbolic basis $e, f$ to a new hyperbolic basis $e',f'$ such that $l(e')= d$ and $d | l(f')$. 
   Applying the symplectic transformation $f' \mapsto f'- (l(f')/ d) e'$ gives the last part of the statement. 

   Assume finally that $l\in M^\vee$ is unimodular with $R$ a PID, and let $(e',f')$ be a basis of a hyperbolic summand in $M$ with $l(f')=0$ as just obtained. Consider $M'=<e',f'>^\perp \subset M$. Since $l$ is unimodular, there is some $x \in M'$ such that $l(te'+x)=1$ for some $t \in R$. 
Let $f''=te'+x+f'$. Then $l(f'')=1$ and $\la(e',f'')=1$ so $H=\langle e',f''\rangle$ is a new hyperbolic summand of $M$ in which $l$ is surjective. Now we can proceed as in the non-PID case, using the results already proved in part (i) and (ii).

   \smallskip

\noindent
\textbf{Part (iv)}: 
In the non-PID case, this folows from part (iii) by considering $\langle e_2,f_2,\dots,e_g,f_g\rangle$.
In the case that $R$ is a PID,  
 write $(M,\la)=\HH^{\oplus g} \oplus (N,\la_N)$. 
Consider the restriction of $l$ to $\HH^{\oplus g}$. By non-degeneracy it can be written as $\la_{\HH^g}(v,-)$ for some $v \in \HH^{\oplus g}$. If $v=0$, the result is trivial. Otherwise, 
use Lemma \ref{lem: pid}(ii) to write $v=r v'$ for some $r \in R \setminus \{0\}$ and $v' \in \HH^{\oplus g}$ unimodular. 
Since $\langle v' \rangle^\perp \subset \HH^{\oplus g}$ is contained in $\ker l$, it suffices to prove that $g(\langle v' \rangle^\perp) \geq g-1$. 
This follows from part (ii) since we can assume $v'=e_1$.
 \end{proof}

Recall the formed space $X=(R,0,\id)$. 
Note that $g_X(\HH,0)=0$ for the simple reason that $(\HH,0)=(R^2,\la_{\HH},0)$ has no ``arc" vector with boundary 1. More generally, if we want to have $g_X(M,\la,\del)>0$ then we need $\del$ to be surjective, which in our context is the same as unimodular, provided that $M \ne 0$. 
In general, we have the following result relating $g_X$ to $g_{\HH}$:

 \begin{prop} \label{prop:formula for gX}
 Let $R$ be a ring and $(M,\la,\del)$ a formed space with boundary over $R$. 
 \begin{enumerate}[(a)]
     \item 
 $g_X(M,\la,\del) \leq 1+g_{\HH}(M,\la)+g_{\HH}(\ker\del,\la|_{\ker\del})$.\\
 In particular, if $g_X(M,\la,\del)\ge 2n$ then $g_{\HH}(M,\la,\del)\ge n$.  
 \item If ${usr(R)}<\infty$ and one of the following three conditions holds: 
 \begin{enumerate}[(i)]
  \item $\del$ is unimodular and $R$ is a PID.
     \item $\del$ is unimodular and $g_{\HH}(M,\la) \ge {usr(R)+1}$. 
     \item $g_X(M,\la,\del) \ge 2 usr(R)+2$. 
 \end{enumerate}
 then equality holds:
   $g_X(M,\la,\del)= 1+g_{\HH}(M,\la)+g_{\HH}(\ker\del,\la|_{\ker\del}).$
   \end{enumerate}
   \end{prop}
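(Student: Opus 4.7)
\textit{Proof proposal.} I treat parts (a) and (b) separately.

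For part (a), let $m=g_X(M)$ and write $M\cong N\# X^{\# m}$ in $\Fd$. Proposition~\ref{prop:X^n} decomposes $X^{\# m}$ explicitly, and one reads off directly that the underlying formed space of $X^{\# m}$ contains a split sub-$\HH^{\oplus\lfloor m/2\rfloor}$, and that $(\ker\del_{X^{\# m}},\la_{X^{\# m}}|_{\ker\del_{X^{\# m}}})$ contains $\HH^{\oplus\lceil m/2\rceil-1}$. Because $\la_1\#\la_2$ restricts to $\la_2$ on the second factor, and the cross term in $\la_M$ vanishes on elements whose $N$-component is zero, these hyperbolic summands embed as sub-formed-spaces of $(M,\la)$ and of $(\ker\del_M,\la|_{\ker\del_M})$ respectively. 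Summing gives $g_\HH(M)+g_\HH(\ker\del)\ge m-1$, proving (a); the ``in particular'' follows by integrality.

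For part (b), each of (i)--(iii) implies both that $\del$ is unimodular and that either $R$ is a PID or $g:=g_\HH(M)\ge usr(R)+1$ (for (iii), $g_X\ge 2$ forces $\del$ surjective via a split $X$-factor, and the ``in particular'' of (a) gives $g\ge usr(R)+1$). Proposition~\ref{prop:properties hyperbolics}(iv) then yields $g':=g_\HH(\ker\del)\ge g-1$, and since $g'\le g$ is automatic we have $g'\in\{g-1,g\}$. In Case A ($g'=g-1$), I apply Proposition~\ref{prop:properties hyperbolics}(iii) with $l=\del$ to obtain a hyperbolic basis $(e_1,f_1,\dots,e_g,f_g)$ of a sub-$\HH^{\oplus g}=:H\subset M$ with $\del(e_1)=1$ and $\del$ vanishing elsewhere; a basis change identifies $(H,\la|_H,\del|_H)\cong X^{\# 2g}$ as $\Fd$-objects (matching Proposition~\ref{prop:X^n}), so Lemma~\ref{lem:splitH}(ii) gives $g_X(M)\ge 2g=1+g+g'$.

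In Case B ($g'=g$), I instead embed $\HH^{\oplus g}\hookrightarrow(\ker\del,\la|_{\ker\del})$ in $\F$ and compose with the inclusion into $M$ to obtain an $\Fd$-morphism $(\HH^{\oplus g},0)\hookrightarrow M$. Lemma~\ref{lem:splitH}(i) provides an orthogonal splitting $(M,\la)=\HH^{\oplus g}\oplus M''$; since the first factor has $\del=0$, the operation $\#$ reduces to $\oplus$, so this upgrades to an $\Fd$-isomorphism $(M,\la,\del)\cong(\HH^{\oplus g},0)\# N$ with $N=(M'',\la|_{M''},\del|_{M''})$. The trivial bound $g_\HH(A\oplus B)\ge g_\HH(A)+g_\HH(B)$, together with $g_\HH(M)=g_\HH(\ker\del_M)=g$ and the resulting decomposition $\ker\del_M=\HH^{\oplus g}\oplus\ker\del_N$, forces $g_\HH(N)=g_\HH(\ker\del_N)=0$; moreover $\del_N$ is unimodular because $\del_M$ is and $\del|_{\HH^{\oplus g}}=0$. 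Choosing $n\in N$ with $\del_N(n)=1$, the map $r\mapsto rn$ is an $\Fd$-morphism $X\hookrightarrow N$, so Lemma~\ref{lem:splitH}(ii) gives $N\cong\tilde N\# X$; combining with the isomorphism $(\HH^{\oplus g},0)\# X\cong X^{\# 2g+1}$ from Proposition~\ref{prop:X^n} yields $M\cong\tilde N\# X^{\# 2g+1}$, and hence $g_X(M)\ge 2g+1=1+g+g'$.

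The main subtlety is that applying Proposition~\ref{prop:properties hyperbolics}(iii) directly to $M$ always yields only $X^{\# 2g}$, so in Case B the extra copy of $X$ must instead be produced indirectly, by first splitting off the ``extra'' hyperbolic summand from $\ker\del$ and then pulling off the final $X$ using the surviving unimodular boundary $\del_N$.
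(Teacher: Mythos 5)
Your proposal is correct and follows essentially the same route as the paper: part (a) reads off the hyperbolic contributions of $X^{\#m}$ to the underlying form and to $\ker\del$ from Proposition~\ref{prop:X^n}, and part (b) uses Proposition~\ref{prop:properties hyperbolics}(iv) to reduce to the dichotomy $g_\HH(\ker\del)\in\{g_\HH(M)-1,g_\HH(M)\}$, splitting off $X^{\#2g}$ or $(\HH^{\oplus g},0)\#X\cong X^{\#2g+1}$ accordingly. The only cosmetic difference is in Case~B, where the paper builds the single morphism $\phi\oplus x:(\HH^{\oplus g},0)\#X\to M$ at once, whereas you first split $M\cong(\HH^{\oplus g},0)\#N$ and then split an $X$ off $N$; these are equivalent, and your intermediate observation that $g_\HH(N)=g_\HH(\ker\del_N)=0$ is not actually needed.
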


   \begin{proof}
   We first show that $g_X(M,\la,\del) \leq 1+g_{\HH}(M,\la)+g_{\HH}(\ker\del,\la|_{\ker\del})$ always holds. 
   
   Pick a morphism $X^{\#n} \xrightarrow{f} (M,\la,\del)$ with $n=g_X(M,\la,\del)$. 
   If $n=2g$, Proposition~\ref{prop:X^n} gives that the underlying form on $X^{\# n}$ is $\HH^{\oplus g}$, and we can pick a hyperbolic basis in $\HH^{\oplus g}$ such that $\del=\la(e_1,-)$ so $\ker \del \supset 0 \oplus \HH^{g-1}$ in this basis. Using $f$, we then get that $g_{\HH}(M,\la) \geq g$ and $g_{\HH}(\ker \del, \la|_{\ker \del}) \geq g-1$. 
   If $n=2g+1$, Proposition~\ref{prop:X^n} gives that $f$ restricts to a morphism of formed spaces $(\HH^{\oplus g},0) \rightarrow (M,\la,\del)$. 
   Thus, $g_{\HH}(M,\la) \geq g_{\HH}(\ker\del,\la|_{\ker\del}) \geq g$ and the inequality also follows. Finally, from the inequality, it follows that $g_{\HH}(M,\la)\le n-1$ implies that $g_{X}(M,\la,\del)\le 2(n-1)+1=2n-1$, giving the second part of (a). 

   \smallskip
   
   Now we will show the reverse inequality $g_X(M,\la,\del) \geq 1+g_{\HH}(M,\la)+g_{\HH}(\ker\del,\la|_{\ker\del})$ whenever $\del$ is unimodular and $g_{\HH}(M,\la) \ge {usr(R)+1}$ or $R$ is a PID, that is (b) assumption (i) or (ii). (Note that when $\del$ is not unimodular we trivially have $g_{X}(M,\la,\del)=0$ and the inequality does not hold.) 
   
   By Proposition~\ref{prop:properties hyperbolics} (iv), $g_{\HH}(\ker\del,\la|_{\ker\del})\ge g_{\HH}(M,\la)-1$. On the other hand,  $g_{\HH}(\ker\del,\la|_{\ker\del})\le g_{\HH}(M,\la)$.   
   Hence there are two cases: when $M$ and $\ker\del$ have the same hyperbolic genus, and when they differ by one. 

   If $g_{\HH}(\ker\del,\la|_{\ker\del})=g_{\HH}(M,\la)=g$ then pick a morphism of forms $\HH^{\oplus g} \xrightarrow{\phi} (\ker\del,\la|_{\ker\del})$. 
  By Lemma~\ref{lem:splitH}(i), there is a splitting $M=\im(\phi) \oplus \im(\phi)^{\perp}$. 
   By construction $\im(\phi) \subset \ker\del$ and by assumption $\del$ is surjective, so $\del|_{\im(\phi)^{\perp}}$ must be surjective too. 
   Pick $x \in \im(\phi)^{\perp}$ with $\del(x)=1$, then $\phi \oplus x: (\HH^{\oplus g},0) \# X \rightarrow (M,\la,\del)$ is a morphism, and the source is isomorphic to $X^{\# 2g+1}$ by Proposition~\ref{prop:X^n}, giving the result by Lemma \ref{lem:splitH}(ii). 

   If $g_{\HH}(\ker\del,\la|_{\ker\del})=g_{\HH}(M,\la)-1$: let $g:= g_{\HH}(M,\la)$.  
   By Proposition \ref{prop:properties hyperbolics}(iii)  with $l=\del$ there is a morphism $\phi:\HH^{\oplus g} \to (M,\la)$ such that $\del(\phi(e_1))=1$ and $\del(\phi(f_1))=\del(\phi(e_2))=\dots=\del(\phi(f_g))=0$.
   In particular, $\phi$ gives a morphism $\phi: (\HH^{\oplus g},\del \circ \phi) \rightarrow (M,\la,\del)$ and by construction $\del \circ \phi$ is surjective, hence unimodular. 
   By Proposition~\ref{prop:X^n} and Proposition \ref{prop:properties hyperbolics}(ii) we have $(\HH^{\oplus g},\del \circ \phi) \cong X^{\# 2g}$, giving the required result by Lemma \ref{lem:splitH}(ii). This proves (b) under assumptions (i) or (ii). 

   Finally, if we assume that $g_X(M,\la,\del) \ge 2usr(R)+2$ then $\del$ is automatically surjective 
   and hence unimodular; and $g_{\HH}(M,\la) \ge {usr(R)+1}$ by (a), which finishes the proof by the previous part. 
   \end{proof}

The following corollary will give us in the next section an estimate of the arc-genus of forms obtained after ``cutting arcs". 

   \begin{cor} \label{cor R of a kernel}
      If $R$ has ${usr(R)+1}<\infty$, $M$ has $g_X(M,\la,\del) \ge 2{usr(R)+4}$, and  $l \in M^{\vee}$ satisfies that $\{l,\del\} \subset M^{\vee}$ is unimodular.  Then $$g_X(\ker l, \la|_{\ker l}, \del|_{\ker l}) \geq g_X(M,\la,\del)-2.$$ 
      Moreover, if $R$ is a PID then no lower bound on the arc genus $g_X(M,\la,\del)$ is needed. 
   \end{cor}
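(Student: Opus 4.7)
The plan is to reduce the statement to the formula $g_X = 1 + g_\HH(M) + g_\HH(\ker \del)$ provided by Proposition~\ref{prop:formula for gX}, applied both to $M$ and to $\ker l$, and then to control the two hyperbolic genera separately using Proposition~\ref{prop:properties hyperbolics}(iv).

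First I would observe that both $\del$ and $\del|_{\ker l}$ are unimodular, and similarly that both $l$ and $l|_{\ker \del}$ are unimodular. Indeed, since $\{l,\del\} \subset M^\vee$ is unimodular, there exist vectors $x,y \in M$ with $l(x)=1=\del(y)$ and $\del(x)=0=l(y)$; then $x \in \ker \del$ witnesses that $l|_{\ker \del}$ is surjective, and $y \in \ker l$ witnesses that $\del|_{\ker l}$ is surjective.

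Next I would apply Proposition~\ref{prop:formula for gX}(b)(iii) to $M$ (whose hypothesis $g_X(M,\la,\del) \geq 2 usr(R)+2$ holds by assumption) to get
\[g_X(M,\la,\del) = 1 + g_\HH(M,\la) + g_\HH(\ker\del, \la|_{\ker\del}).\]
Combined with part (a) of that proposition, this forces $g_\HH(M,\la) \geq usr(R)+2$ and $g_\HH(\ker\del, \la|_{\ker\del}) \geq usr(R)+1$. Then applying Proposition~\ref{prop:properties hyperbolics}(iv) to the unimodular functionals $l \in M^\vee$ and $l|_{\ker \del} \in (\ker \del)^\vee$ yields
\[g_\HH(\ker l, \la|_{\ker l}) \geq g_\HH(M,\la)-1 \quad\text{and}\quad g_\HH(\ker l \cap \ker \del, \la|_{\ker l \cap \ker \del}) \geq g_\HH(\ker\del, \la|_{\ker\del})-1,\]
and in particular $g_\HH(\ker l, \la|_{\ker l}) \geq usr(R)+1$, which is precisely the hypothesis needed to apply Proposition~\ref{prop:formula for gX}(b)(ii) to the triple $(\ker l, \la|_{\ker l}, \del|_{\ker l})$ (noting $\ker(\del|_{\ker l}) = \ker l \cap \ker \del$). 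The desired inequality then follows by subtracting.

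The PID case is handled by the same argument, now invoking Proposition~\ref{prop:formula for gX}(b)(i) and the ``moreover'' clause of Proposition~\ref{prop:properties hyperbolics}(iv), neither of which requires any lower bound on the arc or hyperbolic genus. The main technical point in both cases is verifying that the required unimodularity hypotheses (on $\del$, on $\del|_{\ker l}$, on $l$, and on $l|_{\ker \del}$) all follow from the single unimodularity hypothesis on the pair $\{l,\del\}$; once this is in place, the two applications of the genus formula slot together cleanly to give the loss of exactly~$2$.
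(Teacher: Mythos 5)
Your proof is correct and follows essentially the same route as the paper: apply the genus formula of Proposition~\ref{prop:formula for gX}(b) to both $M$ and $\ker l$, control the two hyperbolic genera via Proposition~\ref{prop:properties hyperbolics}(iv), and subtract. If anything you are slightly more explicit than the paper in spelling out all four unimodularity checks (for $\del$, $\del|_{\ker l}$, $l$, and $l|_{\ker\del}$), which is a mild improvement in readability.
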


   \begin{proof}
  To prove the result, we will apply Proposition~\ref{prop:formula for gX} to both $M$ and its submodule $\ker l$. 
       
       Note first that the unimodularity condition implies that $\del$ and $\del|_{\ker l} \in (\ker l)^{\vee}$ are unimodular. If $R$ is a PID, assumption (b)(i) then holds for both $M$ and $\ker(l)$. 
       
       In the non-PID case, we check that both modules have large enough hyperbolic genus, checking assumption (b)(ii) instead.  
       Given that $g_{\HH}(M,\la)\ge g_{\HH}(\ker\del,\la|_{\ker \del})$, we have that 
        $$ 1+ 2 g_{\HH}(M,\la)\ge 1+ g_{\HH}(M,\la)+g_{\HH}(\ker \del, \la|_{\ker \del})\ge g_X(M,\la,\del)\ge 2{usr(R)}+4,$$
        where we used Proposition \ref{prop:formula for gX}(a) for $M$ for the second inequality (that holds with no assumption) and our assumption here for the last inequality. 
       This is only possible if $g_{\HH}(M,\la) \ge {usr(R)+2}$, as it has to be an integer. 
       Since $l \in M^\vee$ is unimodular, Proposition \ref{prop:properties hyperbolics}(iv) now gives that 
       \begin{equation}\label{eq:kerl}
    g_{\HH}(\ker l, \la|_{\ker l}) \ge g_{\HH}(M,\la) -1 \ge {usr(R)+1}.
       \end{equation}

       Therefore, we can apply Proposition \ref{prop:formula for gX}(b) to both $(M,\la,\del)$ and $(\ker l,\la|_{\ker l}, \del|_{\ker l})$, in both the PID and non-PID case, to get 
       \begin{align*}g_X(M, \la, \del) & = 1 + g_{\HH}(M, \la) + g_{\HH}(\ker \del, \la|_{\ker \del}) \\
       & \le 2+ g_{\HH}(\ker l, \la|_{\ker l}) + g_{\HH}(\ker \del, \la|_{\ker \del}) 
       \end{align*}
       using again \eqref{eq:kerl}, and
       \begin{align*}
         g_X(\ker l, \la|_{\ker l}, \del|_{\ker l}) & = 1 + g_{\HH}(\ker l, \la|_{\ker l}) + g_{\HH}(\ker l \cap \ker \del, \la|_{\ker l \cap \ker \del})\\
         & \ge g_{\HH}(\ker l, \la|_{\ker l}) + g_{\HH}(\ker \del, \la|_{\ker \del}).  
       \end{align*}
       where we applied Proposition \ref{prop:properties hyperbolics}(iv) to 
       $l|_{\ker \del}$ to get the inequality 
        $$g_{\HH}(\ker l \cap \ker \del, \la|_{\ker l \cap \ker \del}) \geq g_{\HH}(\ker \del, \la|_{\ker \del})-1.$$ 
       Combining the above two inequalities gives the result.
   \end{proof}

\section{Arc complexes} \label{sec: complexes}

In this section, we define algebraic analogues of arc complexes and study their connectivity. Our goal is to define an algebraic version of the disordered arc complex of \cite{HVW} and show that it is highly connected. The proof of connectivity will  
be parallel to the proof in the geometric case, using a complex of ``non-separating arcs" along the way. We will see that algebraic arc complexes are closely related to posets of unimodular vectors, classically used to study linear groups. And in Section~\ref{sec:cancell}, we will show that the algebraic disordered arc complex identifies with the destabilization complex for stabilization by $X$, in the sense of \cite{RWW}. 

We will work with three types of simplicial objects:  simplicial complexes, semi-simplicial sets and nerves of posets. We start with a short section detailing how connectivity results can be passed between such types of objects in a specific situation. This will allow us to combine existing results proved in the context of posets, proof techniques developed in the context of simplicial complexes, while also using vertex ordering properties of semi-simplicial sets. 

\subsection{Simplicial complexes, semi-simplicial sets and posets}

Recall that a simplicial complex $\mathcal{S}$ has a set of vertices, and $p$-simplices are given as unordered collections of $p+1$ distinct vertices. On the other hand, in a semi-simplicial set $X=X_\bullet$, there is a set $X_p$ of $p$-simplices for every $p$, and face maps $d_i:X_p\to X_{p-1}$ for each $i=0,\dots,p$. Applying face maps repeatedly associates to any $p$-simplex $\s\in X_p$ an ordered collection of $p+1$ not necessarily distinct vertices in $X_0$. And this ordered collection of vertices need not determine $\s$. 

Any simplicial complex $\mathcal{S}$ can be given a semi-simplicial set structure by picking a total ordering of its set of vertices, defining the face map $d_i:\mathcal{S}_p\to \mathcal{S}_{p-1}$ to be the map forgetting the $i$th vertex. 
To $\mathcal{S}$, we can also associate the semi-simplicial set $\mathcal{S}^{ord}$, with the same vertices as $\mathcal{S}$ and with a $p$-simplex for each ordered sequence $(a_0,\dots,a_p)$ of vertices of  $p$-simplices $\{a_0,\dots,a_p\}$  of $\mathcal{S}$. So $\mathcal{S}^{ord}$ has $(p+1)!$ simplices for every $p$-simplex of $\mathcal{S}$. 

To $\cS$, we can likewise associate two posets: the poset $s\cS$ of simplices of $\cS$, oredered by inclusion, and 
 the poset $P(\mathcal{S})$ of  sequences $(a_0,\dots,a_p)$ of vertices of $\mathcal{S}$ with the property that $\{a_0,\dots,a_p\}$ is a $p$-simplex of $\mathcal{S}$, ordered by refinement. That is, for every $p$-simplex of $\cS$, there is one element in the poset $s\cS$ and $(p+1)!$ elements in the poset $P(\cS)$. 

\begin{lem}
    Let $\mathcal{S}$ be a simplicial complex, with its associated semi-simplicial set $S^{ord}$ and poset $P(\mathcal{S})$ as above. Then $P(\mathcal{S})\cong s \cS^{ord}$ is isomorphic to the poset of simplices of $S^{ord}$. Moreover, if $\s=\{a_0,\dots,a_p\}$ is a $p$-simplex of $\mathcal{S}$, and $P(\mathcal{S})_{a_0,\dots,a_p}$ denotes the subposet of sequences in $P(\mathcal{S})$ that end with $a_0,\dots,a_p$, then $$P(\mathcal{S})_{a_0,\dots,a_p}\cong s\Big(\big(\link_{\mathcal{S}}(\s)\big)^{ord}\Big)$$
    is isomorphic to the poset of simplices of the order complex of the link  of $\s=\{a_0,\dots,a_p\}$ in $\mathcal{S}$. 
\end{lem}

\begin{proof}
The statement follow directly from the definitions. 
\end{proof}

The geometric realization of a simplicial complex or semi-simplicial set has a copy of the standard $p$-simplex $\Delta^p$ for each $p$-simplex, glued along face inclusion, and the geometric realization of a poset has a $p$-simplex $\Delta^p$ for each length $p$ chain $a_0<\dots<a_p$ in the poset. When we talk about connectivity properties of such objects, we will always mean the connectivity properties of their geometric realization. 
Recall that the geometric realization of the poset of simplices $s\cS$ identifies with the barycentric subdivision of $\cS$. In particular, $\cS$ and $s\cS$ have the same connectivity.

\medskip

A simplicial complex $\mathcal{S}$ is called {\em weakly Cohen-Macaulay} (or {\em wCM} for short) {\em of dimension $n$} if it is $(n-1)$-connected, and the link of any $p$-simplex $\s$ in $\mathcal{S}$ is at least $(n-p-2)$-connected. 
Proposition 2.14 of \cite{RWW} states that when $\cS$ is wCM of dimension $n$, then $\cS^{ord}$ is necessarily at least $(n-1)$-connected. Building on this result, the following proposition shows that, under the wCM property, the connectivity properties of $\mathcal{S}$, $\mathcal{S}^{ord}$ and $P(\mathcal{S})$ are closely related: 

\begin{prop}\label{prop:SCSS}
    Let $\mathcal{S}$, $\mathcal{S}^{ord}$ and $P(\mathcal{S})$ be as above. Then the following are equivalent: 
    \begin{enumerate}[(i)]
        \item $\mathcal{S}$ is wCM of dimension $n$;
        \item $\mathcal{S}^{ord}$ is $(n-1)$-connected and $\big(\link_{\mathcal{S}}(\{a_0,\dots,a_p\})\big)^{ord}$ is $(n-p-2)$-connected for every $p$-simplex $\{a_0,\dots,a_p\}$  of $\mathcal{S}$; 
         \item $P(\mathcal{S})$ is $(n-1)$-connected and $P(\mathcal{S})_{a_0,\dots,a_p}$ is $(n-p-2)$-connected for every $p$-simplex $(a_0,\dots,a_p)$  of $\mathcal{S}^{ord}$.
    \end{enumerate}
\end{prop}

\begin{proof}
   First note that (ii) and (iii) are equivalent  since we have  $P(\mathcal{S})\cong s\mathcal{S}^{ord}$ and $P(\mathcal{S})_{a_0,\dots,a_p}\cong s\link_{\mathcal{S}}(\{a_0,\dots,a_p\})^{ord}$ by the lemma, and since the realization of poset of simplices of a simplicial object identifies with the barycentric subdivision of the realization, and hence are homeomorphic. 

   We will show that (i) and (ii) are equivalent. Assuming (i), \cite[Prop 2.14]{RWW} gives that $
   \mathcal{S}^{ord}$ is $(n-1)$-connected. Now note that, if $\s,\tau$ are simplices in $\cS$ with $\tau\in \link_{\mathcal{S}}(\s)$, then $\link_{\link(\s)}(\tau)\cong \link_{\mathcal{S}}(\tau *\s)$. It follows that if $\mathcal{S}$ is wCM of dimension $n$, then the link of any $p$-simplex $\s$ in $\mathcal{S}$ is wCM of dimension $n-p-1$. Indeed, if $\tau$ is a $q$-simplex, then $\tau*\s$ is a $(p+q+1)$-simplex, and the assumption on $\cS$ implies that $\link_{\link(\s)}(\tau)$ is $(n-(p+q+1)-2)=((n-p-1)-q-2)$-connected, as needed. So we can also apply \cite[Prop 2.14]{RWW} to $\link_{\mathcal{S}}(\{a_0,\dots,a_p\})$ to deduce that $\link_{\mathcal{S}}(\{a_0,\dots,a_p\})^{ord}$ is $(n-p-2)$-connected, showing that (i) implies (ii).  
   
   Assume now that (ii) holds. Forgetting the orderings induces a map on realizations $|\mathcal{S}^{ord}|\to |\mathcal{S}|$, and picking a total ordering of the vertices of $\mathcal{S}$ defines a section of that map. It follows that $\mathcal{S}$ is at least as highly connected as $\mathcal{S}^{ord}$. Likewise,  $\link_{\mathcal{S}}(\s)$ is at least as highly connected as $\link_{\mathcal{S}}(\s)^{ord}$. Hence (ii) implies that $\mathcal{S}$ is wCM of dimension $n$. 
\end{proof}


Finally, we give an adaptation of the simplicial approximation result for simplicial complexes, to semi-simplicial sets of the form $\cS^{ord}$:

\begin{prop}\label{prop:approx}
    Let $S$ be a simplicial complex and $Y\subset \cS^{ord}$ a sub-semi-simplicial set of the order complex of $S$. 
    Let $K$, $L$ be finite simplicial complexes,
with $L$ a subcomplex of $K$. Let $f : |K|\to |\cS^{ord}|$ be a continuous map such that the restriction $f|_L$
is a semi-simplicial map from $L$ to $Y$ for a semi-simplicial set structure on $L$. Then there exists a relative subdivision $(K_r,L)$ of $(K,L)$, a semi-simplicial set structure on $K_r$ extending that of $L$ and
a semi-simplicial map $g : K_r\to \cS^{ord}$ such that $g|_L = f|_L$ and $g$ is homotopic to $f$ keeping $L$ fixed.
\end{prop}

\begin{proof}
Consider $(\hat f, f):(|K|,|L|)\to (|\cS|^{ord},|Y|)\cong (|s \cS^{ord}|,|s Y|)$, with $s\cS^{ord}$  and $sY$ denoting the poset of simplices, or equivalently barycentric subdivisions, of $\cS^{ord}$ and $Y$. The barycentric subdivision of any simplicial object has the property that any $p$-simplices is determined by its $p+1$ (distinct) vertices. Hence, forgetting the ordering of the vertices, it can be considered as a simplicial complex, and we can use PL approximation of \cite{zeeman} for simplicial complexes. This gives a subdivision $K_r$ of $K$  (relative to $L$) and a simplicial map 
$$\hat f':K_r\rar s \cS^{ord}$$
restricting to $f$ on $L$ and such that $(\hat f',f)\simeq (\hat f,f)$. 

Consider the forgetful map $u:s \cS^{ord}\to \cS^{ord}$ that takes a vertex of $s \cS^{ord}$, that is a simplex $(a_0,\dots,a_p)$ of $\cS^{ord}$, to the last vertex $a_p$. This defines a simplicial map since a simplex of $s \cS^{ord}$ is a chain of simplices of $\cS^{ord}$, and the last vertices of such a chain necessarily define together a simplex of $\cS$ (of possibly smaller dimension). Moreover, using linear interpolation in the simplices, one can check that, on realizations and after identifying $|s\cS^{ord}|\cong |\cS^{ord}|$, this map is homotopic to the identity.  

Finally we can define a semi-simplicial set structure on $K_r$ extending that of $L$ as follows: Let $\s=\{v_0,\dots,v_q\}$ be a $q$-simplex of $K_r$. Because any $p$-simplex of $\cS^{ord}$ has $p+1$ distinct vertices, the image of $\s$ under the composed map 
$$g: K_r\xrightarrow{\hat f'} s \cS^{ord}\xrightarrow{\ u\ } \cS^{ord}$$ is determined by the collection of images $\{g(v_{0}),\dots,g(v_{q})\}$, that forms a $p$-simplex of $\cS$ for some $p\le q$, together with the map $\theta_\s:\{v_0,\dots,v_q\}\to \{0,\dots,p\}$ that records where the vertices are mapped. We need to pick, for each such simplex $\s$, an ordering of its vertices that makes $\theta_\s$ order preserving. We start by picking, arbitrarily extending that of $L$, for each vertex $v$ of $\cS^{ord}$, an ordering of the vertices of the subcomplex of $K_r$ of simplices mapped to $v$. Now for a general simplex $\s$ with $g(\s)=(v_0,\dots,v_p)$, we can write $\s=\s_{v_0}*\dots*\s_{v_p}$ where $\s_{v_i}$ is the face of $\s$ mapped to $v_i$. We order the vertices of $\s$ using on each $\s_{v_i}$ the ordering already chosen in the previous step, extending it to the total ordering induced by the ordering of the vertices $v_i$.  
\end{proof}

\subsection{Complexes of non-separating arcs}

Let $(M,\la,\del)$ be a formed space with boundary.
Recall that a collection $v_1,\dots,v_n\in M$ is unimodular if it generates a split $R^n$--summand. 
In what follows, we will write $\la(a,b)=a\bullet b$ for $\la$ the form in the formed space $a$ and $b$ are considered in.

\begin{Def}\label{def:arc}
    Let $M=(M,\la,\del)$ be a formed space with boundary. 
    \begin{enumerate}[(i)]
    \item  An {\em arc} in $M$ is an element $a\in M$ such that $\del(a)=1$.
        \item An arc $a$ is {\em non-separating} if $\{a\bullet -, \del\}$ is unimodular in $M^\vee=\operatorname{Hom}(M,R)$. 
    \end{enumerate}
\end{Def}

Note that an arc is automatically unimodular in $M$ since we can split $M=\ker \del \oplus R\langle a \rangle$ as $R$-modules. 
On the other hand, $a\bullet -$ is not necessarily unimodular in $M^\vee$; in fact it can be the zero map, see e.g.~the arc $x-y+z$ in $X^{\# 3}$ in the proof of Proposition~\ref{prop:X^n}. 
If $\del$ is assumed surjective, it is by definition unimodular in $M^\vee$, so the non-separating condition is equivalent to requiring that $a\bullet -$ is unimodular in $(\ker\del)^\vee$. 

If $a$ is a geometric arc in a bidecorated  surface $(S,I_0,I_1)$ going from $I_0$ to $I_1$, then its homology class $[a]$ defines an arc in the above sense in the associated formed space with boundary $(M,\la,\del)$ described in Section \ref{sec:surfaces}. And if $a$ is a non-separating arc, then that precisely means that there exists a curve $c$ in $S$ ``connecting both sides of $a$'', i.e. an element $c$ with $\del c=0$ and $a\bullet c=1$. So a geometric non-separating arc in a bidecorated  surface will likewise define an algebraic non-separating arc. 

\begin{Def}
     \begin{enumerate}[(i)]
\item The {\em algebraic arc complex} $\Aa^{alg}(M,\del)$ is the simplicial complex whose vertices are arcs in $M$, and where a collection of arcs $\{a_0,\dots,a_p\}$ defines a $p$--simplex if the vectors together form a unimodular sequence.
    \item The {\em algebraic non-separating arc complex} $\B(M,\la,\del)$ has vertices non-separating arcs, and 
a collection of arcs $\{a_0,\dots,a_p\}$ forms a $p$-simplex if  $\{a_0\bullet -,\dots,a_p\bullet -,\del\}$ is unimodular in $M^\vee$, i.e.~the arcs are jointly non-separating.
\end{enumerate}
\end{Def} 

As indicated above, the last condition has a geometric meaning when the arcs come from (pairwise disjoint) geometric arcs in a surface. It actually precisely corresponds to being non-separating by a Mayer-Vietoris argument. 
It can equivalently be stated as: $\{a_0\bullet -,\dots,a_p\bullet -\}$ is unimodular in $(\ker \del)^\vee$, provided $\del$ is itself surjective. 
Observe also that the condition in (ii) implies that of (i) so there is an inclusion $\B(M,\la,\del) \subset \Aa^{alg}(M,\del)$. 

Our first goal in this section is to prove a connectivity result for $\B(M,\la,\del)$. It will folllow from a  result about certain posets of unimodular sequences, as studied in particular by van der Kallen, Maazen, Charney, see e.g.~\cite{vdKL,Maazen,Cha84}. We will not actually need the connectivity of $\Aa^{alg}(M,\la)$, but it will be a direct consequence of these other results. We will work with the following posets:

\begin{Def}
    Let $\Uu(M)$ denote the poset of ordered non-empty unimodular sequences in $M$. For $N\le M$ a submodule and $m\in M$ an element, we define $\Uu(M,m+N)$ to be the subposet of $\Uu(M)$ of unimodular sequences in $M$ of elements of the form $m+n$ with $n\in N$.
\end{Def}

Our first connectivity results will be formulated in terms of the stable rank $sr(R)$ of Definition~\ref{def:sr} and following relative rank  $$r(M,N):= \max\{\rk(L): L \subset N \text{ is a direct summand of } M\}.$$ 

\begin{thm}\label{connectivity of U}
Let $M$ be a finitely generated free module.
The poset $\Uu(M,m+N)$ is $(r(M,N)-sr(R)-1)$--connected, and for $v_1,\dots,v_k\in \Uu(M,m+N)$, the subposet $\Uu(M,m+N)_{v_1,\dots,v_k}$ of sequences ending in $(v_1,\dots,v_k)$ is $(r(M,N)-sr(R)-k-1)$--connected. 
  \end{thm}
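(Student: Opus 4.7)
The plan is to prove the theorem by induction on the relative rank $r(M, N)$, deriving the fixed-tail statement from the main connectivity statement by a projection argument. The approach adapts classical techniques of van der Kallen and Maazen to the affine setting of an arbitrary coset $m + N$.

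First I would prove the reduction from the tail subposet to an untailed poset of smaller rank. Given a unimodular sequence $(v_1,\dots,v_k) \in \Uu(M, m+N)$, choose a complementary summand $M = \langle v_1, \dots, v_k \rangle \oplus P$ and let $\pi\colon M \to P$ be the projection. Since $v_1 \in m + N$, every element of $m + N$ has the form $v_1 + n$ for some $n \in N$, so its projection lies in $\pi(N)$, and every element of $\pi(N)$ lifts in this way. A sequence $(w_1, \dots, w_\ell, v_1, \dots, v_k)$ is unimodular in $M$ iff $(\pi(w_1), \dots, \pi(w_\ell))$ is unimodular in $P$, by row reduction against the identity block coming from the $v_i$. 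This yields a poset isomorphism
\[
\Uu(M, m+N)_{v_1,\dots,v_k} \;\cong\; \Uu(P,\, 0 + \pi(N)),
\]
and a summand-rank computation using a direct summand $L \subseteq N$ of $M$ of rank $r(M,N)$ shows $r(P, \pi(N)) \geq r(M, N) - k$, so the claimed connectivity for the tailed poset will follow from the untailed statement applied to $(P, \pi(N))$.

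Next I would prove the main connectivity statement by induction on $r(M, N)$. The base case is $r(M, N) \leq sr(R)$: the required connectivity is trivial except at equality, where non-emptiness of $\Uu(M, m+N)$ must be verified. For this one writes $m$ in coordinates relative to a rank-$sr(R)$ direct summand $L \subseteq N$ of $M$ and uses the stable range condition $(S_{sr(R)})$ to produce $n \in L \subseteq N$ with $m + n$ unimodular in $M$. For the inductive step, a map $f\colon S^i \to |\Uu(M, m+N)|$ with $i \leq r(M,N) - sr(R) - 1$ is extended to $D^{i+1}$ by the standard bad-simplex cell-by-cell argument: for any simplex $\sigma = \{w_0, \dots, w_j\}$ encountered, the link subposet $\Uu(M, m+N)_{w_0, \dots, w_j}$ is by Step~1 isomorphic to $\Uu(P, \pi(N))$ for a projection with $r(P, \pi(N)) \geq r(M,N) - (j+1)$, hence is $(r(M,N) - sr(R) - j - 2)$-connected by the inductive hypothesis on rank---exactly the connectivity needed to extend the attaching map of $\sigma$'s cone.

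The main obstacle is handling the affine condition $w \in m + N$ throughout the argument. In the classical setting $N = M$, $m = 0$, one can freely modify sequences by arbitrary elements of $M$; here every modification must stay in the affine translate, so all appeals to the stable range condition must produce correction terms inside $N$ specifically. This is why $r(M, N)$ is defined in terms of summands $L \subseteq N$ of $M$ rather than arbitrary summands of $M$, and why the projection step needs a careful check that the coset $m + N$ pulls back correctly along $\pi$. Tracking this affine constraint consistently through both the projection isomorphism and the bad-simplex induction is the most delicate part of the proof.
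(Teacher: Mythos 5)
Your reduction of the tailed poset to an untailed one via projection does not work: the map
\[
\pi\colon \Uu(M,m+N)_{v_1,\dots,v_k}\longrightarrow \Uu(P,\pi(N))
\]
is surjective on simplices but in general badly non-injective, so it is not a poset isomorphism. Its fibers on vertices are cosets of $N\cap\langle v_1,\dots,v_k\rangle$, and this intersection is essentially never trivial. For $k\geq 2$ the element $v_1-v_2=n_1-n_2$ (writing $v_i=m+n_i$) lies in $N\cap\langle v_1,\dots,v_k\rangle$ and is nonzero, so $(w_1,\dots,w_\ell)$ and $(w_1+v_1-v_2,w_2,\dots,w_\ell)$ are distinct simplices of $\Uu(M,m+N)_{v_1,\dots,v_k}$ with the same image under $\pi$. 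When $m\in N$ (the classical case $m+N=N$) one has $\langle v_1,\dots,v_k\rangle\subseteq N$ and the collapse is maximal. A poset map with discrete, multi-point fibers need not induce a homotopy equivalence on order complexes, so the connectivity of $\Uu(P,\pi(N))$ does not transfer to $\Uu(M,m+N)_{v_1,\dots,v_k}$ in the way your induction requires. Since this reduction drives both the deduction of the tailed statement from the untailed one and the bad-simplex link argument, the gap is fatal to the overall scheme. (A secondary issue: the inequality $r(P,\pi(N))\geq r(M,N)-k$ assumes that $\pi(L)$ is a free direct summand of $P$ whenever $L\subseteq N$ is a maximal summand of $M$; this needs an argument and can fail when $L\cap\langle v_1,\dots,v_k\rangle$ is not a summand of $L$.)

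The paper's proof (Appendix A, following van der Kallen and Friedrich) avoids this obstacle entirely by never projecting. It works in $M^\infty=M\oplus R^\infty$ and runs a simultaneous induction on $r=r(M,N)$ over \emph{four} interlocking statements: the two in the theorem and two auxiliary statements about sequences constrained to a union of two parallel translates $l+N\cup l+N+e_1$. Rank is lowered by passing to a \emph{subset} $X=l+N'\cup l+N'+x_1$ of the coset, where $N'\subseteq N$ is a complement of a line $\langle x_1\rangle$ in $N$ --- a restriction rather than a quotient. To arrange that $x_1$ be ``visible'' in the first entry $v_1$, one first applies an explicit automorphism $f$ of $M^\infty$ that preserves the relevant affine set $Y$ setwise and makes the coordinate of $f(v_1)$ in a fixed rank-$r$ summand $N_0\subseteq N$ unimodular; the construction of $f$ is precisely where the stable range condition is used, and the automorphism is designed so that the affine constraint is undisturbed. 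The engine is van der Kallen's Lemma 2.13 on posets satisfying the chain condition, which requires carefully matched connectivity hypotheses for $\mathcal{O}(X)\cap F$ and $\mathcal{O}(X)\cap F_{(u_1,\dots,u_m)}$. Your base case (producing a unimodular vector in $m+N$ by the stable range condition applied in coordinates of a rank-$sr(R)$ summand of $N$) is sound, but the inductive machinery needs the restriction-plus-automorphism mechanism rather than the projection you propose.
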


 This result that can be found in \cite[Thm 6.5]{S22a} in the case $R=\Z$, and is based on earlier results of \cite[2.6]{vdKL},\cite[Thm 2.4]{Fri17}. A complete proof is given in Appendix~\ref{appA}. 

 \medskip

Let $\X(M)$ and $\X(M,m+N)$ denote the corresponding simplicial complexes of unimodular vectors: vertices of $\X(M)$ are unimodular vectors, and a collection of unimodular vectors forms a $p$-simplex in $\X(M)$ if they together form a unimodular sequence. And $\X(M,m+N)$ is the full subcomplex of $\X(M)$ on unimodular vectors of the form $m+n$ for $n\in N$. 




\smallskip

Theorem~\ref{connectivity of U} has the following consequence:

  \begin{cor}
Both $\X(M,m+N)^{ord}$ and $\X(M,m+N)$ are $(r(M,N)-sr(R)-1)$--connected. Moreover $\X(M,m+N)$ is wCM of dimension $r(M,N)-sr(R)$. 
    \end{cor}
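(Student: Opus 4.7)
The plan is to deduce all three claims from Theorem~\ref{connectivity of U} by passing between the poset $\Uu(M,m+N)$, its ordered simplicial incarnation $\X(M,m+N)^{ord}$, and the unordered complex $\X(M,m+N)$. The starting observation is that unimodularity of a sequence of vectors is independent of the order of its entries. Consequently $\Uu(M,m+N)$, ordered by ``is a subsequence of'', is precisely the poset of simplices of the semi-simplicial set $\X(M,m+N)^{ord}$ (face relation = subsequence relation), and so its nerve is canonically the barycentric subdivision of $|\X(M,m+N)^{ord}|$. Theorem~\ref{connectivity of U} therefore directly gives the $(r(M,N)-sr(R)-1)$-connectivity of $\X(M,m+N)^{ord}$.

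Next I would pass from $\X^{ord}$ to $\X$ via the natural ``forget the order'' projection $\pi\colon |\X(M,m+N)^{ord}| \to |\X(M,m+N)|$. Given a simplicial representative $f\colon S^i \to |\X(M,m+N)|$ with $i \le r(M,N)-sr(R)-1$, one picks any total ordering on the vertices of the triangulation of $S^i$; this upgrades $f$ to a simplicial map $\tilde f\colon S^i \to |\X(M,m+N)^{ord}|$ because order-independence of unimodularity means every permutation of a simplex of $\X$ is again a simplex of $\X^{ord}$. Using the connectivity of $\X^{ord}$ already proved, one fills $\tilde f$ over a disk, and $\pi$ applied to the filling provides a filling of $f$. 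For the wCM property, one must moreover bound the connectivity of the link of each $p$-simplex $\sigma = \{v_0,\dots,v_p\}$ of $\X(M,m+N)$. This link is the simplicial complex of unimodular sets $\{w_0,\dots,w_q\}$ for which $(v_0,\dots,v_p,w_0,\dots,w_q)$ is unimodular; the same ordered/unordered comparison identifies its homotopy type with that of the poset $\Uu(M,m+N)_{v_0,\dots,v_p}$ of sequences ending in $(v_0,\dots,v_p)$. Applying the second half of Theorem~\ref{connectivity of U} with the parameter~$k$ there equal to $p+1$ yields $(r(M,N)-sr(R)-p-2)$-connectivity, which is exactly the lower bound needed for $\X(M,m+N)$ to be wCM of dimension $r(M,N)-sr(R)$.

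The step I expect to require the most care is the identification of the link of $\sigma$ in $\X(M,m+N)$ with the subposet $\Uu(M,m+N)_{v_0,\dots,v_p}$: one needs to verify that prepending an arbitrary unimodular sequence to the fixed tail $(v_0,\dots,v_p)$ both preserves and reflects unimodularity of the concatenated sequence, so that the link in $\X^{ord}$ of any ordering of $\sigma$ is the semi-simplicial set whose associated poset of simplices is $\Uu(M,m+N)_{v_0,\dots,v_p}$. This reduces once more to order-independence of the unimodular condition, and once it is in place the argument for the link runs in exact parallel with the absolute case; no further ring-theoretic input is required beyond what enters Theorem~\ref{connectivity of U}.
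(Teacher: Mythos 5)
Your argument is essentially the paper's: identify $\Uu(M,m+N)$ with the poset of simplices of $\X(M,m+N)^{ord}$, use a chosen total order to pass between $\X^{ord}$ and $\X$, and handle links via $\Uu(M,m+N)_{v_0,\dots,v_p}$ with the same degree shift. The one place the paper is slightly cleaner is the passage from $\X^{ord}$ to $\X$: rather than ordering the vertices of the sphere's triangulation (which can run into trouble if the simplicial approximation of $f$ collapses a simplex, since $\X^{ord}$ is semi-simplicial and has no degeneracies), the paper orders the vertices of $\X$ once and for all, producing a global section of the forgetful map $\X^{ord}\to\X$ and hence a retraction making $|\X|$ at least as connected as $|\X^{ord}|$; you should adopt that version.
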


\begin{proof}
  The result thus follows from combining Theorem~\ref{connectivity of U} with Proposition~\ref{prop:SCSS} since the poset $\Uu(M,m+N)$ identifies with the poset of sequences $P(\X(M,m+N))$. 
%
  \end{proof}

\begin{prop}\label{prop:Aforfun}
Let $M$ be a finitely generated free module of rank $\rk(M)$ and $\del:M\to R$ a linear map. If $\del$ is surjective, then 
the algebraic arc complex $\Aa^{alg}(M,\del)$ is $(\rk(M)-sr(R)-2)$--connected. 
\end{prop}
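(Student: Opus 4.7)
The plan is to identify $\Aa^{alg}(M,\del)$ with one of the subcomplexes $\X(M,m+N)$ of unimodular vectors studied in Theorem~\ref{connectivity of U} and its corollary, and then read off the connectivity bound.

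Since $\del$ is surjective, we may fix any arc $a_0 \in M$ (an element with $\del(a_0)=1$). Writing $N:=\ker(\del)$, any element of $M$ with $\del$-value $1$ is of the form $a_0+n$ with $n\in N$, and conversely every such element has $\del$-value $1$. In particular, the vertices of $\Aa^{alg}(M,\del)$ are exactly the unimodular vectors of the form $a_0+n$ with $n\in N$ (noting that an arc is automatically unimodular since $\del(a)=1$ realizes it as a generator of a split summand). The simplex condition in $\Aa^{alg}(M,\del)$ is moreover by definition that the collection is a unimodular sequence, which is precisely the simplex condition in $\X(M,a_0+N)$. Hence
\[
\Aa^{alg}(M,\del) \;=\; \X(M,\,a_0+N).
\]

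Next I would compute the relative rank $r(M,N)$. Because $\del:M\rar R$ is surjective onto a free module of rank $1$, the short exact sequence $0\rar N\rar M\rar R\rar 0$ splits, giving a decomposition $M\cong N\oplus R$ as $R$-modules. Thus $N$ itself is a direct summand of $M$, of rank $\rk(M)-1$, and no larger direct summand of $M$ can be contained in $N$. Therefore $r(M,N)=\rk(M)-1$.

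Applying the corollary to Theorem~\ref{connectivity of U} to the complex $\X(M,a_0+N)$ then gives that this complex is $\bigl(r(M,N)-sr(R)-1\bigr)$-connected, i.e.
\[
\bigl(\rk(M)-1-sr(R)-1\bigr)\text{-connected} \;=\; \bigl(\rk(M)-sr(R)-2\bigr)\text{-connected},
\]
which is the claimed bound. There is no real obstacle here beyond checking the identification of $\Aa^{alg}(M,\del)$ with $\X(M,a_0+N)$ on the nose and verifying the splitting of $\del$; the rest is a direct application of the already-quoted connectivity theorem.
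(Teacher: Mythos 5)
Your identification of $\Aa^{alg}(M,\del)$ with $\X(M,a_0+\ker\del)$ is exactly the one the paper uses, and the final appeal to the corollary of Theorem~\ref{connectivity of U} is also the same. The gap is in the step where you assert that $N:=\ker\del$ ``is a direct summand of $M$, of rank $\rk(M)-1$,'' and conclude $r(M,N)=\rk(M)-1$. The split exact sequence gives $N\oplus R\cong R^{\rk(M)}$, so $N$ is \emph{stably free} of rank $\rk(M)-1$, but over a general commutative ring a stably free module need not be free. The definition of $r(M,N)$ (as it is actually used in the proof of Theorem~\ref{connectivity of U}, where a basis of the witness $N_0\cong R^r$ is fixed) measures the maximal rank of a \emph{free} direct summand of $M$ lying in $N$; if $N$ is not free, no such $L$ of rank $\rk(M)-1$ exists and $r(M,N)<\rk(M)-1$. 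So your equality $r(M,N)=\rk(M)-1$ is not justified as written.

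The paper closes this gap in two moves. First it observes that when $\rk(M)\le sr(R)+1$ the claimed connectivity is $\le -1$, and $\Aa^{alg}(M,\del)\neq\emptyset$ because $\del$ is surjective, so the statement is automatic there. Having reduced to $\rk(M)\ge sr(R)+2$, it then invokes Bass-type cancellation (\cite[Prop 2.7]{Fri17}), which applies precisely because $\rk(M)\ge sr(R)+1$, to conclude $\ker\del\cong R^{\rk(M)-1}$ is genuinely free, and only then reads off $r(M,\ker\del)=\rk(M)-1$. You should add both the trivial-range dichotomy and the cancellation step to make your argument correct.
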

  
  \begin{proof}
  Since $\del$ is surjective, we have that $\Aa^{alg}(M,\del)$ is non-empty, proving the result for $\rk(M)-sr(R)-2\le -1$. 
 Hence we can assume that $\rk(M) \ge sr(R)+2$. 
  
    The  complex of $\Aa^{alg}(M,\del)$ identifies with $\X(M,a+\ker\del)$ for $a\in M$ an arc. 
    Since $\del$ is surjective then $\ker\del \subset M$ is a summand, so we have that $\tM(M,\ker\del)=\rk(\ker\del)$.
    Moreover, $R^{\rk(M)} \cong M \cong R \oplus \ker \del$ as $R$-modules, since $M$ is finitely generated free. Then cancellation, \cite[Proposition 2.7]{Fri17}, gives that $\ker \del \cong R^{\rk(M)-1}$ since $\rk(M) \ge sr(R)+1$. 
    The result then follows from Theorem~\ref{connectivity of U}.
    \end{proof}

 The  connectivity of the complex of geometric non-separating arcs is usually deduced from that of the complex of all arcs. In the algebraic case, while there is a map $\B(M,\la,\del)\to \Aa^{alg}(M,\la)$,
    it is more convenient to deduce the connectivity of $\B$ from that of an arc complex closer to it and whose connectivity is also determined by Theorem~\ref{connectivity of U}. We describe this complex now. 

    \smallskip

    Let $\la^\vee(\ker\del)$ be the submodule of $(\ker\del)^\vee$ of maps that can be written as $c\bullet -$ for $c\in \ker \del$. Fix some arc $m$. 
There is a map
$$\B(M,\la,\del)\to \X((\ker\del)^\vee,m \bullet - +\la^\vee(\ker\del))$$
associating to $a$ the map $a\bullet -: \ker\del \to R$. 

For $M=(M,\la,\del)$, we write 
$$\tM(M,\la,\del):=r\big((\ker\del)^\vee,\la^\vee(\ker\del)\big).$$

\begin{prop}\label{prop:connectivity B} Let $(M,\la,\del)\in \Fd$. Then 
$\B(M,\la,\del)$ and $\B(M,\la,\del)^{ord}$ are $(\tM(M,\la,\del)-sr(R)-1)$--connected. 
\end{prop}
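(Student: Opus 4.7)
The plan is to compare $\B$ with the unimodular vector complex $\X(N^\vee, m\bullet\! - + \la^\vee(N))$, where $N = \ker\del$ and $m$ is any fixed arc (if no arc exists, $\B$ is empty and the bound is trivially handled). The connectivity of the latter is exactly $\tM(M,\la,\del) - sr(R) - 1$ by Theorem~\ref{connectivity of U}, since $\tM(M,\la,\del) = r(N^\vee, \la^\vee(N))$ by definition. The bridge is the map $\Phi \colon \B(M,\la,\del) \to \X(N^\vee, m\bullet\!- +\la^\vee(N))$ sending an arc $a$ to $a\bullet\! -|_N$. This is well-defined and simplex-preserving because, using the splitting $M^\vee = N^\vee \oplus R\langle \del \rangle$ induced by $m$, unimodularity of $\{a_0\bullet\! -, \dots, a_p\bullet\! -, \del\}$ in $M^\vee$ is equivalent to unimodularity of $\{a_0\bullet\!-|_N, \dots, a_p\bullet\!-|_N\}$ in $N^\vee$. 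In particular $\Phi$ preserves dimension of simplices, and it is surjective on vertices: any unimodular $\phi \in m\bullet\!-|_N + \la^\vee(N)$ lifts to the arc $m+c$ whenever $\phi = (m+c)\bullet\!-|_N$.

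The main step is to show that the induced map $\Phi^{ord} \colon \B^{ord} \to \X(N^\vee, m\bullet\!-+\la^\vee(N))^{ord}$ is a trivial fibration of semi-simplicial sets, i.e.~it has the right lifting property against every boundary inclusion $\partial\Delta^n \hookrightarrow \Delta^n$. For $n=0$, this is surjectivity on vertices. For $n \geq 1$, one is given a tuple $(a_0,\dots,a_n) \in V(\B)^{n+1}$ whose every $n$-subset spans an $(n-1)$-simplex of $\B^{ord}$, together with an $n$-simplex $(\phi_0,\dots,\phi_n)$ of $\X^{ord}$ with $\Phi(a_i)=\phi_i$, and one needs the full tuple $(a_0,\dots,a_n)$ to span an $n$-simplex of $\B^{ord}$. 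Distinctness of the $a_i$ follows from that of the $\phi_i$, and joint non-separatedness of $\{a_0,\dots,a_n\}$ follows directly from joint unimodularity of $\{\phi_0,\dots,\phi_n\}$ in $N^\vee$ via the equivalence above. Hence the (unique) lift exists, and standard semi-simplicial set theory then yields that $|\Phi^{ord}|$ is a weak equivalence, giving the claimed connectivity bound on $|\B^{ord}|$.

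Finally, the connectivity of $\B$ itself follows exactly as in the corollary to Theorem~\ref{connectivity of U}: choosing a total ordering on $V(\B)$ produces a section of the forgetful map $\B^{ord} \to \B$, realizing $|\B|$ as a retract of $|\B^{ord}|$ and so inheriting the same lower bound on connectivity.

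The main obstacle is precisely identifying the correspondence between the (joint) non-separating condition in $\B$ and unimodularity in $N^\vee$. Once this translation is made clean, the lifting argument proceeds formally, and the entire connectivity question is reduced to the already-known connectivity of the unimodular complex $\X(N^\vee, m\bullet\!-+\la^\vee(N))^{ord}$ provided by Theorem~\ref{connectivity of U}.
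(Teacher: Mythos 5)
Your proposal is correct and uses the same auxiliary complex and the same reduction (translating the joint non-separating condition into unimodularity in $(\ker\del)^\vee$ via the splitting induced by a fixed arc), but deduces the connectivity by a genuinely different argument. The paper observes that the map $\Phi\colon\B(M,\la,\del)\to\X((\ker\del)^\vee,m\bullet\! - +\la^\vee(\ker\del))$ exhibits $\B$ as a \emph{complete join complex} over the target in the sense of \cite{HatWah}, then invokes \cite[Prop.~3.5]{HatWah} to transfer the wCM property (degree $\tM(M,\la,\del)-sr(R)$) from the target to $\B$, and finally \cite[Prop.~2.14]{RWW} to pass from $\B$ to $\B^{ord}$. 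You instead show directly that $\Phi^{ord}\colon\B^{ord}\to\X^{ord}$ has the right lifting property against all boundary inclusions $\partial\Delta^n\hookrightarrow\Delta^n$ --- which is immediate from the ``simplex iff image is a simplex'' property plus surjectivity on vertices --- and conclude that $|\Phi^{ord}|$ is a weak equivalence, then pass to $\B$ by the standard retract via a total order. Your route is more self-contained (it avoids the external wCM-transfer machinery and even yields a slightly stronger conclusion, namely a genuine weak equivalence $|\B^{ord}|\simeq|\X^{ord}|$), while the paper's route additionally establishes that $\B$ is wCM of dimension $\tM-sr(R)$, information which your argument does not produce. One small caveat: the step ``RLP against boundary inclusions of semi-simplicial sets implies weak equivalence on realizations'' should be cited or briefly justified, as this is standard for simplicial sets but requires a word of care without degeneracies; the remark that the lift is unique for $n\ge 1$ (which you correctly note) in fact helps here, and an alternative is to observe that this is precisely the complete-join-complex situation and to invoke the fact that the realization of a complete join complex over $\mathcal S$ deformation retracts to $|\mathcal S|$. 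Also note that your parenthetical observation that the lifting hypothesis on the $n$-element subsets is not actually needed is accurate: the complete join property makes the lifting condition depend only on the image simplex.
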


Recall from \cite{HatWah} that a simplicial complex $\mathcal{X}$ is a {\em complete join complex} over another simplical complex $\mathcal{S}$ if there is a surjective map $\pi:\mathcal{X}\to \
\mathcal{S}$ and vertices $x_0,\dots,x_p$ of $\mathcal{X}$ form a $p$-simplex if and only if the projections $\pi(x_0),\dots,\pi(x_p)$ form a $p$-simplex in $\mathcal{S}$. The proof will use that the simplicial complex $\B$ can be seen as a join complex over the simplicial complex of unimodular sequences just described. 

\begin{proof}
The forgetful map $\B(M,\la,\del)\to \X((\ker\del)^\vee,m \bullet-+\la^\vee(\ker\del))$ taking an arc $a$ to the map $a\bullet -: \ker\del\to R$ exhibits  $\B(M,\la,\del)$ as a complete join complex over $\X((\ker\del)^\vee,m\bullet- +\la^\vee(\ker\del))$. 
Indeed, a vertex of  $\B(M,\la,\del)$ is a vertex $\phi$ of $\X((\ker\del)^\vee,m\bullet- +\la^\vee(\ker\del))$ together with the choice of an arc $a\in M$ such that $\phi=a\bullet -$, and we can write $a=m+c$ for some $c\in \ker\del$. Moreover,  the condition for arcs $a_0,\dots,a_p$ to form a $p$-simplex of $\B(M,\la,\del)$  is determined by whether their images form a $p$--simplex in $\X((\ker\del)^\vee,m \bullet-+\la^\vee(\ker\del))$. \\
Now  $\X((\ker\del)^\vee,m \bullet-+\la^\vee(\ker\del))$ is wCM of dimension $r((\ker\del)^\vee,\la^\vee(\ker\del))-sr(R)$. It then follows from \cite[Prop 3.5]{HatWah} that $\B(M,\la,\del)$ is also wCM of dimension $r((\ker\del)^\vee,\la^\vee(\ker\del))-sr(R)$, and in particular $(r((\ker\del)^\vee,\la^\vee(\ker\del))-sr(R)-1)$--connected. Finally by \cite[Prop 2.14]{RWW}, the ordered complex $\B(M,\la,\del)^{ord}$ has the same connectivity. 
\end{proof}

The following result will allow us to rewrite the above connectivity bound in terms of the arc genus. 

\begin{prop} \label{prop: t}
Let $(M,\la,\del)$ be a formed space with boundary such that $g_X(M,\la,\del) \ge 2usr(R)+2$, or $R$ is a PID. Then $\tM(M,\la,\del)\ge g_X(M,\la,\del)-2$. 
\end{prop}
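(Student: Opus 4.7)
The plan is to unpack the definition of $\tM$ and exhibit a direct summand of $(\ker\del)^\vee$ of rank $\ge g_X(M,\la,\del)-2$ that lies inside $\la^\vee(\ker\del)$. The main input is Proposition~\ref{prop:formula for gX}(b), which under our hypotheses gives the identity $g_X(M,\la,\del) = 1 + g_\HH(M,\la) + h$ with $h := g_\HH(\ker\del,\la|_{\ker\del})$. We should first check that its assumptions hold: if $R$ is a PID the required condition is immediate, while if $g_X(M,\la,\del) \ge 2\,usr(R)+2$, then $\del$ is automatically surjective (hence unimodular), and Proposition~\ref{prop:formula for gX}(a) forces $g_\HH(M,\la) \ge usr(R)+1$, so the relevant hypothesis of Proposition~\ref{prop:formula for gX}(b) applies.

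Now I would split off a hyperbolic summand from $\ker\del$ of maximal size $h$: there is an orthogonal decomposition $\ker\del = \HH^{\oplus h} \oplus K$ as formed spaces (using Lemma~\ref{lem:splitH}(i)), which in turn yields a decomposition of modules
\[
(\ker\del)^\vee = (\HH^{\oplus h})^\vee \oplus K^\vee.
\]
Consider the $R$-linear map $\phi\colon \HH^{\oplus h} \to (\ker\del)^\vee$ defined by $\phi(v) = \la(v,-)|_{\ker\del}$. By definition $\im(\phi) \subseteq \la^\vee(\ker\del)$. Since $K$ is orthogonal to $\HH^{\oplus h}$ inside $\ker\del$, the functional $\phi(v)$ vanishes on $K$, so $\phi$ actually lands in the summand $(\HH^{\oplus h})^\vee$. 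The resulting map $\HH^{\oplus h} \to (\HH^{\oplus h})^\vee$ is an isomorphism because the hyperbolic form is non-degenerate. Hence $\im(\phi) = (\HH^{\oplus h})^\vee$ is a direct summand of $(\ker\del)^\vee$ of rank $2h$ contained in $\la^\vee(\ker\del)$, which gives $\tM(M,\la,\del) \ge 2h$.

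To conclude, I would invoke Proposition~\ref{prop:properties hyperbolics}(iv) applied to the unimodular functional $\del \in M^\vee$ (its hypotheses are met exactly as above: PID, or else $g_\HH(M,\la) \ge usr(R)+1$ as just observed), yielding $h \ge g_\HH(M,\la) - 1$. Combining,
\[
\tM(M,\la,\del) \;\ge\; 2h \;\ge\; g_\HH(M,\la) + h - 1 \;=\; g_X(M,\la,\del) - 2,
\]
as required.

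The argument is essentially bookkeeping once one sees that the hyperbolic summand of $\ker\del$ yields directly the desired summand inside $\la^\vee(\ker\del)$; the only subtle point is verifying that the hypotheses of Proposition~\ref{prop:formula for gX}(b) and Proposition~\ref{prop:properties hyperbolics}(iv) are both in force, which is where the numerical assumption $g_X \ge 2\,usr(R)+2$ (or the PID hypothesis) is used.
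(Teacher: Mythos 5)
Your proposal is correct and follows essentially the same route as the paper: split off a maximal hyperbolic summand $\HH^{\oplus h}$ of $\ker\del$, observe its image under $\la^\vee$ is a rank-$2h$ summand of $(\ker\del)^\vee$, and conclude via the genus identity from Proposition~\ref{prop:formula for gX}(b) and the bound $h \ge g_\HH(M,\la)-1$ from Proposition~\ref{prop:properties hyperbolics}(iv). One small imprecision: in the PID case, Proposition~\ref{prop:formula for gX}(b)(i) also requires $\del$ unimodular, so you should first discard the trivial case $g_X(M,\la,\del)=0$ (where the claimed inequality $\tM\ge -2$ holds vacuously) before asserting the hypothesis is "immediate," exactly as the paper does.
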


\begin{proof}
The result is trivial if $g_X(M,\la,\del)=0$, so we can assume without loss of generality that $g_X(M,\la,\del)\ge 1$, i.e.~that $\del$ is unimodular. 
By Proposition~\ref{prop:formula for gX}(b)((i),(iii)) we have 
$$g_X(M,\la,\del) = g_{\HH}(M,\la)+g_{\HH}(\ker\del,\la|_{\ker\del})+1.$$
By Proposition \ref{prop:formula for gX}(a), the inequality $g_X(M,\la,\del) \ge 2usr(R)+2$ implies $g_{\HH}(M,\la) \ge usr(R)+1$, and hence by Proposition \ref{prop:properties hyperbolics}(iv) we have 
$$2 g_{\HH}(\ker\del,\la|_{\ker\del})\ge g_{\HH}(M,\la)+g_{\HH}(\ker\del,\la|_{\ker\del})-1.$$ 

Pick a hyperbolic subspace $L \subset \ker\del$ such that $(L,\la|_L) \cong \HH^{g_{\HH}(\ker\del,\la)}$. 
Then, $\la^\vee(L)$ is a direct summand of $(\ker\del)^\vee$ because of the orthogonal decomposition $\ker\del=L \oplus L^{\perp}$ (see Lemma \ref{lem:splitH}(i)). 
Thus $\tM(M,\la,\del)=r\big((\ker\del)^\vee,\la^\vee(\ker\del)\big) \geq \rk(\lambda^\vee(L))=\rk(L)=2g_{\HH}(\ker\del,\la)$, which proves the result since the above computations give that $2g_{\HH}(\ker\del,\la)\geq g_X(M,\la,\del)-2$.
\end{proof}

The above results assemble to the following result: 

\begin{cor} \label{cor: connectivity B}
 Let $(M,\la,\del)$ be a formed space with boundary with $g_X(M,\la,\del) \ge 2usr(R)+2$, or such that $R$ is a PID, 
 then $\B(M,\la,\del)$ and $\B(M,\la,\del)^{ord}$ are $(g_X(M,\la,\del)-sr(R)-3)$--connected.  
\end{cor}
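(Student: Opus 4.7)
The plan is simply to combine the two results that immediately precede this corollary. Proposition~\ref{prop:connectivity B} gives that $\B(M,\la,\del)$ and $\B(M,\la,\del)^{ord}$ are $(\tM(M,\la,\del)-sr(R)-1)$--connected, with no hypothesis on $R$ or on the arc genus. So the only thing to check is that under the current hypotheses we have the estimate
\[
\tM(M,\la,\del) \;\ge\; g_X(M,\la,\del) - 2,
\]
which then yields $\tM(M,\la,\del)-sr(R)-1 \ge g_X(M,\la,\del)-sr(R)-3$ and the corollary follows.

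This estimate is exactly the content of Proposition~\ref{prop: t}, whose hypotheses (namely $g_X(M,\la,\del) \ge 2usr(R)+2$, or $R$ a PID) match those of the corollary verbatim. So the first step is to invoke Proposition~\ref{prop: t} to substitute the lower bound for $\tM$ into the connectivity estimate given by Proposition~\ref{prop:connectivity B}.

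One small point worth addressing for cleanliness is the trivial/edge case: if $g_X(M,\la,\del)-sr(R)-3 \le -2$ the statement is vacuous, and if it equals $-1$ it only asserts non-emptiness, which in fact is guaranteed as soon as $g_X(M,\la,\del) \ge 1$ (so that $\del$ is unimodular and an arc exists, with $\{a\bullet -, \del\}$ jointly unimodular because $\ker\del$ contains a hyperbolic summand after checking rank). In all other cases the corollary is just a transcription of the chain of inequalities above. There is no real obstacle here: the substantive work was done in Proposition~\ref{prop:connectivity B} (reducing $\B$ to a complete join over a complex of unimodular sequences and applying Theorem~\ref{connectivity of U}) and in Proposition~\ref{prop: t} (extracting a large enough hyperbolic summand of $\ker\del$ under the finite-$usr$ or PID hypothesis), and this corollary is the bookkeeping step that assembles them.
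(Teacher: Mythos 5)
Your proposal is correct and is exactly the argument the paper intends: the corollary is stated immediately after Proposition~\ref{prop: t} with the remark "The above results assemble to the following result," so it is indeed just the substitution of the bound $\tM(M,\la,\del)\ge g_X(M,\la,\del)-2$ from Proposition~\ref{prop: t} into the connectivity estimate of Proposition~\ref{prop:connectivity B}. The edge-case discussion you include is harmless but unnecessary, since the inequality chain already yields the claimed connectivity for all values of the parameters.
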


\subsection{Cutting arcs} \label{sec: cutting arcs}

Following the geometric argument in \cite{HVW}, we need to understand the effect of ``cutting a simplex of arcs" in $\B(M,\la,\del)$. 

Recall from Lemma~\ref{lem:splitH} that for $N \subset M$, we denote 
$M\minus N=\{m\in M\ |\ \la'(n,m)=0 \ \forall n\in N\}$, where $\la'(n,m)=\la(n,m)+\del n\del m$. 
Given a $p$-simplex $\sigma=\{a_0,\cdots,a_p\}$, we will write $M\minus \s=M\minus N$ for $N=\langle a_0,\dots,a_p\rangle$ the subspace generated by the arcs. Given that $\del(a_i)=1$ for each $i$, we can rewrite this subspace as 
$$M \minus \sigma:= \bigcap_{i=0}^{p}{\ker(\del+a_i \bullet-)} \subset M.$$
We call $(M\minus \sigma, \la|_{M\minus \sigma},\del|_{M\minus \sigma})$ the \textit{cut formed space}.

When $a_0,\dots,a_p$ are homology classes of geometric arcs in a bidecorated surface, 
the cut formed space $M\minus \s$ identifies with the relative first homology group associated to the  surface obtained by cutting the corresponding collection of (geometric) arcs, for an appropriate choice of marked intervals $I_0,I_1$ in the cut surface.

The main result we will need about cut formed spaces is the effect cutting arcs has on the rank and arc-genus. 

\begin{lem} \label{lem:cut form properties}
Let $\sigma=\{a_0,\dots,a_p\}$ be a $p$-simplex in $\B(M,\la,\del)$. 
\begin{enumerate}[(i)]
    \item If either $\rk(M) \ge sr(R)+p+1$ or $R$ is a PID then 
$$\rk(M \minus \sigma)= \rk(M)-(p+1).$$
\item If either $g_X(M,\la,\del) \ge 2 usr(R)+2p+4$ or $R$ is a PID then $$g_X(M \minus \sigma, \la|_{M \minus \sigma},\del|_{M \minus \sigma}) \geq g_X(M,\la,\del)-(2p+1).$$
\end{enumerate}

\end{lem}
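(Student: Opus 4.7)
For (i), the joint non-separating condition on $\sigma$ provides that $\{a_0\bullet-,\dots,a_p\bullet-,\del\}$ is unimodular in $M^\vee$. Performing the elementary row operations $a_i\bullet-\mapsto \psi_i:=\del+a_i\bullet-$ preserves unimodularity, and the dual basis $u_0,\dots,u_p\in M$ for $\{a_0\bullet-,\dots,a_p\bullet-\}$ (which automatically satisfies $\del(u_i)=0$) continues to serve as a dual basis for $\{\psi_0,\dots,\psi_p\}$ in $M^\vee$. Hence $M\minus\sigma=\bigcap_i\ker\psi_i$ is the kernel of a split surjection $M\twoheadrightarrow R^{p+1}$, i.e.\ a direct summand of $M$ of corank $p+1$. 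Over a PID this summand is automatically free; in general $M\minus\sigma$ is stably free via $M\minus\sigma\oplus R^{p+1}\cong R^{\rk M}$, so Bass's cancellation theorem yields freeness of rank $\rk M-(p+1)$ whenever $\rk M-(p+1)\ge sr(R)$.

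For (ii), I argue by induction on $p$ and express arc genera via Proposition~\ref{prop:formula for gX}(b). The crux is the base case $p=0$: cutting a single non-separating arc decreases $g_X$ by exactly one. The non-separating condition yields $m_0\in M$ with $\la(a_0,m_0)=1$ and $\del(m_0)=0$; then $\la$ restricts to a hyperbolic form on $\langle a_0,m_0\rangle\cong\HH$, so Lemma~\ref{lem:splitH}(i) gives an orthogonal splitting $(M,\la)\cong\HH\oplus\HH^\perp$ with $g_{\HH}(\HH^\perp)=g_{\HH}(M)-1$. The map $\phi:\HH^\perp\to\ker\del\cap\ker(a_0\bullet-)$ defined by $v\mapsto v-\del(v)a_0$ is a $\la$-preserving bijection (direct calculation, using that $\la(a_0,v)=0$ for $v\in\HH^\perp$ and $\la(a_0,a_0)=0$), whence $g_{\HH}(\ker\del\cap\ker(a_0\bullet-))=g_{\HH}(M)-1$. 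On the other hand, Lemma~\ref{lem:splitH}(ii) applied to the morphism $X\to M$, $1\mapsto a_0$, gives $M\cong(M\minus\{a_0\})\#X$; combined with Remark~\ref{rem:HorX}, this identifies $(\ker\del,\la|_{\ker\del})$ with $(M\minus\{a_0\},\la|_{M\minus\{a_0\}})$ as alternating forms, and a direct check gives $\ker\del|_{M\minus\{a_0\}}=\ker\del\cap\ker(a_0\bullet-)$. Applying Proposition~\ref{prop:formula for gX}(b) to both $M$ and $M\minus\{a_0\}$ (whose hypotheses are verified: unimodularity of $\del|_{M\minus\{a_0\}}$ comes from combinations of the non-separating dual basis, and the hyperbolic-genus bound $g_{\HH}(M\minus\{a_0\})\ge usr(R)+1$ from Proposition~\ref{prop:properties hyperbolics}(iv)) then yields
\[g_X(M)-g_X(M\minus\{a_0\})=g_{\HH}(M)-g_{\HH}(\ker\del\cap\ker(a_0\bullet-))=1.\]

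For the inductive step, set $M_i:=M\minus\{a_0,\dots,a_{i-1}\}$ and apply Corollary~\ref{cor R of a kernel} successively to the functionals $\psi_i|_{M_{i-1}}$ for $i=1,\dots,p$. The unimodularity of $\{\psi_i|_{M_{i-1}},\del|_{M_{i-1}}\}$ at each stage is obtained by projecting the original dual basis successively into the cut spaces; the arc-genus hypothesis of the corollary is maintained since total losses are at most $1+2(i-1)$ and our hypothesis $g_X(M)\ge 2\,usr(R)+2p+4$ leaves sufficient slack. Each application reduces $g_X$ by at most $2$, so accumulating with the base case gives
\[g_X(M\minus\sigma)\ge g_X(M\minus\{a_0\})-2p\ge g_X(M)-1-2p=g_X(M)-(2p+1).\]

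The main obstacle is the base case's sharp bound of one (as opposed to the two delivered by Corollary~\ref{cor R of a kernel} applied directly). The saving exploits the specific structure of arcs: the pair $(a_0,m_0)$ from the non-separating condition generates a hyperbolic summand of $(M,\la)$, and the two cuts by $\del$ and $a_0\bullet-$ together remove precisely this single summand rather than two independent hyperbolic directions, as made precise by the isomorphism $\phi$ above.
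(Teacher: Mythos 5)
Your proof takes essentially the same route as the paper's: for (i) the same unimodularity and cancellation argument, and for (ii) the same induction on $p$ with a separate treatment of the base case $p=0$ via a hyperbolic summand generated by $a_0$ and a dual vector $m_0$, together with the explicit $\la$-preserving identification of $\langle a_0,m_0\rangle^\perp$ with $\ker\del\cap\ker(a_0\bullet -)$ (your $\phi$ is exactly the map the paper calls $g'$). The only slip worth flagging is attributional: the equality $g_{\HH}(\HH^\perp)=g_{\HH}(M)-1$ does \emph{not} follow from Lemma~\ref{lem:splitH}(i) (which gives only the orthogonal splitting, hence $g_{\HH}(\HH^\perp)\ge g_{\HH}(M)-1$ is not even automatic without cancellation); it is Proposition~\ref{prop:properties hyperbolics}(i), whose hypothesis $g_{\HH}(M)\ge usr(R)+1$ (or $R$ a PID) must be, and in the paper is, verified from $g_X(M)\ge 2\,usr(R)+2p+4$ via Proposition~\ref{prop:formula for gX}(a). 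That check should be made explicit, but it does not affect the validity of your argument.
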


\begin{proof}
  By definition $\{a_0 \bullet -, \cdots, a_p \bullet -, \del\}$ is unimodular in $M^\vee$ so $\{\del+a_0 \bullet -, \cdots, \del+ a_p \bullet -\}$ is also unimodular, which implies that $\rk(M \minus \sigma)= \rk(M)-(p+1)$ which applies under the current assumption by \cite[Prop 2.7]{Fri17}
  , proving (i) when $R$ is not a PID.
  For $R$ a PID we use the same argument noting that $M \setminus \sigma$ is a summand of the free finitely generated $R$-module $M$ with quotient isomorphic to $R^{p+1}$, so it has to be free of rank $\rk(M)-p-1$ by the classification theorem of finitely generated modules over PIDs.

\smallskip

We will show that $g_X(M \minus \sigma, \la|_{M \minus \sigma},\del|_{M \minus \sigma}) \geq g_X(M,\la,\del)-(2p+1)$ by induction on $p$. 
Let us begin by assuming the case $p=0$ and finish the induction, and then we will study the case $p=0$ in detail. 

Let $\tau=\{a_0,\dots, a_{p-1}\}$, then $\tau$ is a $(p-1)$-simplex in $\B(M,\la,\del)$ so by induction we have $g_X(M \minus \tau, \la|_{M \minus \tau},\del|_{M \minus \tau}) \geq g_X(M,\la,\del)-(2p-1)$. We want to apply Corollary \ref{cor R of a kernel} to $M\minus \tau$ with $\del=\del|_{M\minus \tau}$ and $l=(\del+a_p \bullet -)|_{M \minus \tau}$. 
Since $\{a_0 \bullet -, \cdots, a_p \bullet -, \del\}$ is unimodular in $M^\vee$ then so is $\{\del+a_0 \bullet-, \cdots, \del+a_{p-1} \bullet-, \del+ a_p \bullet -, \del\}$. 
Therefore $\{(\del+a_p \bullet -)|_{M \minus \tau}, \del|_{M \minus \tau}\}$ is unimodular in $(M \minus \tau)^\vee$. Moreover, either $R$ is a PID, or we have 
$$g_X(M\minus \tau,\la|_{M\minus\tau},\del|_{M\minus\tau})\ge g_{X}(M,\la,\del)-(2(p-1)+1)\ge 2usr(R)+2p+4-2p+1=2usr(R)+5,$$
using induction for the first inequality and the assumption for the second one. 
Hence we can apply Corollary \ref{cor R of a kernel}, and it gives the required result to finish the induction step.

\smallskip

Now let us consider the case $p=0$, so that $\sigma=\{a\}$. 
Here we cannot apply the above induction step since that would only show that $g_X(M \minus\sigma, \la|_{M \minus \sigma},\del|_{M \minus \sigma}) \geq g_X(M,\la,\del)-2$ instead of the required improved bound $g_X(M \minus \sigma, \la|_{M \minus \sigma},\del|_{M \minus \sigma}) \geq g_X(M,\la,\del)-1$.
To simplify the notation let us denote $\tilde{M}=M \minus\sigma=\ker(\del + a\bullet -)$, $\tilde{\la}=\la|_{M \minus \sigma}$ and $\tilde{\del}=\del|_{M \minus \sigma}$ for the rest of this proof. 

By definition, $a$ defines a map $X \to (M,\la,\del)$ so by Lemma \ref{lem:splitH}(ii) we have an isomorphism $(M,\la,\del) \cong (\tilde{M},\tilde{\la},\tilde{\del}) \# X$.

Since $\{a \bullet -, \del\}$ is unimodular in $M^\vee$, then $\tilde{\del}$ is unimodular in ${\tilde{M}}^\vee$. 
Without loss of generality $\rk(M) \geq 2$  (otherwise the result we want to show is trivial), and hence $\rk(\tilde{M}) \geq 1$ by the first part of this lemma, since the assumption of (ii) is stronger than that of (i) because $2usr(R)+2\ge sr(R)+1$.

By Proposition~\ref{prop:formula for gX}(a) and our assumption on $g_X(M,\la,\del)$, we have that $g_{\HH}(M,\la,\del)\ge usr(R)+2$, and (b)(ii) of the proposition gives that 
$$g_X(M,\la,\del) = 1+g_{\HH}(M,\la)+g_{\HH}(\ker\del,\la|_{\ker\del}).$$
Applying now Proposition~\ref{prop:properties hyperbolics}(iv) with $l=\del + a\bullet -$, we also get that $g_{\HH}(\tilde{M},\tilde{\la})\ge usr(R)+1$, so we can also apply Proposition~\ref{prop:formula for gX}(b)(ii) to $\tilde{M}$, which gives 
$$g_X(\tilde{M},\tilde{\la},\tilde{\del})=1+g_{\HH}(\tilde{M},\tilde{\la})+g_{\HH}(\ker \tilde{\del},\tilde{\la}|_{\ker \tilde{\del}}).$$

We are left to compare the terms in the formulas for $g_X(M,\la,\del)$ and $g_X(\tilde{M},\tilde{\la},\tilde{\del})$.
We first show that there is an isomorphism of formed spaces 
\begin{equation}\label{eq:kerdel}
    f: (\ker \del, \la|_{\ker \del})\  \xrightarrow{\ \cong\ } \ (\tilde{M},\tilde{\la})
\end{equation}
so that they both have the same $\HH$-genus. (This is an algebraic version of the geometric observation in Remark \ref{rem:HorX}.)  
Indeed, the map $f$ can be defined 
by $f(u)=u -(a \bullet u) a$, with inverse $g: \tilde{M} \rightarrow \ker\del$ defined by $g(v)=v-\del(v) a$, where one checks that both maps preserve the forms.

Secondly, the unimodularity of $\{a \bullet -, \del\}$ in $M^\vee$ implies that there is $x \in M$ such that $\del x=0$ and $a \bullet x = 1$. 
We then claim that the forms $(\ker \tilde{\del},\tilde{\la}|_{\ker \tilde{\del}})$ and $(\langle a, x \rangle^\perp, \la|_{\langle a, x \rangle^\perp})$ are isomorphic, so they have the same $\HH$-genus. 
Indeed, the maps $f': \ker \tilde{\del} \rightarrow \langle a, x \rangle^\perp$, $f'(u)=u+(x \bullet u) a$, and $g':\langle a, x \rangle^\perp \rightarrow \ker \tilde{\del}$, $g'(v)=v-(\del v) a$, give a pair of inverse isomorphisms preserving the forms. 

Since $\langle a, x \rangle \subset M$ is isomorphic to $\HH$ then Proposition \ref{prop:properties hyperbolics}(i) implies that $g_{\HH}(\langle a, x \rangle^\perp,\la|_{\langle a, x \rangle^\perp})=g_{\HH}(M,\la)-1$, which finishes the proof. 
\end{proof}

\subsection{Disordered arc complexes}

Geometric arcs in a surface with the same endpoints $b_0,b_1$ in the boundary and with disjoint interiors, are naturally ordered at $b_0$ and $b_1$. This leads to two natural subcomplexes of the complex of all (non-separating) collections of such arcs: the one where the orderings of the arcs at $b_0$ and $b_1$ agree, and the one where the ordering is reversed. The latter is the first one that was considered in the literature, and it goes under the name the {\em ordered arc complex}. The first one is the one whose algebraic version will be relevant to us, and, to distinguish it from its ``opposite" version, it goes under the name {\em disordered arc complex}. (See \cite[Sec 2]{HVW}). We define now its algebraic analogue:

\begin{Def}\label{def:disordered}
  The {\em algebraic disordered arc complex} $\D(M,\la,\del)$ is the simplicial subcomplex of $\B(M,\la,\del)$,  with the same vertices and where $\{a_0,\dots,a_p\}$ forms a $p$-simplex in $\D(M,\la,\del)$ if it is a $p$-simplex of $\B(M,\la,\del)$ and its vertices can be ordered in such a way that $a_i\bullet a_j=1$ for all $i<j$. 

  More generally, for $\s=\langle a_0,\dots,a_p\rangle$ a simplex of $\B(M,\la,\del)$, we define $\D_{<\s}(M,\la,\del)$ to be the simplicial subcomplex of $\link_{\B(M,\la,\del)}(\sigma)$ on the vertices $b$ such that $b\bullet a_i=1$ for all $i=0,\dots, p$, with $b_0,\dots,b_q$ forming a $p$-simplex if they can be ordered in such a way that $b_i\bullet b_j=1$ for all $i<j$. 
 \end{Def} 

Note that the vertices of a simplex of  $\D(M,\la,\del)$ are canonically ordered if $char(R) \neq 2$, so the complex is actually naturally a subcomplex of $\B(M,\la,\del)^{ord}$ in that case. Also, a vertex $b$ in the link of a simplex $\s$ in $\D(M,\la,\del)$ will have a position with respect to $\s$ in that case; as apparent in the above definition, we will be particularly interested in vertices ``below $\s$", i.e.~such that $b\bullet a_i=1$ for all vertices $a_i$ of $\s$. These different positions of vertices in the link do not exist in characteristic 2, making the characteristic 2 case actually simpler, as we will see in the proof of the theorem below.

\begin{lem}\label{lem:Dconnect}
Let $\s$ be a simplex of $\B(M,\la,\del)$.  
Then $\D_{<\s}(M,\la,\del) \cong \D(M \minus \s, \la|_{M \minus \s},\del|_{M \minus \s})$.  
\end{lem}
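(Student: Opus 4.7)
The plan is to exhibit the isomorphism as the identity on underlying elements: each vertex $b$ of $\D_{<\s}(M,\la,\del)$ is sent to $b$ itself, now viewed as a vertex of $\D(M\minus\s, \la|_{M\minus\s}, \del|_{M\minus\s})$.

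First I would verify this is well-defined and bijective on vertices. A vertex $b$ of $\D_{<\s}$ is an arc of $M$ (so $\del(b)=1$) with $b \bullet a_i = 1$ for all $i$. Hence $\la'(a_i, b) = a_i \bullet b + \del(a_i)\del(b) = -1 + 1 = 0$, which shows $b \in M\minus\s$; combined with $\del|_{M\minus\s}(b) = 1$, this makes $b$ an arc in the cut formed space. Conversely, any arc $b \in M\minus\s$ automatically satisfies $a_i \bullet b = -\del(b) = -1$, hence $b \bullet a_i = 1$, so the correspondence on vertex sets is bijective.

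The main step is to match the simplex structures. For $b_0,\dots,b_q$ arcs of $M\minus\s$, I would show that
\begin{equation*}
\{a_0\bullet-,\dots,a_p\bullet-,b_0\bullet-,\dots,b_q\bullet-,\del\} \text{ is unimodular in } M^\vee
\end{equation*}
if and only if
\begin{equation*}
\{b_0\bullet-|_{M\minus\s},\dots,b_q\bullet-|_{M\minus\s},\del|_{M\minus\s}\} \text{ is unimodular in } (M\minus\s)^\vee.
\end{equation*}
The key input is that $\s \in \B(M,\la,\del)$, so there are already dual elements $v_0,\dots,v_p,w \in M$ for the subsystem $\{a_i\bullet-,\del\}$. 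For the forward direction, given duals $v_i, u_j, w$ for the full system in $M$, the elements $u_j$ automatically lie in $M\minus\s$ (since $a_i\bullet u_j = 0 = -\del u_j$), while the modified element $\tilde w := w - \sum_i v_i$ lies in $M\minus\s$ and satisfies $\del\tilde w = 1$, $b_k\bullet\tilde w = 0$. For the reverse direction, the duals $u_j, u_\del \in M\minus\s$ serve as their own duals in $M$, and the modifications $\tilde v_i := v_i - \sum_k (b_k\bullet v_i)\, u_k$ and $\tilde w := w - \sum_k (b_k\bullet w)\, u_k$ provide duals for $a_i\bullet-$ and $\del$ in $M$ that are orthogonal to each $b_k\bullet-$.

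The disordering condition $b_i\bullet b_j = 1$ for $i<j$ depends only on the underlying elements and so is identical on both sides. Combined with the previous step this gives the desired isomorphism of simplicial complexes. I expect the main obstacle to be the bookkeeping in the unimodularity equivalence, but once the explicit dual-element modifications above are written down the verification is direct.
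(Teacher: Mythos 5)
Your proposal is correct and follows essentially the same route as the paper: the isomorphism is the identity on underlying arcs, and the content is showing that the unimodularity condition in $M^\vee$ for the augmented sequence transfers to the unimodularity condition in $(M\minus\sigma)^\vee$ and back, using that $\sigma$ is itself a simplex of $\B(M,\la,\del)$. The paper states these two implications without writing down the dual elements; your explicit modifications $\tilde w = w - \sum_i v_i$ and $\tilde v_i = v_i - \sum_k (b_k\bullet v_i)u_k$ are a valid way of making that verification concrete.
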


\begin{proof}
    We will check both inclusions. Let us write $\sigma=(a_0,\dots,a_p)$. 
    
    Let $\tau=(b_0,\dots,b_q)$ be a $q$-simplex in $\D_{<\sigma}(M,\la,\del)$, then $(b_0,\dots,b_q,a_0,\dots,a_p)$ is a $(p+q+1)$-simplex in $\B(M,\la,\del)$. 
    By definition, $b_i \bullet a_j=1=\del(b_i)$ for all $i,j$ so $b_i \in M \minus \sigma$ is an arc in the cut formed space for all $i$. 
    Unimodularity of $\{\del,b_0\bullet-,\dots,b_q\bullet-,a_0\bullet-,\dots,a_p\bullet-\}$ in $M^\vee$ implies unimodularity of $\{\del|_{M \minus \sigma},b_0\bullet-,\dots,b_q\bullet-\}$ in $(M \minus \sigma)^\vee$. 
    Thus, $\{b_0,\dots,b_q\}$ is a $q$-simplex in $\B(M \minus \sigma,\la|_{M \minus \sigma},\del|_{M \minus \sigma})$. 
    Finally, the condition $b_i \bullet b_j=1$ for $i<j$ shows that $\tau$ is a $q$-simplex in $\D(M \minus \sigma,\la|_{M \minus \sigma},\del|_{M \minus \sigma})$.

    Conversely, if $\tau=(b_0,\dots,b_q)$ is a $q$-simplex in $\D(M \minus \sigma,\la|_{M \minus \sigma},\del|_{M \minus \sigma})$ then $\del(b_i)=1$ for all $i$, so each $b_i \in M$ is an arc too, and $\{\del|_{M \minus \sigma}, b_0\bullet-,\cdots, b_q\bullet-\}$ is unimodular in $(M\minus \sigma)^\vee$, which implies unimodularity of $\{\del,b_0\bullet-,\cdots,b_q\bullet-,a_0\bullet-,\cdots,a_p\bullet-\}$ in $M^\vee$ since $\{\del,a_0\bullet-,\cdots,a_p\bullet-\}$ is unimodular in $M^\vee$ by definition. 
    Finally, condition $b_i \in M \minus \sigma$ says $b_i \bullet a_j=\del(b_i)=1$ for all $i,j$, Thus, $\tau$ is $q$-simplex in $\D_{<\sigma}(M,\la,\del)$, as required.
\end{proof}

\begin{thm} \label{thm: connectivity D}
  Let $(M,\la,\del) \in \Fd$.  The complex $\D(M,\la,\del)$ is 
  $\frac{g_X(M,\la,\del)-2usr(R)-6}{3}$-connected. If $R$ is a PID, the connectivity bound can be improved to $\frac{g_X(M,\la,\del)-5}{3}$-connected. 
\end{thm}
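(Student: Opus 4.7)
The plan is to proceed by induction on $n = g_X(M,\la,\del)$, showing that for $k \le \frac{n - c}{3}$ (with $c = 2usr(R) + 6$, or $c = 5$ in the PID case) every map $S^k \to \D(M,\la,\del)$ extends to a disk. The base case, when the claimed connectivity is $-1$, reduces to showing $\D$ is non-empty; this follows from $\B$ being non-empty for $n \geq 1$ (Corollary~\ref{cor: connectivity B}), since any non-separating arc is a vertex of $\D$.

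For the inductive step, given $f: S^k \to \D(M,\la,\del)$, the strategy is to first extend $f$ to a map $\bar f: D^{k+1} \to \B(M,\la,\del)$ using the connectivity from Corollary~\ref{cor: connectivity B}. Since $sr(R) \le usr(R)$, the complex $\B$ is $(n - usr(R) - 3)$-connected (or $(n - sr(R) - 3)$-connected in the PID case), and one checks that for $c$ as above and $k \le \frac{n-c}{3}$ one has $k+1 \le n - usr(R) - 3$, so the extension exists. After a simplicial approximation we may assume $\bar f$ is simplicial on some triangulation of $D^{k+1}$. Call a simplex $\s$ in the image \emph{bad} if $\s \in \B$ but $\s \notin \D$, i.e., its vertex set cannot be ordered so that $a_i \bullet a_j = 1$ for $i<j$.

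The bad simplices are removed one at a time via a replacement procedure modeled on the geometric argument of \cite{HVW}: for a bad simplex $\s$ of dimension $p$ appearing in $\bar f$, one looks at $\link_{\B}(\s)$ and aims to modify $\bar f$ on $\St(\s)$ by pushing it into the subcomplex of $\link_\B(\s)$ containing only arcs $b$ with $b \bullet a_i = 1$ for all vertices $a_i$ of $\s$, which is precisely $\D_{<\s}$. By Lemma~\ref{lem:Dconnect}, $\D_{<\s} \cong \D(M \setminus \s, \la|_{M \setminus \s}, \del|_{M \setminus \s})$, and by Lemma~\ref{lem:cut form properties}(ii) together with the hypothesis, $g_X(M \setminus \s) \geq n - (2p+1)$; the cutting lemma applies because $n \geq c + 3(k+1) \geq 2usr(R) + 2p + 4$ once $p \leq k+1$. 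By the inductive hypothesis, $\D_{<\s}$ is then $\frac{n - (2p+1) - c}{3}$-connected, and for the replacement to succeed we need this to be $\geq k - p$, which rearranges to $n - c \geq 3k + 1 - p$, holding for all $p \geq 0$ when $k \le \frac{n-c}{3}$. This is where the slope $2/3$ emerges: each $p$-dimensional bad simplex loses $(2p+1)$ in arc-genus but only requires $(k-p)$-connectivity of its link, creating the favourable linear balance $3p - 2p = p$ in our favour.

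The principal obstacle is formulating the bad-simplex removal procedure cleanly. One has to order bad simplices by (reverse) dimension, verify that after replacing a bad top-dimensional $\s$ every newly introduced simplex is either good or a proper face of a previously-handled bad simplex, and confirm that the resulting map still agrees with $f$ on $\partial D^{k+1}$. In characteristic $\ne 2$, the canonical ordering of vertices in a disordered simplex makes the combinatorics cleaner (one may work inside $\B^{ord}$); in characteristic $2$, the absence of distinguished orderings actually simplifies matters, as there is no ``wrong order'' condition to enforce beyond $a_i \bullet a_j = 1$. In both cases, the technical heart is showing that modifying $\bar f$ on the star of a bad simplex using a nullhomotopy in $\D_{<\s}$ strictly decreases the number of bad simplices of maximal dimension, allowing induction on the pair (maximal bad-dimension, number of bad simplices of that dimension) to terminate with an extension into $\D$.
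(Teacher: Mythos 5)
Your overall strategy matches the paper's: extend the boundary map into $\B$ (or $\B^{ord}$), approximate simplicially, remove bad simplices of highest dimension by retracting into $\D_{<\s}\cong\D(M\setminus\s)$, and run the arithmetic on arc-genus via Lemma~\ref{lem:cut form properties}(ii) to see that the local replacements fit. The choice to induct on $g_X$ rather than $\rk(M)$ (as the paper does) is harmless, since cutting a $p'$-simplex forces $g_X(M\setminus\s)\le g_X(M)-p'-1<g_X(M)$.

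The genuine gap is in your definition of ``bad.'' Declaring a simplex bad if it lies in $\B$ but not in $\D$ is too coarse for the descent to terminate. After replacing $\hat f$ on $\St(\s)$ by a cone $F*\hat f|_{\del\s}$ with $F$ mapping into $\D_{<\s}$, the new simplices have the form $(a'_0,\dots,a'_q,a_{i_1},\dots,a_{i_r})$ with $a'_t\in M\setminus\s$ and $a_{i_j}$ vertices of $\del\s$. All cross-terms $a'_t\bullet a_{i_j}$ and all $a'_s\bullet a'_t$ ($s<t$) are $1$, but any pair $a_{i_j}\bullet a_{i_{j'}}\ne 1$ inherited from the original bad simplex still sits inside the new simplex. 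So a new $p$-simplex of the form $(a'_0,a_{i_1},\dots,a_{i_p})$ can perfectly well fail to lie in $\D$, i.e.\ be ``bad'' in your sense, and your double induction doesn't obviously close. The paper avoids this by a more refined definition: for $\operatorname{char}(R)\ne 2$ it declares $\s$ bad if in the ordered image $(a_0,\dots,a_{p'})$ some $j>0$ has $a_0\bullet a_j\ne 1$ (a condition on the \emph{first} vertex only), and it orders the replacement so that vertices from $\D_{<\s}$ come before those from $\del\s$; then any new simplex touching $D^{k-p+1}$ has first vertex $a'_0$ with all products $1$, hence is not bad, and simplices missing $D^{k-p+1}$ lie in $\del\s$ and have dimension $<p$. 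For $\operatorname{char}(R)=2$, it instead requires \emph{every} vertex to have a bad partner, so a simplex containing some $a'_t$ (which has no bad partner) is again not bad. Both refinements come with a ``density'' check (a non-disordered simplex has a bad face) to ensure that eliminating bad simplices actually lands the whole map in $\D$. Without one of these refinements the termination claim at the heart of your proposal is not established.
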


The idea of the proof is to deduce the connectivity of the disordered arc complex $\D(M,\la,\del)$ from that of the simplicial complex $\B(M,\la,\del)$. When the characteristic of $R$ is not 2, we will actually work with the associated semi-simplicial set $\B(M,\la,\del)^{ord}$. We will use a ``bad simplex argument", appropriately adapted to order complexes of simplicial complexes in the characteristic not 2 case; see Remark~\ref{rem:bad} for general comments about the generalization from simplicial complexes to such semi-simplicial sets,  in the language of the general framework  given in \cite[Sec 2.1]{HatVogTheter}. 

\begin{proof}
Let $A(R)=5$ if $R$ is a PID and $A(R)=2usr(R)+6$ otherwise. So we want to show that $\D(M,\la,\del)$ is 
  $\frac{g_X(M,\la,\del)-A(R)}{3}$-connected. 
Note that $A(R) \ge sr(R)+3$,  since $sr(R)=2$ when $R$ is a PID, and  since $2usr(R)+6 >  sr(R)+3$ in the general case. 

\smallskip

When $g_X(M,\la,\del) \leq A(R)-4$ the result is vacuously true. 
When $g_X(M,\la,\del) \in \{A(R)-3,A(R)-2,A(R)-1\}$ we just need to show that $\D(M,\la,\del) \neq \emptyset$, which is the case since $A(R) \ge 5$ (in both cases) so we can pick a split $X^{\# 2}$ summand in $M$ by Lemma \ref{lem:splitH}(ii) and the generator of the first $X$--summand will be a non-separating arc in $M$, and hence also a vertex of $\D(M,\la,\del)$. 
 
Thus, we can assume without lost of generality that 
\begin{equation}\label{eqCR}
    g_X(M,\la,\del) \geq A(R)\geq sr(R)+3.
\end{equation}
We will prove the result by induction on $\rk(M)$, noting that $\rk(M) \geq g_X(M,\la,\del)$. 

\smallskip

Let $f:S^k \rightarrow \D(M,\la,\del)$, $k \leq \frac{g_X(M,\la,\del)-A(R)}{3}$.  We want to show that $f$ is null-homotopic. We will start with the case of $\operatorname{char}(R)\neq 2$. 
The inequalities \eqref{eqCR} give that 
$$k \leq \frac{g_X(M,\la,\del)-A(R)}{3}\leq g_X(M,\la,\del)-A(R)  \le g_X(M,\la,\del)-sr(R)-3.$$ 
Thus, $\B(M,\la,\del)^{ord}$ is $k$-connected by Corollary~\ref{cor: connectivity B}. 
Thus we have a commutative diagram
\begin{align}
\xymatrix{S^k \ar[r]^-{f}\ar[d]&\D(M,\la,\del) \ar[d]\\
  D^{k+1}\ar[r]^-{\hat{f}} \ar@{-->}[ur] & \B(M,\la,\del)^{ord}}
\end{align}
where the vertical maps are the obvious inclusions. 

By Proposition~\ref{prop:approx}, we can approximate the pair $(f,\hat f)$ by a simplicial map, with respect to some PL approximation of $D^{k+1}$.
%
%
We will inductively replace $\hat{f}$ until its image lies inside $\D(M,\la,\del)$, hence producing the required null-homotopy.

We say that a $p$-simplex $\sigma$ in $D^{k+1}$ is \textit{bad} if $\hat{f}(\sigma)=(a_0,\cdots,a_{p'})$ and there exists some $j>0$ with $a_0 \bullet a_j \neq 1$.

Observe that by definition vertices are never bad. In fact, any bad $p$-simplex satisfies $p\ge p' \geq 1$. 
Note also that the condition of being bad is ``dense'', in the sense that if $\hat{f}(\sigma)$ does not lie in $\D(M,\la,\del)$ then $\sigma$ contains a bad simplex as a face. 
Therefore it suffices to give a procedure to modify $\hat{f}$ so that it has fewer bad simplices, and then apply this procedure until there are no bad simplices left. 

Let $\sigma$ be bad of maximal dimension $p$. Consider $\link(\sigma) \subset D^{k+1}$. We claim that 
$$\hat{f}(\link(\sigma)) \subset \D_{<\tilde{f}(\s)}(M,\la,\del) \subset \B(M,\la,\del)^{ord}$$ 
  
Indeed, let $\tau \in \link(\sigma)$, and let $\hat{f}(\tau)=(a_0',\cdots,a_q') \in \B(M,\la,\del)^{ord}$. If $a'_j\bullet a_i\neq 1$ for some $0\le i\le p'$ and $0\le j\le q$, with $v$ a vertex of $\tau$ mapped to $a'_j$, then $v*\s$ is a bad simplex of strictly larger dimension than $\s$, contradicting the maximality assumption. 
Likewise, we must have $a_i'\bullet a_j'=1$ for all $0\le i<j\le q$ as otherwise, if $v,w$ are vertices of $\tau$ mapping to $a_i'$ and $a_j'$ respectively, then $v*w*\s$ would be a bad simplex of larger dimension than $\s$, contradicting again the maximality. 
Given that $\tau*\s$ maps to a simplex of $\B(M,\la,\del)^{ord}$, we must have that $(a'_0,\dots,a'_q)$ forms a simplex of  $\D_{<\tilde{f}(\s)}(M,\la,\del)$. 

Let $M'=M \minus \hat{f}(\sigma)\le M$, $\la'=\la|_{M'}$ and $\del'=\del|_{M'}$, then $\D_{<\tilde f(\s)}(M,\la,\del) \cong \D(M',\la',\del')$ in a canonical way as in Lemma~\ref{lem:Dconnect}.
To apply induction, we need to check that $\rk(M')<\rk(M)$. 
Using \eqref{eqCR}, we have 
$$k\le \frac{g_X(M,\la,\del)-A(R)}{3} \le g_X(M,\la,\del)-A(R)\le g_X(M,\la,\del)-sr(R)-2.$$
Since $\sigma$ is a simplex in $D^{k+1}$, it follows that 
$$p'+1\le p+1\le k+2\le g_X(M,\la,\del)-sr(R)\le rk(M)- sr(R).$$
Hence Lemma \ref{lem:cut form properties}(i) gives that $\rk(M')=\rk(M)-(p'+1)<\rk(M)$, and so we can apply induction and conclude that $\D(M',\la',\del')$ is $\frac{g_X(M',\la',\del')-A(R)}{3}$-connected. 

We want to use the same lemma to estimate the genus of $M'$. 
If $R$ is not a PID then $A(R)=2usr(R)+6$ so
\begin{align*}
2p'+2\ \le \ 2k+4\  &\le \ \frac{2}{3}\Big(g_X(M,\la,\del)-A(R)\Big)+4 \\
&\le \ g_X(M,\la,\del)-A(R)+4\ =\ g_X(M,\la,\del)-2usr(R)-2    
\end{align*}
which gives that $g_X(M,\la,\del)\ge 2usr(R)+2p'+4$.  

Thus Lemma \ref{lem:cut form properties}(ii) also applies giving  
$$g_X(M',\la',\del') \geq g_X(M,\la,\del)-(2p'+1) \geq g_X(M,\la,\del)-3p',$$ since $p' \geq 1$ because $\sigma$ is bad. 
When $R$ is a PID then Lemma \ref{lem:cut form properties}(ii) applies unconditionally without an apriori bound on the arc genus getting the same end result.

    Together, this gives that $\D(M',\la',\del')$ is $(k-p') \geq (k-p)$-connected. Now $\link(\sigma) \cong S^{k-p}$ since  $\sigma$ is a simplex in the interior of $D^{k+1}$ because it is bad.   Hence the map 
    $\hat{f}|_{\link(\sigma)}$ extends to a map $F: D^{k-p+1} \rightarrow \D(M',\la',\del')$, which by Lemma~\ref{lem:Dconnect} we can view as a map $$F: D^{k-p+1} \rightarrow \D_{<\hat{f(}\s)}(M,\la,\del).$$
By PL approximation (that can be directly applied since $\D$ is a simplicial complex), we can assume that $F$ is simplicial with respect to a triangulation of $D^{k-p+1}$ extending that of $\link(\s)$.    

Now, we can modify the PL structure of $D^{k+1}$ inside $\St(\s)=\link(\s)*\s\cong D^{k-p+1}*\del \s$, and 
 we can replace $\hat{f}|_{St(\sigma)}$ with the map 
$$F*\hat{f}: D^{k-p+1}*\del \sigma \cong \St(\sigma) \rightarrow \B(M,\la,\del)^{ord}$$
where we order the vertices in the target by putting those coming from $D^{k-p+1}$ before the ones from $\del \s$.
Now we claim that we have reduced the number of bad simplices of maximal dimension $p$. 
Indeed, if $\tau=\tau_0*\tau_1 \in D^{k-p+1}*\del \sigma$ has image $(a_0',\dots,a_q',a_{i_1},\dots,a_{i_r})$, then by construction $a_i' \in M'$ for all $i$ and $\del(a_i')=1$ so $a_i' \bullet a_j=1$ for all $i,j$. 
Also, $a_0'\bullet a'_i=1$ for all $i>0$, so the only way this can be bad is if $\tau_0=\emptyset$. 
Thus, $\tau \subset \del \sigma$ has dimension less than $p$, as required. This finishes the proof in the case of $\operatorname{char}(R)\neq 2$. 

\medskip

If $\operatorname{char}(R)=2$, we start likewise with a map $f:S^k\to \D(M,\la,\del)$, but we now extend it to a map $\hat f: D^{k+1}\to \B(M,\la,\del)$, instead of to $\B^{ord}(M,\la,\del)$. Just as above, we approximate $\hat f$ by a simplicial map, from a PL triangulation of the disc, and we want to modify $\hat f$ inductively so that it eventually has image in $\D(M,\la,\del)$. 

We say that a $p$-simplex $\sigma$ in $D^{k+1}$ is \textit{bad} in the $\operatorname{char}(R)=2$ case if $\hat{f}(\sigma)=\langle a_0,\cdots,a_{p'}\rangle$ and for every $i=0,\dots,p'$, there exists $j\neq i$ with $a_i \bullet a_j \neq 1$. Let $\sigma$ be a bad simplex of maximal dimension $p$.  We claim that we again have that 
$$\hat{f}(\link(\sigma)) \subset \D_{<\tilde{f}(\s)}(M,\la,\del) \subset \B(M,\la,\del).$$ Indeed, if $v$ is a vertex of the link, then we must have $v\bullet a_i=1$ for all $i=0,\dots,p'$, by maximality of $\s$, and likewise of $\langle v,w\rangle$ is a 1-simplex in the link, then we must have $v\bullet w=1$ by maximality of $\s$. The rest of the proof proceeds exactly as in the $\operatorname{char}(R)\neq 2$ case.  
\end{proof}

\begin{rem}[Bad simplex argument for semi-simplicial sets]\label{rem:bad}
We describe here how the proof to Theorem~\ref{thm: connectivity D}, in the characteristic not 2 case, can be interpreted as a quite direct adaptation to semi-simplicial sets of the bad simplex argument, as formalized in \cite[Sec 2.1]{HatVogTheter}. For brevity, we will write $\B(M)=\B(M,\la,\del)$ and $\D(M)=\D(M,\la,\del)$. 

The heart of the proof of the theorem is to modify a map $D^{k+1}\to \B(M)^{ord}$, from a PL triangulation of the disc to the ordered complex of $\B(M)$, so that its image in the end lands in the subcomplex $\D(M)$ of disordered arcs. To achieve this, we defined the notion of a ``bad" simplex: a simplex $(a_0,\dots,a_p)$ is bad if there is a $j>0$ such that $a_0\bullet a_j\neq 1$. 
Our definition of badness uses the ordering of the vertices, and hence does not make sense in $\B(M)$. This is the reason we work with $B(M)^{ord}$ rather than $\B(M)$, even though $\D(M)$ can also be considered as a subcomplex of the simplicial complex $\B(M)$. 

Our badness condition has the property that 
\begin{enumerate}
    \item Any simplex of $\B(M)^{ord}\setminus \D(M)$ has a bad face;
    \item If two faces of a simplex are bad, their join is also bad. 
\end{enumerate}
In \cite{HatVogTheter}, for a simplex $\s$ of the larger complex, $\B^{ord}(M)$ in our case, they call a simplex $\tau$ in its link {\em good for $\s$} if any face of $\tau*\s$ that is bad is a face of $\s$. In our case, we see that the simplices that are good for $\s$ will necessarily be part of the ``down-link" $\link_{<\s}$, the simplices $\tau$ in the link such that $\tau*\s$ is a simplex, with the vertices of $\tau$ placed before those of $\s$ in the ordering. In fact the subcomplex $G_\s$ of simplices that are good for $\s$ identifies with the complex $\D_{<\s}(M)$, that we showed in Lemma~\ref{lem:Dconnect} to be isomorphic to $\D(M\setminus\s)$. 

Following \cite[Prop 2.1]{HatVogTheter}, keeping in addition track of the ordering of the vertices, we can apply the bad simplex argument to show that $\D(M)$ is $n$-connected, deforming any disc of dimension $k+1\le n+1$, if we can show that $G_\s$ is $(n-p)$--connected for each bad $p$-simplex $\s$. Now this is done by induction, treating the first case by hand, and using our genus estimate for $M\setminus\s$. In our case, $n=\frac{g(M)-c}{3}$, where $c$ is a constant, and Lemma~\ref{lem:cut form properties} says that $g(M\setminus\s)\ge g(M)-(2p+1)$. The needed connectivity of $G_\s$ follows from the fact that $\frac{g(M)-(2p+1)-c}{3}\ge \frac{g(M)-3p-c}{3}=n-p$ for any $p\ge 1$, which is enough since there are no bad 0-simplices. 
\end{rem}

\section{Cancellation properties and destabilization complex}\label{sec:cancell}

In order to get the final homological stability results we need to compare the disordered arc complex to the canonically defined destabilization complex in \cite{RWW}. This comparison will use a cancellation property in the category $\Fd$, that we start by describing. 

\subsection{Cancellation}

In this section we prove a general cancellation result based on the dual unitary stable rank, with a proof using the connectivity of the disordered arc complex $\D$. In Appendix~\ref{appB}, we prove a stronger cancellation result in the case when the base ring $R$ is a PID, using instead a classification result for formed spaces with boundary that is only valid for PIDs, see Theorems~\ref{thm: cancellation} and~\ref{thm: classification}.

\begin{thm}[General cancellation] \label{thm: general cancellation}
    Let $(M_i,\la_i,\del_i) \in \Fd$ with $\del_i$ surjective 
    for $i=1,2$ and such that $$(M_1,\la_1,\del_1)\#X \cong (M_2,\la_2,\del_2)\# X.$$
    If $g_X(M_1,\la_1,\del_1) \ge 2usr(R)+5$ then $(M_1,\la_1,\del_1) \cong (M_2,\la_2,\del_2).$
\end{thm}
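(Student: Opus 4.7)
The plan is to set $W := M_1 \# X$ and realise both $M_1$ and $M_2$ as $\la'$--complements of arcs inside the same formed space $W$, after which the problem reduces to showing these two complements are isomorphic in $\Fd$. Concretely, I would let $a_1 \in W$ be the generator of the terminal $X$--factor of the decomposition $W = M_1 \# X$, and let $a_2 \in W$ be the image of the analogous generator of $M_2 \# X$ under the assumed isomorphism $W \cong M_2 \# X$. By Lemma~\ref{lem:splitH}(ii), each decomposition identifies $M_i$ with $W \minus \{a_i\}$ as an object of $\Fd$. Surjectivity of each $\del_i$ ensures that $\{\del,\,a_i\bullet -\}$ is unimodular in $W^\vee$ (with $a_i$ itself supplying a dual vector), so each $a_i$ is a non-separating arc and hence a vertex of $\D(W)$.

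Next, I would invoke the connectivity of the disordered arc complex. Since $g_X(W)\ge g_X(M_1)+1 \ge 2\,usr(R)+6$, Theorem~\ref{thm: connectivity D} implies that $\D(W)$ is $0$--connected, so I can choose a sequence of vertices $a_1 = b_0, b_1, \dots, b_k = a_2$ in $\D(W)$ whose consecutive pairs $\{b_j,b_{j+1}\}$ are $1$--simplices of $\D(W)$.

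The heart of the argument is the following \emph{edge lemma}: for any $1$--simplex $\{a,a'\}$ of $\D(W)$ one has $W \minus \{a\} \cong W \minus \{a'\}$ in $\Fd$. Reorder so that $a\bullet a'=1$; then the identities $\del(a)=\del(a')=1$, $\la(a,a)=\la(a',a')=0$ and $\la(a,a')=1$ show that $(a,a')$ extends to a morphism $X^{\# 2}\to W$ in $\Fd$, so Lemma~\ref{lem:splitH}(ii) yields a $\#$--splitting $W \cong N \# X^{\# 2}$ with $N = W\minus\{a,a'\}$. Setting $a'' := 2a-a'$ one checks directly that $\del(a'')=1$, $\la(a'',a'')=0$ and $\la(a'',a)=1$, so $(a'',a)$ is a second standard basis of the same $X^{\# 2}$ inside $W$. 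Associativity of $\#$ then gives two factorisations of $W$,
\[
W \ \cong\  N \# \langle a\rangle \# \langle a'\rangle \ \cong\  N \# \langle a''\rangle \# \langle a\rangle,
\]
and stripping the terminal $X$--factor from each of these yields $W\minus\{a'\} \cong N \# X \cong W\minus\{a\}$.

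Chaining the edge isomorphisms along the path would then give $M_1 \cong W\minus\{a_1\} \cong \cdots \cong W\minus\{a_2\} \cong M_2$, completing the proof. The main obstacle will be the edge lemma: beyond the routine check that $(a'',a)$ is a standard $X^{\# 2}$--basis (using the alternating property of $\la$), one must verify that the two identifications $W\minus\{a\} \cong N \# X$ and $W\minus\{a'\} \cong N \# X$ preserve the full $\Fd$--structure and not merely the underlying $R$--modules, i.e.\ that the cross--terms of $\la$ and $\del$ under each decomposition match the defining formula for $\#$ recalled in Section~\ref{sec: formedd}.
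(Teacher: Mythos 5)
Your proof is correct and follows essentially the same route as the paper: identify each $(M_i,\la_i,\del_i)$ with the cut $W\minus\{a_i\}$ of the common space $W=M_1\#X$, use $0$-connectivity of $\D(W)$ (Theorem~\ref{thm: connectivity D}) to join $a_1$ to $a_2$ by a chain of edges, and show that cuts along the two ends of a single edge are isomorphic. Your edge step --- producing a second $X^{\#2}$-decomposition $W\cong N\#\langle a''\rangle\#\langle a\rangle$ with $a''=2a-a'$ and the same $N=W\minus\{a,a'\}$ --- is an equivalent reformulation of the paper's argument, which instead applies the automorphism $\id_N\#\beta_{1,1}^{-1}$ of $W\cong N\# X^{\#2}$ sending $a$ to $a'$.
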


\begin{proof}
Let $(M,\la,\del)=(M_1,\la_1,\del_1)\#X$ and let $a_1, a_2$ denote the canonical generators of the $X$-summands under the identification 
$$(M,\la,\del)=(M_1,\la_1,\del_1)\#X\cong (M_2,\la_2,\del_2)\#X,$$
where we have fixed an isomorphism as in the statement. For $i=1,2$ we have that $(M_i,\la_i,\del_i)=(M \setminus a_i, \la|_{M \setminus a_i},\del|_{M \setminus a_i})$ by Lemma \ref{lem:splitH}(ii), and our assumption on $\del_i$ 
is equivalent to saying that the arcs $a_1,a_2$ are non-separating. 
The above cancellation property is thus equivalent to the fact that $\Aut_{\Fd}(M,\la,\del)$ acts transitively on the set of non-separating arcs in $(M,\la,\del)$.

We will now show that transitivity of the action follows from the connectivity of the complex $\D(M,\la,\del)$, which holds by Theorem~\ref{thm: connectivity D} since $g_X(M,\la,\del) \ge 2(usr(R)+1)+4$ by our assumption. 

Observe that the set of vertices of $\D(M,\la,\del)$ is precisely the set of non-separating arcs. Thus it suffices to show that if $a_1,a_2$ are connected by a sequences of edges in $\D(M,\la,\del)$, then there is an automorphism in $\Aut_{\Fd}(M,\la,\del)$ taking $a_1$ to $a_2$. 
Without loss of generality assume that $(a_1,a_2)$ is a $1$-simplex in $\D(M,\la,\del)$. Then it defines a morphism $(a_1,a_2): X^{\#2} \to (M,\la,\del)$, and by Lemma \ref{lem:splitH}(i)
 we have a splitting $(M,\la,\del)=(M',\la',\del')\# X^{\#2}$ where $M'=M \setminus (a_1,a_2)$ denotes the cut form. 
Now the braiding $\beta_{1,1}^{-1}$ on the last two coordinates defines such a morphism, finishing the proof. 
\end{proof}

In the language of \cite{RWW}, the above theorem says that the category $\Fd$ satisfies {\em local cancellation}, see Definition~1.9 in that paper. 

\begin{rem} \label{rem: csr}
One could define the  \textit{cancellation stable rank} $\Dr(A,R)$ of an $R$-module $A$ to be the smallest integer $k$ with the property that for any $N \in \Fd$ $$N\# X^{\# p+1}\cong A\# X^{\#n} \ \ \textrm{with}\ \ g_X(N)>0 \ \ \textrm{and}\ \ 0 \le p \le n-k \ \ \Longrightarrow \ \ N\cong A\# X^{\#n-p-1}.$$

In this language, Theorem \ref{thm: general cancellation} says that $\Dr(A,R) \le 2usr(R)+6$ for any $R$-module $A$. 
And Theorem \ref{thm: cancellation}, proved in the appendix, improves the above bound to $\Dr(A,R)=2$ when $R$ is a PID.

    Note that the connectivity of the destabilization complex is always related to a local cancellation property, see \cite[Lem 2.3]{RWW}. But here we do not know yet that the complex of disordered arc is (essentially) the destabilization complex in the sense of \cite{RWW}, and in fact we will use the cancellation result to show the equivalence between the two complexes. 
\end{rem}

\subsection{The destabilization complex}\label{sec:destabilization}

The monoidal category $\Fd$ of formed spaces with boundary introduced in Section~\ref{sec: formedd} has a subcategory $\FdX$, generated by the object $X=(D^2,0,\id)$, that is braided monoidal by Proposition~\ref{prop:beta}. The monoidal structure of the full category $\Fd$ then makes the classifying space $B\Fd$ into an  
 $E_1$-module over the $E_2$-algebra $B\FdX$ in the language of \cite{krannich}, see in particular Section 7 of that paper that gives this particular generalization of the set-up of \cite{RWW} needed here. 
 
 Given a choice of an object $A$ of $\Fd$, there is a {\em destabilization complex} $W_n(A,X)$ associated to stabilization by $X$, whose connectivity, under mild conditions, rules homological stability for adding copies of $X$. We describe the space (semi-simplicial set) $W_n(A,X)$ below, and relate it to the disordered arc complex $\D(A\# X^{\# n})$. Before doing so, we will modify the category $\Fd$ slightly, as in \cite[Rem 1.1]{RWW}, to force a stronger cancellation property than the one given by Theorems~\ref{thm: cancellation} and \ref{thm: general cancellation}, as the stronger cancellation property is needed to be able to deduce homological stability from the connectivity of $W_n(A,X)$.  This will have the effect of modifying $W_n(A,X)$ so that it is the correct complex of destabilization.

\smallskip

Define $\Fd^c(A,X)$ to be the category with one object $N_k:=A\# X^{\# k}$ for each $k\ge 0$,  and with the automorphisms groups $\Aut_{\Fd}(N_k)$ as only morphisms. Now $B\Fd^c(A,X)$ is again an $E_1$-module over the $E_2$--algebra $B\FdX$ via the sum $\#$, and it satisfies local cancellation by construction: For all $0\le p\le n-1$, 
$$N_k\# X^{\# p+1}\cong A\# X^{\#n} \Longrightarrow \ \ N_k\cong A\# X^{\#n-p-1}, \ \textrm{and in particular } k=n-p-1.$$

\medskip

 We are now ready to define the destabilization complex. 

\begin{Def}\cite[Def 7.5]{krannich}
    Let $A$ be a formed space with boundary, and let $n\ge 1$. The destabilization complex $W_n(A,X)$ is a semi-simplicial set  with set of $p$--simplices 
    $$W_n(A,X)_p=\{(N_{k},f) \ |\  N_k=A\# X^{\#k} \textrm{ and } f:N_{k}\# X^{\# p+1}\xrightarrow{\cong} A\# X^{\# n} \ \textrm{in}\ \Fd^c(A,X)\}/_\sim$$ 
    where $k=n-p-1$ is necessary for the isomorphism to exist as we just saw, and $(N_k,f)\sim (N_{k},f')$ if  $f'=f\circ (g\# \id)$ for some $g\in\Aut_{\Fd}(N_k)$. (The data of the {\em complement} $N_k$ is actually superfluous her, since $N_k=N_{n-p-1}$ is the only possibility.)  

The $i$th face map is obtained by adding a copy of $X$ to the complement $N_k$ and precomposing $f$ with an appropriate braiding to get a new pair $d_i(N_k,f)=(N_k\# X,d_if)=(N_{k+1},d_if)$ with 
$$d_if: N_k\# X\# X^{\# p}\xrightarrow{\id\# \beta_{X^{\# i},X}^{-1}\# \id} N_k\# X^{\# i}\# X \# X^{\# p-i}\xrightarrow{\ f\ } A\# X^{\# n}.$$
There is a simplicial action of $\Aut(A\# X^{\# n})$ on $W_n(A,X)$ by postcomposition. 
\end{Def}

Note that the action of $\Aut(A\# X^{\# n})$ on $W_n(A,X)$ is transitive on the set of $p$-simplices for all $p$ by construction. (This is in fact equivalent to the local cancellation property enforced on the category, see \cite[Thm 1.10(a)]{RWW}). Hence we have an identification 

$$W_n(A,X)_p\cong \Aut(A\# X^{\# n})/\Aut(A\# X^{\# n-p-1}).$$

\subsection{Disordered arcs as a destabilization complex}

Suppose $(M,\la,\del)\in \Fd$ has genus  $g_X(M,\la,
\del)=n$, so that we can write $M=A\# X^{\# n}$ for some $A\in \Fd$.  
We will now relate the disordered algebraic arc complex $\D(M,\la,\del)$ and the destabilization complex $W_n(A,X)$ just defined.

\medskip

Recall that the vertices of simplices of $\D(M,\la,\del)$ are canonically ordered when $\operatorname{char}(R)\neq 2$. Hence $\D(M,\la,\del)$  can be considered as a semi-simplicial set in that case, the $i$th face a simplex $\s=(a_0,\dots,a_p)$ being the simplex $d_i\s$ obtained from $\s$ by removing its $(i+1)$st vertex $a_i$. In the characteristic 2 case, $\D(M,\la,\del)$ cannot be identified with a semi-simplicial set, and we will instead compare $\D(M,\la,\del)$ to the simplicial complex $S_n(A,X)$ associated to $W_n(A,X)$. The complex $S_n(A,X)$ has the same vertices as $W_n(A,X)$, and vertices $v_0,\dots,v_p$ form a simplex in $S_n(A,X)$ if there exists a $p$-simplex of $W_n(A,X)$ having $v_0,\dots,v_p$ as its vertices. By \cite[Thm 2.10]{RWW}, under good conditions, the simplicial complex $S_n(A,X)$ is highly connected if and only if the semi-simplicial set $W_n(A,X)$ is highly connected, which implies that it is most often equivalent to work with the one or the other for the purpose of homological stability. (In this case, we will indeed be in the situation where $W_n(A,X)$ has $(p+1)!$ simplices for each $p$-simplex of $S_n(A,X)$, having one for each ordering of the vertices, and we could equivalently compare $\D^{ord}(M,\la,\del)$ with $W_n(A,X)$.) 

\begin{prop} \label{prop: D vs W}
Let $(M,\la,\del)\!\cong\!A\# X^{\# n}$ be a formed space with boundary with $g_X(M,\la,\del)\!\ge\!n$. 
\begin{enumerate}[(i)]
    \item If $\s \in \D(M,\la,\del)$ is a $p$-simplex then $M\cong M\minus \s \,\# \, X^{\# p+1}$. 
    \item $p$--simplices of  $\D(M,\la,\del)$ are in one-to-one correspondence with maps $f: X^{\# p+1}\to (M,\la,\del)$ such that $\del$ is still surjective on the cut form $M \minus \im(f)$.
    \item If $\operatorname{char}(R)\neq 2$, then 
    $$sk_{\leq n-2usr(R)-6} \D(M,\la,\del)\cong 
    sk_{\leq n-2usr(R)-6}(W_n(A,X)).$$.
    \item If $\operatorname{char}(R)=2$, then 
    $$sk_{\leq n-2usr(R)-6} \D(M,\la,\del)\cong 
    sk_{\leq n-2usr(R)-6}(S_n(A,X)).$$ 
\end{enumerate}
When $R$ is a PID, the isomorphisms in (iii) and (iv) hold for the $(n-2)$-skeleta. 
  \end{prop}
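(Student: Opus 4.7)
My plan is to prove the four parts in order, with parts (i) and (ii) giving the key bijection at the level of simplices, and (iii) and (iv) packaging this with cancellation to produce the promised isomorphism of (semi-)simplicial structures.

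For part (i), given a $p$-simplex $\s=(a_0,\dots,a_p)$ of $\D(M,\la,\del)$, I would define $f:X^{\#p+1}\to (M,\la,\del)$ by sending the $i$-th standard generator to $a_i$. Unwinding the inductive definition of $\#$ from Section~\ref{sec: formedd}, the form on $X^{\#p+1}$ in the standard basis has $\la(e_i,e_j)=1$ for $i<j$, $\la(e_i,e_i)=0$, and $\del(e_i)=1$, which matches the disordered condition and $\del(a_i)=1$ exactly. So $f$ is a morphism in $\Fd$, and Lemma~\ref{lem:splitH}(ii) gives $M\cong (M\minus\im(f))\#X^{\#p+1}=(M\minus\s)\#X^{\#p+1}$.

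For part (ii), one direction is just part (i): a $p$-simplex of $\D$ gives a morphism $f:X^{\#p+1}\to M$ whose restriction to $M\minus\im(f)$ has surjective boundary, because the joint non-separating condition says $\{a_0\bullet-,\dots,a_p\bullet-,\del\}$ is unimodular in $M^\vee$, and after the splitting, unimodularity forces $\del|_{M\minus\im(f)}$ to be surjective (the image of $X^{\#p+1}$ under this map has rank $p+1$, while the complement contributes only through $\del|_{M\minus\im(f)}\cdot(-1,\dots,-1,1)$, as computed from $\la'|_{M\minus\im(f)}=0$). Conversely, a map $f:X^{\#p+1}\to M$ with $\del$ surjective on $M\minus\im(f)$ yields arcs $a_i=f(e_i)$ satisfying $a_i\bullet a_j=1$ for $i<j$, and the same matrix computation (run in reverse) verifies joint non-separating, so $(a_0,\dots,a_p)$ is a $p$-simplex of $\D$. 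The bijection is clear.

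For parts (iii) and (iv), the main input is cancellation: in the range $p\le n-2\,usr(R)-6$ (respectively $p\le n-2$ for a PID), we have $k=n-p-1\ge 2\,usr(R)+5$ (resp.~$k\ge 1$), so any formed space $N_k$ with $N_k\#X^{\#p+1}\cong A\#X^{\#n}$ and $g_X(N_k)>0$ must satisfy $N_k\cong A\#X^{\#k}$ by Remark~\ref{rem: csr} (which uses Theorem~\ref{thm: general cancellation} in general and Theorem~\ref{thm: cancellation} for a PID). Combining this with (ii) gives the bijection on $p$-simplices: a simplex $[(N_k,f)]\in W_n(A,X)_p$ restricts to $f|_{X^{\#p+1}}$, which is a $p$-simplex of $\D$; conversely a $p$-simplex of $\D$ produces $f:X^{\#p+1}\to M$, and cancellation upgrades the splitting $M\cong(M\minus\im(f))\#X^{\#p+1}$ to an isomorphism $A\#X^{\#k}\#X^{\#p+1}\cong M$, well-defined up to the equivalence of $W_n(A,X)$ by Lemma~\ref{lem: enhanced splitting}. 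In $\operatorname{char}(R)\ne 2$ simplices of $\D$ carry a canonical ordering, so this bijection promotes to an isomorphism of semi-simplicial sets with $W_n(A,X)$, giving (iii). In $\operatorname{char}(R)=2$ we lose the ordering and only get an isomorphism of underlying simplicial complexes, matching $S_n(A,X)$, giving (iv).

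The main obstacle I expect is verifying that the face maps match: removing $a_i$ from the ordered tuple $(a_0,\dots,a_p)$ must correspond to the face map $d_i$ on $W_n(A,X)$, which precomposes $f$ with $\id\#\beta_{X^{\#i},X}^{-1}\#\id$ to absorb the $(i+1)$-st $X$-summand into the complement. This should be a direct consequence of the explicit form of $\beta$ from Proposition~\ref{prop:beta} together with Lemma~\ref{lem: enhanced splitting}, since after the braiding the new restriction to $X^{\#p}$ exactly picks out the arcs $(a_0,\dots,\hat a_i,\dots,a_p)$.
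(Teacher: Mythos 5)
Your overall architecture matches the paper's: parts (i) and (ii) build a bijection at the level of simplices via $\s\mapsto f_\s$ and Lemma~\ref{lem:splitH}(ii); the forgetful map $W_n(A,X)\to \D$ (or $S_n(A,X)\to\D$ in characteristic 2) is injective by Lemma~\ref{lem: enhanced splitting} and surjective in the stated range by the cancellation Theorems~\ref{thm: general cancellation}/\ref{thm: cancellation}. That is precisely how the paper proceeds. Two places deserve repair, though.

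In the argument for part (ii), the parenthetical claim ``as computed from $\la'|_{M\minus\im(f)}=0$'' is not right as written: $\la'$ restricted to $M\minus\im(f)$ is certainly not zero (it is whatever curved form $N$ carries); what is true, by the definition of $M\minus\im(f)$, is that $\la'$ vanishes as a pairing between $\im(f)$ and $M\minus\im(f)$. More importantly, you leave the key implication (joint non-separating $\Rightarrow$ $\del$ surjective on $N$, and the converse) at the level of a gesture. The paper's converse is a concrete computation: picking $a\in M\minus\im(f)$ with $\del(a)=1$, evaluate $\{a_0\bullet-,\dots,a_p\bullet-,\del\}$ on $(a_0,\dots,a_p,a)$ and check the resulting $(p+2)\times(p+2)$ matrix is invertible (it has determinant $\pm 1$). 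The forward direction follows by passing to the equivalent unimodular set $\{\del+a_0\bullet-,\dots,\del+a_p\bullet-,\del\}$ and noting that the last dual element $w'$ of a dual basis lies in $M\minus\s$ with $\del(w')=1$. Either of these can and should replace the hand-waving.

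The more substantive gap is your last paragraph: you assert that the face maps of $W_n(A,X)$, which involve precomposition with $\id\#\beta_{X^{\#i},X}^{-1}\#\id$, correspond on the $\D$-side to deleting the $i$-th arc, and claim this ``should be a direct consequence of the explicit form of $\beta$.'' That computation is not trivial from the matrix of $\beta_{n,m}$ alone, and you do not carry it out. The paper sidesteps the linear algebra by invoking the monoidal functor $\Hd:\M_2\to\Fd$ of Proposition~\ref{prop:Hdmonoidal}: the braiding $\beta$ is by definition the image of the geometric twist $T^{-1}$, and the last part of the proof of \cite[Prop 4.4]{HVW} already records the effect of the block braid on the standard arcs $\rho_j$, whose homology classes are the generators $e_j$. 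If you want a purely algebraic verification instead, you must actually trace $\beta_{X^{\#i},X}^{-1}$ through the identification $(N_k\#X)\#X^{\#p}\cong N_k\#X^{\#p+1}$ and check that the restriction of $d_if$ to the last $X^{\#p}$ factor picks out exactly $(a_0,\dots,\widehat{a_i},\dots,a_p)$; this is doable but is a genuine computation, not a formality.
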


  We could have changed the definition of $\D$ to make the isomorphism hold for the full semi-simplicial set/simplicial complex instead of for the stated skeleta, but this would have no effect on our main results. 
  
\begin{proof}
Without loss of generality, we can assume that $M=A\# X^{\# n}$. 

\noindent
{\bf Part(i):} A $p$-simplex $\sigma=(a_0,\dots,a_p)$ defines a morphism $f_\s:X^{\#p+1} \rightarrow (M,\la,\del)$ in $\Fd$ sending the $i$-th standard basis element of the underlying module $R^{p+1}$ to $a_i$, since the conditions $a_i\bullet a_j=1$ for $i<j$ and $\del(a_i)=1$ for being a $p$-simplex in $\D(M,\la,\del)$ precisely say that $\la$ and $\del$ restrict to $\la_{X^{p+1}}$ and $\del_{X^{p+1}}$ on the submodule generated by these arcs.
The result now follows from Lemma \ref{lem:splitH}(ii) since $M \minus \s=M\minus \im(f_\s)$.

\smallskip

\noindent
{\bf Part(ii):} We have already seen in the proof of Part (i) how to associate to a simplex $\s$ a map $f_\s:X^{\# p+1}\to M$. Now the non-separating condition implies that there is an element $a$ of $M\minus \s$ such that $\del(a)=1$, giving the required surjectivity. Conversely, if $f$ is a map as in the statement, then the evaluation of $f$ at the standard generators of $X^{\# p+1}$ gives a collection of arcs $a_i=f(e_{i+1})$ that satisfy $a_i\bullet a_j=1$ for all $i<j$, because $f$ respects the form. 
Moreover, by the surjectivity assumption we can find $a \in M$ such that $\del a=1$ and $a \in M \setminus \im(f)$, so $\{a_0\bullet-,\dots,a_p\bullet -,\del\}$ is unimodular by evaluating this collection of maps on the elements $a_0,\dots,a_p,a$ themselves and observing the corresponding matrix has rank $p+2$. 
Thus, $\sigma=(a_0,\dots,a_p)$ defines the required simplex with $f_\s=f$. 

\smallskip

\noindent
{\bf Part(iii):} There is a forgetful map $sk_{\le n-2}W_n(A,X)\to \D(M,\la,\del)$ taking a $p$--simplex $(N,f)=(N_{n-p-1},f)$ of $W_n(A,X)$, with $f:N\# X^{\# p+1}\to M$,  to the collection of arcs $(f(e_0),\dots,f(e_{p}))$ for $e_0,\dots,e_p$ the standard basis vectors of the $X^{\# p+1}$--summand in the source.  Here the non-separating condition for the arcs is guaranteed by the fact that $(N,f)$ is not a maximal simplex, so that $N=A\# X^{\# n-p-1}$ has at least one $X$--summand, where we observe that any simplex is the face of an $n-1$-simplex by using the transitivity of the action of $\Aut(M,\la,\del)$ on the simplices. 
This map is injective Lemma \ref{lem: enhanced splitting}. 
Surjectivity of the map when restricting both complexes to their $(n-2usr(R)-6)$-skeleta (resp.~$(n-2)$-skeleta in the PID case) follows from  cancellation: 
Let $\sigma= (a_0,\dots,a_p)$ be a $p$-simplex in $\D(M,\la,\del)$ with $p \le n-2usr(R)-6$ (resp.~$p\le n-2$). By Part(i) we have $M=A \# X^{\#n}\cong M \setminus \sigma \# X^{\#p+1}$, so by the cancellation Theorem~\ref{thm: general cancellation} (resp.~Theorem~\ref{thm: cancellation}), we have that $M \setminus \sigma \cong N_{n-p-1}$. 
Thus, we get $f: N_{n-p-1} \# X^{\#p+1} \xrightarrow{\cong} M$ taking values $a_0,\dots,a_p$ in the standard basis of $X^{\#p+1}$.

We are left to check that the map is simplicial. Because both semi-simplicial sets admit a simplicial action by $\Aut(A\# X^{\# n})$ that it transitive on $p$-simplices for every $p$, and the map is equivariant, it is enough to check that the map respects the face maps of our favorite $p$--simplex. Consider the simplex $(A\# X^{\# n-p-1},\id)$. Its image in $\D(M,\la,\del)$ is the collection of arcs defined by the last $p+1$ standard generators inside the submodule $X^{\# n}$, and the face maps in $\D$ are the forgetful maps, with $d_i$ forgetting the $i+1$st arc in that collection. We need to check that this corresponds to the face map in the source, defined using the braided structure. This comes down to the same computation as in the geometric case, since the braid action is geometric: by Proposition~\ref{prop:Hdmonoidal}, $X^{\# n}$ identifies with $\Hd(D^{\hash n})$ and the braiding is defined as the image of the geometric braiding in $D^{\hash n}$. The last part of the proof of \cite[Prop 4.4]{HVW} computes the effect of the block braid $\beta^{-1}_{X^{\# i},X}$ on the standard generators $\rho_{n-p-1},\dots,\rho_n$ whose homology class are, by definition the standard generators $e_{n-p-1},\dots,e_n$ of $X^{\# n}$. Just as in the geometric case, the conclusion of the computation is that the face map $d_i$ in the destabilization complex has the effect of forgetting $e_{n-p-1+i}$. 

\smallskip

\noindent
{\bf Part (iv):} We now assume that $\operatorname{char}(R)=2$. 
The forgetful map $W_n(A,X)\to \D(M,\la,\del)$ is still well-defined an surjective, but it now factors through $S_n(A,X)$, since, if different orderings of the same vertices define a simplex in $W_n(A,X)$, they will have the same image in $\D(M,\la,\del)$. The map $S_n(A,X)\to \D(M,\la,\del)$ is still surjective, and now it is also injective since if $(N,f)$ and $(N',f')$ have the same image in $\D(M,\la,\del)$, that means that we have an equality of sets $\{f(e_0),\dots,f(e_p)\}= \{f'(e_0),\dots,f'(e_p)\}$, which in turn means that $(N,f)$ and $(N',f')$ represent the same simplex in $S_n(A,X)$ since vertices of $S_n(A,X)$ are determined by their value on the corresponding generator $e_i$ in $X^{\# p+1}$. Finally the map is an equivariant simplicial map for the same reasons as in the characteristic not 2 case.
\end{proof}

\begin{cor} \label{cor: connectivity W}
Let $A\in \Fd$. The destabilization complex $W_n(X,A)$ is $\frac{n-2usr(R)-7}{3}$-connected. If $R$ is a PID, $W_n(X,A)$ is $\frac{n-5}{3}$-connected. 
\end{cor}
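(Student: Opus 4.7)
The plan is to combine the connectivity bound for the disordered arc complex from Theorem~\ref{thm: connectivity D} with the skeletal identification from Proposition~\ref{prop: D vs W}. Setting $M=A\# X^{\# n}$, we have $g_X(M,\la,\del)\ge n$, so Theorem~\ref{thm: connectivity D} gives that $\D(M,\la,\del)$ is $\tfrac{n-2usr(R)-6}{3}$-connected in general, and $\tfrac{n-5}{3}$-connected when $R$ is a PID.

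I would first treat the case $\operatorname{char}(R)\neq 2$. Here Proposition~\ref{prop: D vs W}(iii) identifies $W_n(A,X)$ and $\D(M,\la,\del)$ as semi-simplicial sets on their $(n-2usr(R)-6)$-skeleta (and on their $(n-2)$-skeleta when $R$ is a PID). A standard sphere-filling argument then transfers the connectivity: any map $f\colon S^k\to W_n(A,X)$ with $k\le \tfrac{n-2usr(R)-7}{3}$ can be approximated simplicially so that it lands in the common skeleton, a null-homotopy $\bar f\colon D^{k+1}\to \D(M,\la,\del)$ is produced using the connectivity of $\D$, and the bounds $k\le \tfrac{n-2usr(R)-6}{3}$ and $k+1\le n-2usr(R)-6$ together ensure that both $f$ and $\bar f$ lie in the range where the skeletal isomorphism holds, so $\bar f$ transfers back to a null-homotopy in $W_n(A,X)$. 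The PID case is handled identically, using the improved skeletal range and the sharper connectivity bound for $\D$.

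In the $\operatorname{char}(R)=2$ case, Proposition~\ref{prop: D vs W}(iv) only provides a skeletal identification between $\D(M,\la,\del)$ and the associated simplicial complex $S_n(A,X)$, so the same sphere-filling argument first yields the desired connectivity for $S_n(A,X)$. To transfer the connectivity from $S_n(A,X)$ to $W_n(A,X)$, I would invoke \cite[Thm 2.10]{RWW}, which equates the connectivities of a semi-simplicial set and its associated simplicial complex in the presence of the appropriate local cancellation condition; this condition holds for $W_n(A,X)$ by construction, since $\Fd^c(A,X)$ was defined in Section~\ref{sec:destabilization} precisely to enforce strict local cancellation.

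Since the substantive content has already been established in Theorem~\ref{thm: connectivity D} and Proposition~\ref{prop: D vs W}, there is no real obstacle. The only delicate point is the bookkeeping that ensures both the dimension of the test sphere and of its null-homotopy lie within the range of the skeletal identification, which accounts for the slight loss from $-6$ to $-7$ in the stated connectivity bound compared to the raw bound inherited from $\D(M,\la,\del)$.
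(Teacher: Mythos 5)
Your proof is correct and follows essentially the same route as the paper: identify the skeleta via Proposition~\ref{prop: D vs W}, invoke Theorem~\ref{thm: connectivity D} for the connectivity of $\D$, and check the bookkeeping constraint that both the test sphere and its filling disk lie within the range of the skeletal identification (which amounts to $k\le m-1$ for $m$ the skeletal threshold), invoking \cite[Thm 2.10]{RWW} to pass from $S_n(A,X)$ to $W_n(A,X)$ in characteristic $2$. The only thing you elide compared to the paper's write-up is the explicit handling of the degenerate cases $k\le -1$ (where the claim reduces to non-emptiness of $W_n(A,X)$, which holds trivially), but this is a formality and your argument is otherwise sound.
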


\begin{proof}
Let $k=\lfloor\frac{n-2usr(R)-7}{3}\rfloor$ (resp.~$k=\lfloor\frac{n-5}{3}\rfloor$ if $R$ is a PID). Note first that the result holds trivially when $k\le -2$. For $k=-1$, we need to check that $W_n(A,X)$ is non-empty, which follows in both cases from the fact that $n\ge 2$ under this assumption. So we can assume $k\ge 0$. 

By Theorem \ref{thm: connectivity D} we know that $\D(A \# X^{\#n})$ is 
$k'$--connected for $k'=\lfloor\frac{n-2usr(R)-6}{3}\rfloor$ (or $k'=\lfloor\frac{n-5}{3}\rfloor$ if $R$ is a PID), and by Proposition~\ref{prop: D vs W}, the complexes $\D(A \# X^{\#n})$ and $W_n(A,X)$ (respectively $S_n(A,X)$ in characteristic 2) share the same $m$-skeleton for $m=n-2usr(R)-6$ (or $m=n-2$ if $R$ is a PID). To prove the result, it is enough to check that  both $k\le k'$, which is immediate, and $k\le m-1$, where in the characteristic 2 case, we also use \cite[Theorem 2.10]{RWW} to get the connectivity of $W_n(A,X)$ from that of $S_n(A,X)$.

In the non-PID case, $m=n-2usr(R)-6$ and $k=\lfloor\frac{m-1}{3}\rfloor$. So we need to check that $\lfloor\frac{m-1}{3}\rfloor\le m-1$, which holds as long as $m\ge 0$. This covers the case $\lfloor\frac{m-1}{3}\rfloor\ge 0$. 

If $R$ is a PID, we have $m=n-2$ and $k=\lfloor\frac{m-3}{3}\rfloor$, and the inequality $\lfloor\frac{m-3}{3}\rfloor\le m-1$ holds as long as $m\ge -1$, which covers the case $\lfloor\frac{m-3}{3}\rfloor\ge 0$ as needed. 
\end{proof}

\begin{rem}[The destabilization complex in terms of matrices]
When $A=0$, the underlying module of $A\# X^{\# n}$ is $R^n$, and hence the group $\Aut(X^{\# n})$ can be considered as a subgroup of $\GL_n(R)$. Given that the action of $\Aut(X^{\# n})$ on the set of $p$-simplices $W_n(0,X)_p$ is transitive for every $p$, it is possible to use matrices to describe the semi-simplicial set $W_n(0,X)$. We explain here that simplices in $W_n(0,X)$ correspond to collections of column-vectors in matrices in $\Aut(X^{\# n})$. This is the point of view taken in the article \cite{MPPRW}, see Lemma 3.14 in that paper. 

Indeed, as in Part (iii) in the proof of Proposition~\ref{prop: D vs W}, we can consider the forgetful map $W_n(0,X)_p\to (R^n)^{p+1}$, taking a $p$-simplex $(N,f)$, with $f:N\# X^{\# p+1}\to X^{\# n}$, to the collection $(f(e_{0}),\dots,f(e_{p}))$ of images of standard basis vectors in $X^{\# p+1}$. This map is injective by Lemma~\ref{lem: enhanced splitting} and, if we represent morphisms $f$ by matrices, has image the collections $(c_{n-p},\dots,c _n)$ of last $p+1$ column vectors of matrices in $\Aut(X^{\# n})$. Finally, the same braid group computation used in the proof of the proposition shows that the $i$th face map in $W_n(0,X)$ corresponds, under this map, to the map forgetting the $i$th vector in the collection. From this it follows that in fact any collection of $(p+1)$ column vectors in a  matrix representing an element of $\Aut(X^{\# n})$ defines a $p$-simplex of $W(0,X)$, and any simplex can be represented that way. 
\end{rem}

\section{The homological stability theorem}\label{sec:stability}

Our main stability theorem is now a direct application of the ``stability machine'' of \cite{RWW} and its generalization~\cite{krannich}. We explain here how this works. 

\smallskip

Recall from Section~\ref{sec:destabilization} the category $\Fd^c(A,X)$ with objects the formed spaces with boundary $N_k=A\# X^{\#k}$ for all $k\ge 0$ and their automorphisms as morphisms. This category is a module over the braided monoidal category $\FdX$, with the action induced from the monoidal structure of $\Fd$. This action in particular encodes the stabilization maps 
$$\s_n: \Aut_{\Fd}(A\# X^{\#n}) \xrightarrow{-\#\id_{X}} \Aut_{\Fd}(A\# X^{\# n+1})$$
that we will be studying. Note that these maps are injective by definition (for example by considering their matrix forms). 
We are most particularly interested in the case $A=0$ is the trivial module, since 
$$\Aut_{\Fd}(X^{\# 2n+1})\cong \Sp_{2n}(R)$$
is the symplectic group, with $\Aut_{\Fd}(X^{\# 2n})$ a group one could define as an ``odd symplectic group'' (see Proposition~\ref{prop: automorphisms X^n}). 

\medskip

 As we will verify now, the main result of \cite{RWW,krannich} directly applies to the above stabilization maps. It says that if the complex of destabilizations (of Section~\ref{sec:destabilization}) is highly connected, as we have just shown, then stability holds, also with certain types of twisted coefficients. We start by stating and proving the result with constant coefficients. 
 
\begin{thm}\label{thm:stabconst}
    Let $R$ be a ring with finite dual unitary stable rank.  
    The map 
    $$H_i\big(\Aut_{\Fd}(A\# X^{\#n});\Z\big) \xrightarrow{-\#\id_{X}} H_i\big(\Aut_{\Fd}(A\# X^{\# n+1}),\Z\big)$$
    is an epimorphism for $i\le \frac{n-2usr(R)-3}{3}$ and an isomorphism for $i\le \frac{n-2usr(R)-6}{3}$. When $R$ is a PID the map is an epimorphism for $i\le \frac{n-1}{3}$ and and isomorphism for $i\le \frac{n-4}{3}$. 

    Moreover, if $A=0$ and $n=2g+1$ is odd then the map 
    $$H_i\big(\Aut_{\Fd}( X^{\#2g+1});\Z\big) \xrightarrow{-\#\id_{X}} H_i\big(\Aut_{\Fd}( X^{\# 2g+2}),\Z\big)$$
    is a monomorphism for all $i$.
\end{thm}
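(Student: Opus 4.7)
The plan is to apply the generic homological stability machine of \cite{RWW}, in the form extended to $E_1$-modules over $E_2$-algebras by Krannich \cite{krannich}. All structural inputs are already established: the braided monoidal subcategory $\FdX$ generated by $X$ (Proposition~\ref{prop:beta}), its action on $\Fd^c(A,X)$ via the monoidal sum $\#$, and the local cancellation property, which holds in $\Fd^c(A,X)$ by construction. The only nontrivial input is the connectivity of the destabilization complex.

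This connectivity has been assembled in Corollary~\ref{cor: connectivity W}: the complex $W_n(A,X)$ is $\lfloor\frac{n-2usr(R)-7}{3}\rfloor$-connected in general, and $\lfloor\frac{n-5}{3}\rfloor$-connected when $R$ is a PID. Feeding this into the generic stability theorem with slope denominator $k=3$ directly yields the asserted epimorphism and isomorphism ranges. The numerical bookkeeping (epimorphism one degree past the isomorphism range, with the right additive shift) is routine and matches exactly the stated bounds $\frac{n-2usr(R)-3}{3}$ for epi and $\frac{n-2usr(R)-6}{3}$ for iso (resp. $\frac{n-1}{3}$ and $\frac{n-4}{3}$ in the PID case).

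For the final statement ($A=0$, $n=2g+1$), the plan is to exhibit an explicit group-theoretic retraction of the stabilization map $\s_{2g+1}$, which will give split injectivity on group homology in every degree by functoriality. Proposition~\ref{prop: automorphisms X^n} identifies the source with $\Sp_{2g}(R)$ and the target with $\Stab_{\Sp_{2g+2}(R)}(e_1)$; tracing through the identifications in the proof of Lemma~\ref{lem: stabilisations agree}, the map $\s_{2g+1}$ corresponds to $\phi \mapsto \phi \oplus \id_\HH$ on $\HH^{\oplus g} \oplus \HH$, with $e_1$ lying in the added hyperbolic summand.

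The retraction $r: \Stab_{\Sp_{2g+2}(R)}(e_1) \to \Sp_{2g}(R)$ is the natural action on the quotient: any $\psi$ fixing $e_1$ preserves the hyperplane $\langle e_1\rangle^\perp \subset \HH^{\oplus g+1}$ and descends to an automorphism of $\langle e_1\rangle^\perp / \langle e_1\rangle$, which inherits a non-degenerate symplectic structure and is isomorphic to $\HH^{\oplus g}$. A direct computation shows that $\phi \oplus \id_\HH$ descends to $\phi$ on this quotient, so $r \circ \s_{2g+1} = \id$. The main (minor) obstacle is careful bookkeeping of the isomorphisms from Propositions~\ref{prop:X^n} and \ref{prop: automorphisms X^n} to confirm that $\s_{2g+1}$ really lands in the stabilizer of $e_1$ as the obvious orthogonal-sum inclusion, which follows the template already given in the proof of Lemma~\ref{lem: stabilisations agree}.
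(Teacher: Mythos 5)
Your proposal is correct and takes essentially the same route as the paper: the first part plugs Corollary~\ref{cor: connectivity W} into the Krannich machine exactly as the actual proof does, and the monomorphism is obtained from the same group-level retraction, which the paper encodes via the commutative square in $e$, $c_f$, $d$, $c$ and you describe equivalently as the parabolic quotient acting on $\langle e_1\rangle^{\perp}/\langle e_1\rangle$. One minor slip: Lemma~\ref{lem: stabilisations agree} treats the double stabilization $X^{\# 2g+1}\to X^{\# 2g+3}$, not the single step you need, though the analogous one-step computation does confirm that $\s_{2g+1}(\phi)=\phi\oplus\id_{\HH}$ with $e_1$ in the added $\HH$-summand.
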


\begin{proof}
Let us begin by proving the first part.
    By Proposition~\ref{prop:beta}, the category $\FdX$ is braided. As $\Fd^c(A,X)$ is a module over $\FdX$, \cite[Lem7.2]{krannich} shows that the classifying space $B\Fd^c(A,X)$ is an $E_1$-module over the $E_2$-algebra $B\FdX$. 

     $W_n(A,X)$  is the semi-simplicial set denoted $W^{\text{RW}}(A\# X^{\# n})_\bullet$ in \cite[Def 7.5]{krannich}. By Section 7.3 in that paper,  it is homotopy equivalent to the semi-simplicial space  $W(A\# X^{\# n})_\bullet$ of \cite{krannich} since $\Fd^c(X,A)$ is a groupoid satisfying cancellation, and the stabilization maps $\s_n$ are injective. By Remark~2.7 of that paper, this determines the connectivity assumption of Theorem A in that paper: the canonical resolution of the assumption of the theorem is $m$-connected, if and only if the space $W(A\# X^{\#n}_\bullet)$ is $(m-1)$-connected.

Define the grading $g_\F:\Fd^c(A,X)\to \mathbb{N}$ by 
$$g_\F(A\# X^{\# n})=\left\{\begin{array}{ll} n-3& R \ \textrm{is a PID}\\
 n-2usr(R)-5&\textrm{otherwise.}\end{array}\right.$$
Given that  $W(A,X)$ is $\left(\frac{g_\F-2}3\right)$--connected by Corollary \ref{cor: connectivity W}, we have that the canonical resolution
 is $\left(\frac{g_\F-2+3} 3\right)$--connected. 
Hence we can apply
 \cite[Thm A and Rem 2.24(i)]{krannich}, which give that the stabilization map $\s_n$ induces an 
         isomorphism in homology in degrees $i\leq\frac{g_\F-1} 3$ and
        an epimorphism for $i\leq \frac{g_\F+2}3$, which gives the result.
        
To prove the last part of the statement, 
suppose $(M,\la,\del)=(N,\la_N,\del_N)\# X \in \Fd$. Then there is a diagram
    \begin{align*}
\xymatrix{\Aut_{\Fd}(N,\la_N,\del_N) \ar[rr]^-{c} \ar[d]^-{s} & & \Aut_{\F}(N,\la_N) \ar[r]^-{d} & \Aut_F(\frac{(N,\la_N)}{Rad(N,\la_N)}) \\ 
\Aut_{\Fd}(M,\la,\del) 
\ar[rr]^-e & &\Aut_{\F}(\ker \del, \la|_{\ker \del}) \ar[u]^\cong_{c_f}  &}
\end{align*}
where $c,d,e$ are the canonical maps, $Rad(M,\la)=\{m \in M: \la(m,-)=0\}$ denotes the radical, and $s$ is the stabilization map, and where $c_f$ is conjugation by the isomorphism $f$ of \eqref{eq:kerdel} (with $N=\tilde{M}$, see also Remark \ref{rem:HorX}). 
We claim that the diagram commutes.

Indeed, the map $f$ is given explicitly by $f(u)=u-\la(a,u) a$ with inverse $f^{-1}(v)=v-\del(v) a$, where $a$ generates the right $X$-summand. 
Let $\alpha \in \Aut_{\Fd}(N,\la,\del)$ and $n \in N$.  Then $c(\alpha)(n)=\alpha(n)$. Going the other way around the square, we have 
\begin{align*}
f\big((e \circ s)(\alpha)(f^{-1}(n))\big)&= f((\al \# \id_X)(n-\del(n)a)) \\
& = f(\alpha(n)-\del(n) a)=\alpha(n)-\del(n)a-\la(a,\alpha(n))a+\del(n)\la(a,a)a  \\
& = \alpha(n)-\del(n)a+\del(\alpha(n))a+0=\al(n)
\end{align*}
 where the last equality holds since $\del(\alpha(m))=\del(m)$. Thus, commutativity follows.

Finally, when $(N,\la_N,\del_N)=X^{\#2g+1}$, the top composition $d\circ c$ identifies with the isomorphism 
$\Aut_{\Fd}(X^{\#2g+1}) \cong \Aut_{\F}(\HH^g)=Sp_{2g}(R)$ of Proposition \ref{prop: automorphisms X^n}, and the commutativity of the diagram implies that $s$ is injective, also in homology, as required.        
\end{proof}

\begin{proof}[Proof of Theorem~\ref{thm:A}]
   The statement of Theorem~\ref{thm:A} follows  by taking $A=0$ in the previous result, applying Proposition~\ref{prop: automorphisms X^n} to identify $\Aut(X^{\#n+1})=Sp_n(R)$.  
    The fact that the map $H_i(Sp_n(R),\Z) \to H_i(Sp_{n+1}(R),\Z)$ is always injective for $n$ even then follows from the last part of the previous result. 
    Finally, Lemma \ref{lem: stabilisations agree} gives that the stability map under consideration agrees with the classical one on (even) symplectic groups.
\end{proof}

\subsection{Non-trivial coefficients}\label{sec:twisted}

Under the same assumptions as for constant coefficients, homological stability also automatically holds for certain systems of twisted coefficients. These come in two flavours: {\em abelian coefficients} and {\em finite degree coefficient systems}. We describe them now, and state the corresponding stability results. 

\medskip

Consider the abelianization $\Aut(A\# X^{\# n})\rar H_1(\Aut(A\# X^{\# n}),\Z)$. Theorem~\ref{thm:stabconst} gives that the maps 
$$H_1(\Aut(A\# X^{\# n}),\Z) \rar H_1(\Aut(A\# X^{\# n+1}),\Z) \rar \dots$$
are eventually isomophisms. This way we can consider any module $M$ over the stable group $H_1(\Aut(A\# X^{\# \infty}),\Z)$ as a $\Aut(A\# X^{\# n})$--module by restriction. Such a coefficient system is called {\em abelian}.

By \cite[Lemma A1.(i)]{Krannich2020} it is known that $H_1(Sp_{2g}(\Z),\Z)=0$ for $g \ge 3$, hence abelian coefficient systems are not interesting in the case $R=\Z$, which is the main motivation of this paper. However other other rings $R$ these abelianizations do not need to vanish. 
For such rings, Theorem A of \cite{krannich} applies just as for constant coefficients, and give the following result:

\begin{thm}\label{thm:stabab}
    Let $R$ be a ring with finite dual unitary stable rank, and $M$ an $H_1(\Aut(A\# X^{\# \infty}),\Z)$--module.  
    The map 
    $$H_i\big(\Aut_{\Fd}(A\# X^{\#n});\Z\big) \xrightarrow{-\#\id_{X}} H_i\big(\Aut_{\Fd}(A\# X^{\# n+1}),\Z\big)$$
    is an epimorphism for $i\le \frac{n-2usr(R)-4}{3}$ and an isomorphism for $i\le \frac{n-2usr(R)-7}{3}$. When $R$ is a PID the map is an epimorphism for $i\le \frac{n-2}{3}$ and and isomorphism for $i\le \frac{n-5}{3}$. 
\end{thm}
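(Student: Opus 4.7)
The plan is to run the same argument as in the proof of Theorem~\ref{thm:stabconst}, but feed the connectivity of the canonical resolution into the abelian-coefficients version of Krannich's stability theorem (the analogue of \cite[Thm A]{krannich} for coefficient systems coming from an action of the stable abelianization), which introduces the standard $-1$ shift in both the surjectivity and injectivity ranges.

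First, I would recall the input that is already established in the proof of Theorem~\ref{thm:stabconst}: the category $\Fd^c(A,X)$ is an $E_1$-module over the $E_2$-algebra $B\FdX$ via the braiding of Proposition~\ref{prop:beta} and the monoidal structure $\#$, and Corollary~\ref{cor: connectivity W} shows that the destabilisation semi-simplicial set $W_n(A,X)$ is $\left(\frac{g_\F-2}{3}\right)$-connected with respect to the grading $g_\F(A\# X^{\# n})=n-2usr(R)-5$ (respectively $g_\F(A\# X^{\# n})=n-3$ when $R$ is a PID). Equivalently, the canonical resolution is $\left(\frac{g_\F+1}{3}\right)$-connected. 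Given the injectivity of the stabilisation maps $\s_n$ and the cancellation property built into $\Fd^c(A,X)$, this is exactly the input needed by the stability machine.

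Next, I would apply the abelian-coefficients variant of \cite[Thm A]{krannich} (in the formulation analogous to \cite[Thm 5.1(ii)]{RWW}, which covers coefficient systems factoring through $H_1(\Aut(A\# X^{\# \infty});\Z)$). For such systems the ranges of surjectivity and isomorphism are each decreased by one compared to the constant coefficients case: surjectivity holds for $i\le \frac{g_\F+2-1}{3}=\frac{g_\F+1}{3}$ and isomorphism for $i\le \frac{g_\F-1-1}{3}=\frac{g_\F-2}{3}$. Substituting the two possible values of $g_\F$ yields the claimed ranges: $i\le \frac{n-2usr(R)-4}{3}$ and $i\le \frac{n-2usr(R)-7}{3}$ in general, and $i\le \frac{n-2}{3}$ and $i\le \frac{n-5}{3}$ when $R$ is a PID.

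The only step that requires some care is verifying that the Krannich/RWW hypotheses for abelian coefficients really do apply in our braided (but not necessarily symmetric) setting; this is immediate here because the relevant coefficients are pulled back from $H_1$ of the stable group, on which the braiding acts trivially (after applying abelianisation the block braid becomes a signed permutation that acts trivially on $H_1$), so no additional polynomial-degree shift appears. I do not expect any genuine obstacle beyond invoking the right version of the abstract theorem.
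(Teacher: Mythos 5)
Your proposal matches the paper's own (very brief) treatment: the paper simply remarks that Theorem~A of \cite{krannich} applies ``just as for constant coefficients'' and records the resulting ranges, and you spell out precisely what that application consists of with the correct arithmetic (the $-1$ shift in both the surjectivity and injectivity numerators relative to Theorem~\ref{thm:stabconst}, applied to $g_\F = n-2usr(R)-5$, resp.\ $n-3$ for PIDs). The only content beyond what the paper writes is your justification that the abelian-coefficients hypothesis is satisfied because the coefficients pull back from $H_1$ of the stable group, which is exactly the definition of abelian coefficients in \cite{RWW,krannich}, so there is no gap.
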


The other type of non-trivial coefficients we will consider are those of finite degrees, that we will define now. 

\begin{Def}\label{def:coeff}\cite[Def 4.1]{krannich}
    A {\em coefficient system} for the groups $G_n=\Aut_{\Fd}(A\# X^{\# n})$ with respect to stabilization by $X$  is a $G_n$-module $M_n$ for each $n\ge 1$ with $G_n$-equivariant maps $s_n:M_n\to M_{n+1}$ satifying that the braiding $\id_{A\# X^{\# n}}\#\beta_{1,1}$, switching the last two $X$-summands in $A\# X^{\# n+2}$, acts trivially on the image of $M_n$ in $M_{n+2}$ by the double stabilization $s_{n+1}\circ s_n$.  
\end{Def}

When $A=0$, a coefficient system is thus a sequence of compatible symplectic group representations for all $n$'s, odd and even, satisfying the braid condition just described. 
Given a sequence of classical symplectic representations, one can attempt to produce a coefficient system in the above sense by setting $M_{2n}=M_{2n+1}$, since $G_{2n}=\Sp_{2n-1}(R)$ is a subgroup of $G_{2n+1}=\Sp_{2n}(R)$ (by Proposition~\ref{prop: automorphisms X^n}), and check whether the braid condition is satisfied. Such a construction will however not be relevant for us here, since such coefficient systems are never of finite degree (in the sense of Definition~\ref{def:findeg} below) unless they are constant.

 \begin{ex}\label{ex:coeff1} 
    \begin{enumerate}[(i)]
        \item 
Suppose $A=(M,\la,\del)$. The simplest example of a coefficient system is to set $M_n=M\oplus R^n$,  the underlying module of $A\# X^{\# n}$. Then $\id \x \Aut(X^{\# 2})$ acts trivially on the image of $M\oplus R^n$ in $M\oplus R^{n+2}=M\oplus R^n\oplus R^2$, and so the braiding in particular acts trivially. 

\item The previous coefficient system is not irreducible, and admits the following sub-coefficient system: let $M_n=\ker(\del: M\oplus R^n\to R)$. When $A=0$, we have that $M_{2n+1}\cong R^{2n}$, and one checks that on odd entries this coefficient system identifies with the defining representation of the symplectic groups, see \cite[Rem 3.7]{MPPRW}.

\item Additional examples over the rationals are discussed in \cite[sec 4.2]{MPPRW}.
\end{enumerate}
\end{ex}

\medskip

Given a coefficient system $\M=\{(M_n)_{n\ge 1},(s_n)_{n\ge 1}\}$, one can define its suspension $\Sigma \M$ by setting $\Sigma M_n=M_{n+1}$ and 
$$\Sigma s_n: \Sigma M_n=M_{n+1} \xrightarrow{s_{n+1}} M_{n+2}\xrightarrow{\id \# \beta_{1,1}} M_{n+2} = \Sigma M_{n+1}. $$
One checks that this is again a coefficient system, and that the maps $s_n$ assemble to define a morphism of coefficient systems $\M\to \Sigma \M$, see \cite[Def 4.4]{krannich}. Denoting $\ker \M$ and $\operatorname{coker}\M$ the coefficient system obtained by taking the cokernel of this map, we can now define the degree of a coefficient system: 

 \begin{Def}\cite[Def 4.10]{RWW}\label{def:findeg} A coefficient system $\M=\{(M_n)_{n\ge 1},(s_n)_{n\ge 1}\}$ is of 
 \begin{enumerate}[(i)]
     \item (split) degree $-1$ at $n_0$ is $M_n=0$ for all $n\ge n_0$. 
     \item {\em degree $r$ at $n_0$} if $\ker \M$ is of degree -1 at $n_0$ and $\operatorname{coker}\M$ is of degree $r-1$ at $n_0-1$. 
     \item {\em split of degree $r$ at $n_0$} if $\ker \M$ is of degree -1 at $n_0$ and $\operatorname{coker}\M$ is split of degree $r-1$ at $n_0-1$, where a coefficient system is split if all its structure maps $s_n$ are split injective in the category of coefficient systems. 
 \end{enumerate}
  \end{Def}

    \begin{ex} 
    \begin{enumerate}[(i)]
    \item Constant coefficient systems are of degree 0. 
        \item 
  The coefficient system $M_n=M\oplus R^n$, with $s_n:R^n\to R^{n+1}$ the standard inclusion, as in Example~\ref{ex:coeff1}(i), is a split coefficient system of degree 1, with cokernel the constant (and hence degree 0) coefficient system $R$. 
  \item Taking $A=0$ as in Example~\ref{ex:coeff1}(ii), the coefficient system $M_n=ker(\del: R^n\to R)$ is a coefficient system of degree 1, with cokernel the constant (and hence degree 0) coefficient system $R$. 
         \end{enumerate}
    \end{ex}

\begin{rem}\label{rem:twisted}
While standard examples of coefficient systems will tend to fit in our set-up, stabilizing with $X$ in the category $\Fd$ for formed spaces with boundary, just as well as in the more classical stabilization with $\HH$ in the category $\F$ of formed spaces, there is no direct translation between finite degree coefficient systems in the sense of the above definitions and finite degree coefficient systems coming from the same definitions but in the context of the category $\F$ instead, as in \cite[Sec 5.4]{RWW} or in \cite{Fri17}. 
\end{rem}

\begin{thm}\label{thm:stabtwist}
    Let $R$ be a ring with finite dual unitary stable rank, and let $M=\{M_n,s_n\}_{n\ge 1}$ be a coefficient system of degree $r$ for the groups $\{\Aut_{\Fd}(A\# X^{\#n})\}_{n\ge 1}$. The map 
    $$H_i\big(\Aut_{\Fd}(A\# X^{\#n});M_n\big) \xrightarrow{-\#\id_{X}} H_i\big(\Aut_{\Fd}(A\# X^{\# n+1}),M_{n+1}\big)$$
      is an epimorphism for $i\le \frac{n-2usr(R)-3r-2}{3}$ and an isomorphism for $i\le \frac{n-2usr(R)-3r-5}{3}$. When $R$ is a PID the map is an epimorphism for $i\le \frac{n-3r}{3}$ and and isomorphism for $i\le \frac{n-3r-3}{3}$. 
\end{thm}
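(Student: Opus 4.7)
The plan is to apply the twisted-coefficient version of the stability machine from \cite{krannich}, reusing entirely the $E_2$-module setup from the proof of Theorem~\ref{thm:stabconst}. By Proposition~\ref{prop:beta} the subcategory $\FdX$ is braided monoidal, and by \cite[Lem 7.2]{krannich} the groupoid $\Fd^c(A,X)$ yields an $E_1$-module $B\Fd^c(A,X)$ over the $E_2$-algebra $B\FdX$. Local cancellation, established in Theorem~\ref{thm: general cancellation} (and in the sharper form of Theorem~\ref{thm: cancellation} for PIDs), together with the injectivity of the stabilization maps, guarantees by \cite[Sec 7.3]{krannich} that the semi-simplicial set $W_n(A,X)$ from Section~\ref{sec:destabilization} agrees with the canonical resolution $W(A\#X^{\#n})_\bullet$ of \cite{krannich} up to homotopy.

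Next, I would pick exactly the same grading as in the proof of Theorem~\ref{thm:stabconst}, namely
\[
g_\F(A\#X^{\#n})=\begin{cases} n-3 & R\text{ is a PID},\\ n-2usr(R)-5 & \text{otherwise},\end{cases}
\]
so that Corollary~\ref{cor: connectivity W} translates into the statement that the canonical resolution is $\bigl(\tfrac{g_\F-2}{3}\bigr)$-connected, equivalently (via \cite[Rem 2.7]{krannich}) that the slope $\tfrac13$ connectivity hypothesis of Krannich's main stability theorem holds.

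With these inputs in place, the result is a direct invocation of the twisted version of Theorem A of \cite{krannich} (namely the twisted statement encoded in Theorem~B and Remark~2.24(i) of that paper) applied to the coefficient system $\M$ of degree $r$ in the sense of Definition~\ref{def:findeg}. The standard statement is that for a degree $r$ coefficient system the stability range shifts by $r$ relative to the constant coefficient range, yielding, for our slope $\tfrac13$, an isomorphism for $i\le \tfrac{g_\F-1}{3}-r$ and an epimorphism for $i\le \tfrac{g_\F+2}{3}-r$, which unwinds into precisely the bounds claimed in the theorem. The PID case uses the improved connectivity given by the PID part of Corollary~\ref{cor: connectivity W}.

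The main place where care is needed, and the only real obstacle, is the compatibility of our notion of coefficient system (Definition~\ref{def:coeff}, with the braid triviality condition on $\id\,\#\,\beta_{1,1}$) with the notion used in \cite[Sec 4]{krannich} for $E_1$-modules over braided monoidal categories: one must verify that a degree $r$ coefficient system in our sense defines a polynomial functor of degree $r$ on the underlying groupoid of $\FdX$ acting on $B\Fd^c(A,X)$, so that Krannich's machine applies with the expected degree shift. This is precisely the compatibility flagged in Remark~\ref{rem:twisted}: our definition is tailored to the stabilization by $X$ in $\Fd$ (rather than by $\HH$ in $\F$), and the braid condition in Definition~\ref{def:coeff} is exactly what is needed so that the functorial framework of \cite[Sec 4]{krannich} applies directly, with no further translation. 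Once this compatibility is spelled out, no further work is required: the stated ranges for the PID and non-PID case follow by plugging in $g_\F$.
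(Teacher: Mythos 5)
Your approach is essentially the same as the paper's, which disposes of the theorem in one line by citing \cite[Thm C]{krannich} in the same framework set up for Theorem~\ref{thm:stabconst}; reusing the grading $g_\F$ and the connectivity input from Corollary~\ref{cor: connectivity W} and invoking the twisted version of Krannich's machine is exactly what happens. Two small points: the relevant twisted-coefficient statement in \cite{krannich} is Theorem~C (polynomial coefficients), not Theorem~B (abelian coefficients), which is the one the paper cites; and your claimed translation ``isomorphism for $i\le \tfrac{g_\F-1}{3}-r$, epimorphism for $i\le \tfrac{g_\F+2}{3}-r$'' does not quite unwind to the stated bounds — with $g_\F=n-2usr(R)-5$ that gives $i\le\tfrac{n-2usr(R)-6-3r}{3}$, whereas the theorem asserts $i\le\tfrac{n-2usr(R)-5-3r}{3}$, so the exact form of Krannich's Theorem~C is marginally sharper than a naive shift by $r$ of the Theorem~A range. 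Finally, the compatibility concern you raise is already built in: Definition~\ref{def:coeff} is taken verbatim from \cite[Def 4.1]{krannich} and Definition~\ref{def:findeg} from the same framework, so no translation step is actually needed.
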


\begin{proof}
This follows just like from Theorem~\ref{thm:stabconst}, from applying 
        \cite[Thm C]{krannich}. 
\end{proof}

\begin{proof}[Proof of Theorem~\ref{thm:B}]
   Theorem~\ref{thm:B} follows from the above result by setting $A=0$, applying Proposition~\ref{prop: automorphisms X^n} to identify the automorphism groups with symplectic groups.  
\end{proof}



\appendix

\section{Connectivity of the complexes of unimodular sequences}\label{appA}

In this section we will prove Theorem \ref{connectivity of U}, which gives a bound on the connectivity of the poset of unimodular sequences constrained to an affine subspace. 
The main ingredient of the proof is a generalisation of a result of van der Kallen in \cite{vdKL}, combined with further generalisations by Friedrich in \cite{Fri17} and the first author in \cite{S22a}.

\medskip

As in \cite{vdKL,Fri17},  
for a set $X$ we let $\mathcal{O}(X)$ denote the poset of finite non-empty sequences of elements of $X$, ordered by refinement. 

We say that $F \subset \mathcal{O}(X)$ satisfies the \textit{chain condition} if it is closed under taking subsequences. 
For $(v_1,\cdots,v_n) \in F$ we let $F_{(v_1,\dots,v_n)}$ denote the poset of sequences $(w_1,\cdots,w_m) \in \mathcal{O}(X)$ such that $(w_1,\cdots,w_m,v_1,\cdots,v_n) \in F$.

If $M$ is an $R$-module, the poset $\Uu(M)$ identifies with the subposet of sequences in $\mathcal{O}(M)$ which are unimodular.
And for $N\le M$ a submodule and $l\in M$ an element,  we have  
$$\Uu(M,l+N)=\mathcal{O}(l+N)\cap \Uu(M)$$ is the subposet of $\Uu(M)$ of unimodular sequences in $M$ of elements of the form $l+n$ with $n\in N$.

The proof of Theorem~\ref{connectivity of U} uses the following lemma from \cite{vdKL}, which we state here for convenience. 
\begin{lemma}\cite[Lem 2.13]{vdKL} \label{lem van der kallen}
Suppose $F \subset \Uu(M)$ satisfies the chain condition. 
Let $X \subset M$ be a subset. 
\begin{enumerate}[(1)]
    \item If $O(X)\cap F$ is $d$-connected, and $\mathcal{O}(X) \cap F_{(v_1,\cdots,v_m)}$ is $(d-m)$-connected for all sequences $(v_1,\cdots,v_m) \in F \setminus \mathcal{O}(X)$,  
    then $F$ is $d$-connected. 
    \item If $\mathcal{O}(X) \cap F_{(v_1,\cdots,v_m)}$ is $(d+1-m)$-connected for all sequences $(v_1,\cdots,v_m) \in F \setminus \mathcal{O}(X)$, and there is a length 1 sequence $(y_0)\in F$ with $\mathcal{O}(X) \cap F \subset F_{(y_0)}$, 
    then $F$ is $(d+1)$-connected. 
\end{enumerate}    
\end{lemma}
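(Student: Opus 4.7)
The plan is to prove both parts of the lemma by the standard bad-simplex replacement argument on the nerve $|F|$ of the poset $F$, in the same style as the proof of Theorem~\ref{thm: connectivity D} given in the body of the paper.

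For part (1), it suffices to show that the inclusion $|\mathcal{O}(X) \cap F| \hookrightarrow |F|$ is $d$-connected; combined with the hypothesized $d$-connectivity of $|\mathcal{O}(X) \cap F|$, this gives the result. Starting from a simplicial sphere $f : S^k \to |F|$ with $k \le d$, the goal is to push its image into $|\mathcal{O}(X) \cap F|$ by iterated surgery. A simplex of $|F|$ is a chain of sequences in $F$; call such a chain, or a simplex in the triangulation of $S^k$ mapping to it, \emph{bad} if the chain involves some element of $F \setminus \mathcal{O}(X)$. Working with a bad simplex $T$ of maximal dimension in the triangulation, let $\sigma = (v_1,\dots,v_m)$ be the smallest vertex of the chain $f(T)$ lying outside $\mathcal{O}(X)$. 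The chain condition together with maximality of $T$'s badness force the link of $T$ to map into chains that only extend $\sigma$ by prepending $X$-elements; that is, into $|\mathcal{O}(X) \cap F_{(v_1,\dots,v_m)}|$ (modulo the cone over the part of $T$ representing $\sigma$ itself). The link is a sphere whose dimension, by a careful count involving how many bad vertices $T$ carries, is at most $d-m$. The assumed $(d-m)$-connectivity of the target then lets one extend over a disc and replace $f$ on the star of $T$, strictly reducing the badness. Iteration terminates with $f$ supported in $|\mathcal{O}(X) \cap F|$, where it is null-homotopic by assumption.

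For part (2), the procedure of part (1) gains one extra unit of connectivity thanks to the distinguished length-one sequence $(y_0) \in F$. Running the same surgery with $d$ replaced by $d+1$ (matched with the shifted $(d+1-m)$-connectivity hypothesis) moves a sphere $f : S^{d+1} \to |F|$ into $|\mathcal{O}(X) \cap F| \subset |F_{(y_0)}|$, after which the vertex $(y_0)$ serves as a cone point within $|F|$ to null-homotope the remaining map through one more dimension, yielding the advertised $(d+1)$-connectivity.

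The main obstacle is the dimensional bookkeeping: matching the length $m$ of the distinguished bad sequence $\sigma$ to the dimension of the link sphere of a bad simplex of maximal dimension is the delicate combinatorial step, and it is precisely this correspondence that makes the sliding connectivity bound $(d-m)$ the natural one. The chain condition on $F$ is used throughout to guarantee that the replaced simplices still land in $F$ after surgery and to identify the link of a bad simplex with the prescribed subposet $\mathcal{O}(X) \cap F_{(v_1,\dots,v_m)}$.
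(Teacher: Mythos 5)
This lemma is not proved in the paper: it is cited directly from \cite[Lem.~2.13]{vdKL} and used as a black box in the proof of Theorem~\ref{connectivity of U}, so there is no internal proof to compare against. Your sketch, modelled on the bad-simplex argument the paper uses for Theorem~\ref{thm: connectivity D}, has a structural flaw at the first step. You declare a simplex $T$ of the triangulated sphere \emph{bad} when the chain $f(T)$ involves some vertex in $F\setminus\mathcal{O}(X)$. Since $\mathcal{O}(X)\cap F$ is downward-closed in $F$ for the refinement order (an entry of $\sigma$ outside $X$ persists in any supersequence $\tau\geq\sigma$), this notion of badness is closed under passing to \emph{cofaces}: if $T\subseteq T'$ and $T$ is bad, so is $T'$. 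A bad simplex of maximal dimension is therefore automatically a top-dimensional simplex of $S^k$, its link is empty, and the surgery step you describe never applies. The Hatcher--Wahl replacement scheme requires exactly the opposite closure property -- badness closed under taking faces -- so that a maximal bad simplex has a genuine sphere as its link; the naive definition here cannot provide that.

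Even with a repaired definition (say, $T$ is bad iff every vertex of $f(T)$ lies in $F\setminus\mathcal{O}(X)$, which is closed under faces), the next step still fails. For such a maximal $T$, any vertex $w$ of $\link(T)$ has $f(w)\in\mathcal{O}(X)\cap F$ and $f(w)$ comparable to every element of $f(T)$; downward-closedness then forces $f(w)<\sigma$ for $\sigma=\min f(T)$. So the link maps into the poset of \emph{subsequences} of $\sigma$ with entries in $X$, which is a different object from $\mathcal{O}(X)\cap F_{(v_1,\dots,v_m)}$, the poset of $X$-sequences that can be \emph{prepended} to $\sigma$ -- and it is only the latter whose connectivity the lemma hypothesizes. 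There is no identification between these two posets in general, so the hypothesis cannot be fed into the link-extension step, and the ``dimensional bookkeeping'' you flag as delicate is in fact a substantive obstruction, not a count to be supplied. Van der Kallen's actual proof is a poset-theoretic induction that exploits upward-closedness of $F\setminus\mathcal{O}(X)$ and uses the posets $\mathcal{O}(X)\cap F_{(v_1,\dots,v_m)}$ to control attachments as bad sequences are added; it is not a simplex-replacement argument, and you should consult that source or reconstruct the proof along those lines rather than forcing this statement into the replacement template.
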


It will be convenient for the sake of the proof of the theorem to work in  $M^\infty:=M\oplus R^\infty$, so we always have additional unimodular vectors at hand, and we will consider posets of the form $\mathcal O(X)\cap \mathcal U(M^\infty)$, with $X$ an affine subspace of $M$ or the union of two such. Note that for $N\le M$, the posets $$\Uu(M,l+N)\ \cong\  \Uu(M^\infty,l+N)=\mathcal{O}(l+N)\cap \Uu(M^\infty)$$ are isomorphic. 
The proof follows closely \cite{Fri17}, and consists in checking that the argument there (and heavily inspired by \cite{vdKL}) carries over. 

\smallskip

Recall that  $$\tM(M,N):= \max\{\rk(L): L \subset N \text{ is a direct summand of } M\}.$$ 

\begin{proof}[Proof of Theorem~\ref{connectivity of U}]
Let $e_1$ denote the first standard generator in $R^\infty\le M^\infty=M\oplus R^\infty$. We will prove the result by induction on $r=\tM(M,N)=\tM(N,M^\infty)$ using the following two additional statements: 
\begin{enumerate}[(a)]
     \item $\mathcal{O}(l+N \cup l+ N+e_1) \cap \Uu(M^{\infty})$ is $(\tM(M,N)-sr(R))$-connected. 
    \item $\mathcal{O}(l+N \cup l + N+e_1) \cap \Uu(M^{\infty})_{(v_1,\cdots,v_k)}$ is $(\tM(M,N)-sr(R)-k)$-connected for any $k \geq 1$ and $(v_1,\cdots,v_k) \in \Uu(M)$.
\end{enumerate} 
We start by noting that (1),(2),(a) and (b) hold for $r \leq sr(R)-1$. Indeed,  parts (1),(2) and (b) are vacuously true in that case since any poset is $(-2)$-connected, and 
statement (a) is also true since $\mathcal{O}(l+ N \cup l+ N+e_1) \cap \Uu(M^{\infty})$ contains the one element sequence $(l+e_1)$ and hence is non-empty, so $(-1)$-connected.

Thus we can assume that $r=\tM(M,N) \geq sr(R)$. We fix $R^r\cong N_0\le N$ a  direct summand exhibiting that $\tM(M,N)=r$. In particular, $M=N_0\oplus M'$. We fix also a basis $x_1,\dots,x_r$ for $N_0$. 
Without loss of generality we can assume that $l\in M'$, as setting its $N_0$ component to 0 does not affect the subspace $l+N$.

\medskip

\noindent \textit{Proof that $(b)_{r-1}\Rightarrow (b)_r$.}

Let $Y=l+N \cup l+N+e_1$ and $(v_1,\cdots,v_k)\in Y$ a unimodular sequence. We need to show that  $\mathcal{O}(Y) \cap \Uu(L^{\infty})_{(v_1,\cdots,v_k)}$ is $d$-connected for $d=r-sr(R)-k$.

We start by finding $f \in \GL(M^{\infty})$ such that $f(Y)=Y$ and the projection $f(v_1)|_{N_0}$ is unimodular.  
Indeed, since $v_1$ is unimodular in $M^\infty$, there is $\phi: M^\infty \rightarrow R$ linear such that $\phi(v_1)=1$. 
Write $v_1=\sum_{i=1}^{r}{\lambda_i x_{i}}+a+b\in N_0\oplus M'\oplus R^\infty$.  Then $\phi(v_1)=\sum_{i=1}^{r}{\lambda_i \phi(x_{i})}+\phi(a+b)=1$ so $(\la_1,\dots,\la_t,\phi(a+b))$ is a unimodular vector in $R^{r+1}$. 
 
Since $r \geq sr(R)$, 
there are $\mu_1,\cdots,\mu_r \in R$ such that 
$(\lambda_1+\mu_1\phi(a+b),\cdots,\lambda_r+\mu_r \phi(a+b))$ is unimodular in $R^r$. 
Now define $f: M^\infty=N_0\oplus M'\oplus R^\infty \rightarrow N_0\oplus M'\oplus R^\infty$ via 
$$f(\nu_1,\dots,\nu_r,y,z)=(\nu_1+\mu_1 \phi(y+z),\dots,\nu_r+\mu_r\phi(y+z),y,z)$$
where we have identified $N_0$ with $R^r$ using the chosen basis. 
The map $f$ is invertible, with inverse of the same form. 
We have that both $f, f^{-1}$ satisfy that $f(N) \subset N$, $f^{-1}(N) \subset N$, so $f(N)=N$ as a set. 
Also, for any $x \in M^\infty$, $f(x) \in x+N$ and $f^{-1}(x) \in x+N$, hence $f(x+N)=x+N$ as sets. 
Thus, in particular $f(Y)=Y$ as sets. 
By construction, $f(v_1)|_{N_0}$ is unimodular.

Now we need to treat the case $r=\tM(M,N)= sr(R)$ separately: here we only need to consider the case $k=1$ and show that the poset is non-empty.
Observe that by definition $(f(v_1),l+e_1)$ is a unimodular sequence in $M^\infty$, and hence so is $(f^{-1}(l+e_1),v_1)$. 
Since $l+e_1 \in Y$ then $f^{-1}(l+e_1) \in Y$ too, so we are done. 

Now let us treat the general case $r>sr(R)$:
In that case $GL_r(R)$ acts transitively on the set of unimodular vectors of $N_0 \cong R^r$, so we can now modify $f$ so that $f(v_1)|_{N_0}=x_1$.  
Thus without loss of generality we can assume that we are in the situation that the $x_{1}$-coordinate of $v_1$ is $1$. 

Now set 
$$X:= l+N' \cup l+N'+e_1 \subset Y,$$
where $N'\le N$ is the complement of the submodule generated by $x_1$ in $N$.  Then
$$\mathcal{O}(X)\cap F=\mathcal{O}(X)\cap \Uu(M^\infty)_{(v_1,\dots,v_k)}=\mathcal{O}(X)\cap \Uu(M^\infty)_{(v_2,\dots,v_k)}$$
 and $(b)_{r-1}$ implies that $\mathcal{O}(X)\cap F$ is $(r-1-sr(R)-(k-1))$--connected, i.e.~$d$--connected. 
Likewise, given a sequence $(u_1,\cdots,u_m) \in F \setminus \mathcal{O}(X)$, the poset $F \cap \mathcal{O}(X)_{(u_1,\cdots,u_m)}=\mathcal{O}(X)\cap \Uu(M^\infty)_{(v_2,\dots,v_k,u_1,\dots,u_m)}$ is $(d-m)$-connected by $(b)_{r-1}$, which implies that $F$ itself is $d$--connected, as needed,  by Lemma \ref{lem van der kallen}(1).

\medskip

\noindent \textit{Proof that $(b)_r\Rightarrow (2)_r$.}
We now set  
$$X=  l+N' \cup l+N'+x_{1} \ \subset\ l+N, $$
for $N'$ as above, and let
$$F= \Uu(M,l+N)_{(v_1,\cdots,v_k)} =\mathcal{O}(l+N) \cap \Uu(M^{\infty})_{(v_1,\cdots,v_k)},$$
where $(v_1,\dots,v_k)\in \Uu(M)$. We need to show that $F$ is $d$--connected for $d=r-sr(R)-1-k$. 
For $m \geq 0$ and a (possibly empty) sequence $(u_1,\cdots,u_m) \in F \setminus \mathcal{O}(X)$ we have 
$$F \cap \mathcal{O}(X)_{(u_1,\cdots,u_m)}= \mathcal{O}(l+N' \cup l+N'+x_{1}) \cap \Uu(M^{\infty})_{(u_1,\cdots,u_m,v_1,\cdots,v_k)}$$
which is $(r-1-sr(R)-(m+k))$-connected by part $(b)_r$ (which applies as $k \geq 1$). 
Hence Lemma \ref{lem van der kallen}(1) implies that $F$ is $d$--connected for $d=r-1-sr(R)-k$, as needed. 

\medskip

\noindent \textit{Proof that $(a)_{r-1},(2)_{r-1}\Rightarrow (1)_r$} 
Let $X$ and $F$ be as above, just taking now $k=0$, i.e.~replacing $(v_1,\dots,v_k)$ with the empty sequence. Then $F\cap \mathcal{O}(X)$ is $(r-1-sr(R))$--connected by $(a)_{r-1}$ and $F \cap \mathcal{O}(X)_{(u_1,\cdots,u_m)}$ is $(r-1-sr(R)-k)$--connected by $(2)_{r-1}$. 
Lemma \ref{lem van der kallen}(1) then implies that $F$ is $(r-1-sr(R))$--connected, as needed.

\medskip

\noindent \textit{Proof that $(1)_r,(2)_r\Rightarrow (a)_r$.} 
Finally for (a) we will use Lemma \ref{lem van der kallen}(2). We take $$X=l+N\ \ \textrm{and}\ \ F=\mathcal{O}(l+N \cup l+N+e_1) \cap \Uu(M^{\infty}),$$ 
with
$$y_0=l+e_1.$$
Let $(u_1,\cdots,u_m) \in F\setminus \mathcal{O}(X)$. Without loss of generality we can assume that $u_1 \in l+N+e_1$, and we let $u_i'=u_i-s_i u_1$ for $i \geq 2$, where the $s_i \in R$ are chosen so that the $e_1$-component of $u_i'$ vanishes for $2 \leq i \leq m$. 
Then, 
$$\mathcal{O}(X) \cap F_{(u_1,\cdots,u_m)}= \mathcal{O}(l+N) \cap \Uu(M^\infty)_{(u_2',\cdots,u_m')}$$
which is $(r-sr(R)-m)$-connected by $(1)_{r}$ if $m=1$ and likewise $(r-sr(R)-1-(m-1))$-connected by $(2)_{r}$ if $m \geq 2$. 
Note also that $\mathcal{O}(X) \cap F \subset F_{(y_0)}$ is. Hence we can apply Lemma \ref{lem van der kallen}(2) with $d=r-sr(R)-1$, which gives the desired connectivity of $F$.  
\end{proof}

\section{Cancellation and classification of formed spaces with boundary For PIDs}\label{appB}

The goal of this appendix is to improve the bounds of the cancellation Theorem \ref{thm: general cancellation} when the ring $R$ is a PID.
The proof uses a classification of the objects of $\Fd$ that holds when $R$ is a PID. We start by stating the cancellation theorem, and then give the classification result. 

\begin{thm}[Cancellation for PIDs]\label{thm: cancellation}
    Let $R$ be a PID and $(M_i,\la_i,\del_i) \in \Fd$ with 
    $\del_i$ surjective for $i=1,2$ and such that  
 $$(M_1,\la_1,\del_1)\# X \cong (M_2,\la_2,\del_2) \# X.$$
 Then $(M_1,\la_1,\del_1) \cong (M_2,\la_2,\del_2)$. 
\end{thm}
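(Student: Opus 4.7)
The plan is to first establish a classification theorem (Theorem~\ref{thm: classification}) for formed spaces with boundary over a PID, extracting intrinsic invariants, and then deduce cancellation by showing the invariants can be read off after summing with $X$.

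For the classification, start from $(M,\la,\del)\in\Fd$ with $\del$ surjective, pick an arc $a$ with $\del(a)=1$ to obtain a module decomposition $M=\langle a\rangle\oplus \ker\del$, and put $(\ker\del,\la|_{\ker\del})\in\F$ in canonical Smith-normal form using Lemma~\ref{lem: pid}(ii). The interaction between $a$ and $\ker\del$ is captured by the linear functional $\psi:=\la(a,-)|_{\ker\del}\in(\ker\del)^\vee$. Replacing $a$ by $a+n$ for $n\in\ker\del$ changes $\psi$ by $\la(n,-)|_{\ker\del}\in\la^\vee(\ker\del)$, so the class $[\psi]\in(\ker\del)^\vee/\la^\vee(\ker\del)$ is intrinsic to $(M,\la,\del)$. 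I would then prove that the pair $((\ker\del,\la|_{\ker\del}),[\psi])$, taken modulo the natural action of $\Aut_\F(\ker\del,\la|_{\ker\del})$, is a complete isomorphism invariant of $(M,\la,\del)\in\Fd$ with $\del$ surjective.

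Next I would explicitly compute the invariants of $(M,\la,\del)\#X$ in terms of those of $(M,\la,\del)$. Using $u=(a,-1)$ and $v=(0,1)$ in $M\oplus R$, one checks $\la\#(u,v)=1$, so $\langle u,v\rangle\cong\HH$; its $\la\#$-orthogonal complement consists of the elements $(m,-\la(a,m))$ with $m\in\ker\del$ and is isometric to $(\ker\del,\la|_{\ker\del})$. Hence the underlying formed space of $(M,\la,\del)\#X$ is $\HH\oplus(\ker\del,\la|_{\ker\del})$. A parallel computation in $\ker(\del+\id_R)$ identifies $(\ker(\del\#),\la\#|)$ isometrically with $(M,\la)$. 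Thus summing with $X$ canonically records both $(M,\la)$ and $(\ker\del,\la|_{\ker\del})$.

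For the cancellation itself, an isomorphism $(M_1,\la_1,\del_1)\#X\cong (M_2,\la_2,\del_2)\#X$ yields simultaneous isometries
\[\HH\oplus(\ker\del_1,\la_1|_{\ker\del_1})\cong \HH\oplus(\ker\del_2,\la_2|_{\ker\del_2})\qquad\text{and}\qquad (M_1,\la_1)\cong(M_2,\la_2).\]
Cancelling the $\HH$-summand in $\F$ via the uniqueness of the canonical form in Lemma~\ref{lem: pid}(ii)--(iii) yields $(\ker\del_1,\la_1|_{\ker\del_1})\cong(\ker\del_2,\la_2|_{\ker\del_2})$. Combined with $(M_1,\la_1)\cong(M_2,\la_2)$, which constrains $[\psi_i]$ through the extension presentation of $(M,\la)$ over $(\ker\del,\la|_{\ker\del})$, the classification of Step~1 then forces $(M_1,\la_1,\del_1)\cong(M_2,\la_2,\del_2)$.

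The hardest step is proving the classification, and specifically tracking $[\psi]$ through $\#X$: one must verify that an abstract $\#X$-isomorphism can be modified so as to respect the decomposition of each side as $\ker\del_i\oplus\langle a_i\rangle\oplus\langle X\text{-summand}\rangle$, or equivalently, that matching the two invariants $(M_i,\la_i)$ and $(\ker\del_i,\la_i|_{\ker\del_i})$ is enough to pin down $[\psi_i]$ modulo the ambient automorphism group. This is where the PID hypothesis is indispensable, via the canonical forms and cancellation properties of Lemma~\ref{lem: pid}; over general rings the invariants do not collapse so cleanly, which is why Theorem~\ref{thm: general cancellation} requires the arc genus bound $g_X\ge 2\,usr(R)+5$.
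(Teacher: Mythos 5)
Your proposal takes a genuinely different route: where the paper first reduces $\#X$--cancellation to $\#\HH$--cancellation via $X^{\#3}\cong X\#\HH$ and then tracks the numerical invariant $\DD(M,\la,\del)$, you instead propose to classify objects of $\Fd$ by the pair $\big((\ker\del,\la|_{\ker\del}),[\psi]\big)$ modulo $\Aut_\F(\ker\del,\la|_{\ker\del})$, where $[\psi]$ is the class of $\la(a,-)|_{\ker\del}$ in $(\ker\del)^\vee/\la^\vee(\ker\del)$. That is indeed a complete invariant (the argument you give for modifying the arc $a$ by $n\in\ker\del$ works, and one can build an explicit isomorphism $\Theta=\theta\oplus(a_1\mapsto a_2)$ once representatives are matched), and your computation that the underlying form of $(M,\la,\del)\#X$ is $\HH\oplus(\ker\del,\la|_{\ker\del})$ while $(\ker\del\#,\la\#|)\cong(M,\la)$ is correct. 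So the approach is coherent up to that point.

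The gap is in the final deduction, and it is exactly where you flag uncertainty. What the $\#X$--isomorphism gives you, after applying your own classification to $(M_i,\la_i,\del_i)\#X$, is an isometry $\theta_M\colon(M_1,\la_1)\to(M_2,\la_2)$ with $\theta_M^*[\del_2]=[\del_1]$ in $M_1^\vee/\la_1^\vee(M_1)$; the cancellation of $\HH$ separately gives some (unrelated) isometry $(\ker\del_1,\la_1|)\cong(\ker\del_2,\la_2|)$. But your classification in Step~1 requires a \emph{single} isometry of kernels that also matches $[\psi_1]$ with $[\psi_2]$, and the two isometries you produce are not compatible a priori: $\theta_M$ has no reason to carry $\ker\del_1$ to $\ker\del_2$, and the independent kernel isometry has no reason to respect $[\psi]$. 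After reducing to $M_1=M_2=M$, $\la_1=\la_2=\la$, $\theta_M=\id$, what you still need is the statement: \emph{if $\del_1,\del_2\colon M\to R$ are both surjective and $\del_2-\del_1\in\la^\vee(M)$, then there is $\sigma\in\Aut_\F(M,\la)$ with $\del_1\circ\sigma=\del_2$.} This is true over a PID (it can be checked that the invariants $\del(M_i)$ of the paper's Definition~B.2 are unchanged under $\del\mapsto\del+\la(n,-)$ as long as $\del$ stays surjective, using $d_iM\subset M_i$ and $\del(M)=R$), but it is not implied by any ``extension presentation'' of $(M,\la)$ over $(\ker\del,\la|)$, since there can be several $\Aut_\F(\ker\del,\la|)$-orbits of $[\psi]$ producing the same isomorphism type of $(M,\la)$. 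Proving this transitivity statement is essentially the content of the paper's normal-form Theorem~\ref{thm: classification}, so your Step~1 as stated does not make it available, and the gap would have to be filled with a comparable Smith-normal-form $+$ transvection argument.
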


Note that the assumption that both $\del_i$ are surjective, and hence both $(M_i,\la_i,\del_i)$ have positive arc genus, is needed. Indeed, 
Proposition \ref{prop:X^n} gives a decomposition $X^{\# 2g+1} \cong (\HH^g,0) \# X$ but $X^{\#2g} \ncong (\HH^g,0)$. 

\medskip
 
The classification result used in the proof of the above theorem will be given in terms of the following invariant:   

\begin{Def}
    Let $R$ be a PID and let $(M,\la,\del)$ be a formed space with boundary with $M\neq 0$. Its {\em form data} is the tuple 
    $$\DD(M,\la,\del)=(n,l,d_1,\dots,d_k,\delta_1,\dots,\delta_{k+1})$$ 
    where
    \begin{enumerate}[(i)]
        \item $n=\rk(M)$ is the rank of $M$; 
        \item $(l,d_1,\dots,d_k)$ is the Smith normal form of $\la$, representing the form $$diag(d_1,d_1,\dots,d_k,d_k,0,\dots,0)$$ with $l=n-2k \geq 0$ zeroes; 
        \item for $1\le i\le k$, 
        $(\delta_i)=\del(M_i)$, where $M_i=\{m \in M: \forall m' \in M \; d_{i} | \la(m,m')\}$.
        \item 
       $(\delta_{k+1})=\del(\operatorname{Rad}(M,\la))$, where $\operatorname{Rad}(M,\la)=\{m \in M: \forall m' \in M \; \la(m,m')=0\}$; 
    \end{enumerate}
\end{Def}

As already used in Lemma~\ref{lem: pid}, the fact that the Smith normal form of $\la$ has the above form follows from \cite[Theorem IV.I]{Newman}, where $d_1|d_2|\dots|d_k$ are uniquely determined up to scaling them by units in $R$.  
The above form data adds to the Smith normal form the data of the ideals of $R$ given by applying $\del$ to the subspaces $M_i$ determined by $\la$.





The $\delta_i$'s are likewise well-defined up to scaling by a unit, and 
we have $\delta_{i-1}|\delta_i| \frac{d_{i}}{d_{{i-1}}} \delta_{i-1}$ for $i \geq 2$ because $\frac{d_{i}}{d_{{i-1}}} M_{i-1} \subset M_i \subset M_{i-1}$. Setting $M_{k+1}=\Rad(M,\la)$, we see that the relation still hold for $i=k+1$.  


We say that two tuples $(n,l,d_1,\dots,d_k,\delta_1,\dots,\delta_{k+1})$ and $(n',l',d_1',\dots,d_k',\delta_1',\dots,\delta_{k+1}')$ are {\em equivalent} if $n=n'$, $l=l'$ and there are units $u_i,v_i$ such that $d_i=u_id_i'$ and $\delta_i=v_i\delta_i'$. 
The following result gives a classification of all the objects in $\Fd$: 

\begin{thm}[Classification of formed spaces with boundary over a PID] \label{thm: classification}
If $R$ is a PID then the isomorphism class of a formed space $(M,\la,\del) \in \Fd$ with $M\neq 0$ is determined by its equivalence class of form data $\DD(M,\la,\del)$. 

Moreover, any form data $(n,l,d_1,\dots,d_k,\delta_1,\dots,\delta_{k+1})$ satisfying the relations 
$d_1|d_2|\dots|d_k$ and $\delta_{i-1}|\delta_i| \frac{d_{i}}{d_{{i-1}}} \delta_{i-1}$ for $2\leq i \leq k+1$
is realized. 
\end{thm}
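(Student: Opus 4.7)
The plan is to prove both statements simultaneously by exhibiting a normal form within each equivalence class of form data. By Lemma~\ref{lem: pid}(ii), I first reduce to the case where $\la$ is in Smith normal form with respect to a basis $e_1, f_1, \ldots, e_k, f_k, g_1, \ldots, g_l$, with $\la(e_i, f_i) = d_i$, all other basis pairings vanishing, and $\langle g_1, \ldots, g_l \rangle = \Rad(M, \la)$. In this basis, one computes that $M_i$ is generated by $(d_i/d_j)e_j$ and $(d_i/d_j)f_j$ for $j < i$, by $e_j, f_j$ for $j \geq i$, and by all $g_s$, so the ideals $\delta_i = \del(M_i)$ become explicit gcds of the corresponding evaluations of $\del$. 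The problem of classifying $(M, \la, \del)$ up to isomorphism in $\Fd$ thus reduces to classifying $\del \in M^\vee$ under the pre-composition action of $\Aut_\F(M, \la)$.

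For uniqueness I successively normalize $\del$ by automorphisms of $(M, \la)$. I first use the subgroup $GL_l(R) \hookrightarrow \Aut_\F(M, \la)$ acting only on the radical summand to bring $\del|_{\Rad}$ into the form $\del(g_1) = \delta_{k+1}$ and $\del(g_s) = 0$ for $s \geq 2$, via Smith normal form for linear functionals on a free module. Next I induct on $i$ from $k$ down to $1$, at each step using three types of form-preserving moves to normalize the pair $(\del(e_i), \del(f_i))$: the $SL_2(R)$-action on $\langle e_i, f_i\rangle$ preserving $d_i\HH$ (which acts on $(\del(e_i), \del(f_i))$ by the usual $SL_2$-action, reducing them to their gcd by the Euclidean algorithm); radial mixing automorphisms such as $e_i \mapsto e_i + \mu g_s$, which shift $\del(e_i)$ by $\mu\del(g_s) \in (\delta_{k+1})$; and mixing automorphisms between $\langle e_i, f_i\rangle$ and $\langle e_j, f_j\rangle$ for $j < i$, whose allowable coefficients are constrained by the divisibility $d_j | d_i$ and shift $\del(e_i), \del(f_i)$ by elements of $((d_i/d_j)\delta_j)$. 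Combining these moves reduces $(\del(e_i), \del(f_i))$ to a unique canonical representative $(\delta_i', 0)$ readable off from the form data.

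For realization, given any allowable tuple, I explicitly define $\del$ on the canonical $(M, \la)$ by $\del(e_i) = \delta_i$, $\del(f_i) = 0$, $\del(g_1) = \delta_{k+1}$, $\del(g_s) = 0$ for $s \geq 2$, and verify directly that the associated form data matches; the divisibility relations $\delta_{i-1} | \delta_i | (d_i/d_{i-1})\delta_{i-1}$ are exactly what ensures that each $\gcd$ appearing in the computation of $\delta_i$ indeed equals the prescribed ideal. The main obstacle will be the uniqueness induction: I must verify that the three classes of automorphisms above, applied in the correct order (largest $d_i$ first), suffice to reach the canonical form without disturbing earlier normalizations. This requires carefully exploiting both the divisibility chain $d_1 | d_2 | \cdots | d_k$, which controls which mixing moves preserve $\la$, and the compatibility of the $\delta_i$'s, which guarantees that the available gcd reductions in $R$ are precisely enough to match the prescribed invariants.
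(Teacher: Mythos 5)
Your proposal is essentially the same strategy as the paper's: reduce $\la$ to Smith normal form by Lemma~\ref{lem: pid}(ii), normalize $\del$ on the radical first via $GL_l(R)$, then run a decreasing induction on the hyperbolic summands using exactly the three families of form-preserving moves you list (the $SL_2(R)$ reduction on each $d_i\HH$, the radical mixes $e_i \mapsto e_i + \mu g_s$ or $f_i \mapsto f_i + \mu g_s$, and the inter-hyperbolic mix constrained by $d_j \mid d_i$ which shifts by $(d_i/d_j)\delta_j$), and finally realize any admissible tuple by writing down the obvious $\del$ on the canonical $(M,\la)$. The paper packages the inter-hyperbolic mix as an explicit automorphism $\zeta$ (its Step~4), then in Step~5 verifies, using the divisibility chain, that successive gcd reductions only shrink $\del e_m$ and hence never destroy congruences already achieved — precisely the delicacy you flag at the end. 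So the gap you identify as the ``main obstacle'' is indeed where the paper spends most of its effort, but your plan correctly identifies all the needed moves and the correct induction order; filling it in would produce essentially the published argument.
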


\begin{proof}
Suppose that $(n,l,d_1,\dots,d_k,\delta_1,\dots,\delta_{k+1})$ is a tuple satisfying the above relations. We start by constructing a \textit{standard formed space with boundary} realizing it: 
Take $M=R^n$. Writing the standard basis of $R^n$ as $e_1,f_1,\dots,e_k,f_k,g_1,\dots,g_l$,  
define $\la$ via $\la(e_i,f_i)=d_i$ for $1 \leq i \leq k$ and $\la$ vanishes otherwise between standard basis elements, and define 
$\del$ via $\del e_{i}=\delta_i$, $\del f_i=0$ for $1 \leq i \leq k$ and $\del g_j=\delta_{k+1}$ for $1 \leq j \leq l$. 

One verifies that this form realises the given sequence of invariants using that $M_i$ has basis 
\[\Big\{\frac{d_{i}}{d_1}e_1,\frac{d_{i}}{d_1}f_1,\frac{d_{i}}{d_2}e_2,\frac{d_{i}}{d_2}f_2,\dots,e_{i},f_{i},e_{i+1},f_{i+1},\dots,e_k,f_k,g_1,\dots,g_l\Big\},\]
$\operatorname{Rad}(M,\la)$ has basis
$\{g_1,\dots,g_l\},$
and that
$\delta_i| \frac{d_{i}}{d_{j}} \delta_j$ for $1 \leq j \leq i$ and $\delta_i|\delta_t$ for $t \geq i$.

\medskip

Given $(M,\la,\del) \in \Fd$ with $M\neq 0$,  
we will now construct an isomorphism to the standard formed space associated to its form data. 

\smallskip

By Lemma \ref{lem: pid}(ii), the formed space $(M,\la)$ is isomorphic to the canonical formed space 
\[\bigoplus_{i=1}^{k} d_i\!\HH \oplus (R^l,0)\]
determined by the first part of its invariants. 
Picking such an isomorphism allows us to assume that $M=R^n$ with standard basis $e_1,f_1,\dots,e_k,f_k,g_1,\dots,g_l$ such that $\la$ agrees with the above canonical form in this basis.
It remains to show that we can apply an automorphism of $(M,\la)$ to ensure that also $\del$ is canonical. 
We will proceed in several steps, each of which preserves $\la$ but modifies $\del$ to make it closer to the standard one.  

\smallskip

\noindent
\textbf{Step 1}: This step will make $\del$ standard on $\Rad(M,\la)=0 \oplus R^l$. \\
If $l=0$ there is nothing to do, so suppose $l \geq 1$. We need to find an element in $\GL(0 \oplus R^l) \subset \Aut(M,\la)$ so that $\del(g_j)=\delta_{k+1}$ for $1 \leq j \leq l$. 
This is immediate if $\delta_{k+1}=0$ since in that case $\del$ is the zero map on the radical. If not, $\delta_{k+1}$ generates the ideal $\del(\Rad(M,\la))$ and $\Rad(M,\la)=0 \oplus R^l$, so there is a non-zero vector $v \in 0 \oplus R^l$ such that $\del v=\delta_{k+1}$. 
Using Lemma~\ref{lem: pid}(ii), we can write $v= r v'$ with $r \in R$ and $v'$ unimodular. We must have $v'\in \Rad(M,\la)$ since $r\neq 0$. 
Hence $\del v' \in (\delta_{k+1})$, giving that $r$ is a unit so $v$ is unimodular. 
Now $\GL(0 \oplus R^l)$ acts transitively on the set of unimodular vectors by Lemma \ref{lem: pid}(i), so we can apply an automorphism to ensure that $\del g_1=\delta_{k+1}$. 
Finally, apply the automorphism given by $g_i \mapsto g_i +(1-\frac{\del g_i}{\delta_{k+1}}) g_1$ for each $1<i\le l$ to ensure that $\del$ is standard on $0 \oplus R^l$. 

\smallskip

In the remaining steps we will not modify $g_1,\dots,g_l$; we will only modify the remaining basis elements.

\smallskip

\noindent
\textbf{Step 2}: This step makes $\del f_i=0$ using an automorphism of $d_i\HH$ for each $1 \le i \le k$. \\
By Proposition \ref{prop:properties hyperbolics}(iii) with $l=\del$, there is an element $\phi\in \Aut(\langle e_i,f_i,\rangle,\la|_{\langle e_i,f_i,\rangle}) = \Aut(d_i\!\HH) \cong \Aut(\HH)$ that takes $(e_i,f_i)$ to a new basis $(\phi(e_i),\phi(f_i))$ with $\del \phi(f_i)=0$. 

\smallskip

\noindent
\textbf{Step 3}: This step makes $\del e_i| \delta_{k+1}$ for $1 \le i \le k$ using an automorphism of $d_i\!\HH\oplus (R^l,0)$ that preserves the condition  $\del f_i=0$ of the previous step and fixes $R^l$. \\
If $l=0$ there is nothing to do, so suppose $l \geq 1$. We will proceed in two steps. Firstly apply the automorphism $\psi\in \Aut(d_i\!\HH\oplus (R^l,0))$ defined by $\psi(e_i)=e_i$ and $\psi(f_i)=f_i+g_1$, so that the new basis has the property that $\del \psi(e_i)=\del e_i$ and $\del \psi(f_i)= \delta_{k+1}$. 
Then we repeat step 2 to get new bases $\Tilde{e_i}, \Tilde{f_i}$ such that $\del \Tilde{e_i}$ generates the ideal $(\del e_i, \delta_{k+1})$, and in particular $\del \Tilde{e_i} | \delta_{k+1}$, and $\del \Tilde{f_i}=0$. 

\smallskip
To finish the proof we need to further change the hyperbolic bases $(e_i, f_i)$ so that $\del e_i=\delta_i$ (keeping $\del f_i=0$). We will do so by decreasing induction on $i$ in Step 5 repeatedly using the following automorphism: 


\smallskip

\noindent
\textbf{Step 4}: Construction of an automorphism of $d_i\!\HH\oplus d_j\!\HH$ for $i<j$ ensuring that $\del e_i|\del e_j|\frac{d_j}{d_i}\del e_i$, without affecting the above properties. \\
Fix $i<j$ and suppose that $\del f_i=0=\del f_j$. 
We first apply the automorphism $\zeta\in \Aut(d_i\!\HH\oplus d_j\!\HH)$ defined by 
\begin{align*}
    \zeta(e_i)=e_i, \ \
    \zeta(f_i)=f_i+e_j, \ \
    \zeta(e_j)=e_j, \ \
  \zeta(f_j)=f_j+\frac{d_j}{d_i}e_i,
\end{align*}
which gives a new basis with the property that now  
$\del \zeta(f_i)=\del e_j$  
and $\del \zeta(f_j)=\frac{d_j}{d_i} \del e_i$.
Applying step 2 to the $i$th and $j$th summand then gives a new basis $(e_i',f_i',e_j',f_j')$ such that $\del e_i'$ generates the ideal $(\del e_i,\del e_j)$ and $\del e_j'$ generates the ideal $(\del e_j, \frac{d_j}{d_i} \del e_i)$, while $\del f_i'=0$, $\del f_j'=0$.  This gives the desired congruences since $\del e_i'$ then divides both $\del e_j$ and $\frac{d_j}{d_i}\del e_i$, so that $\del e_i'|\del e_j'$ and $\del e_j'$ divides $\frac{d_j}{d_i}\del e_i$ and $\frac{d_j}{d_i}\del e_j$, and hence also $\frac{d_j}{d_i}\del e_i'$.  

Observe that this special transformation changes the spans of both $e_i, f_i$ and $e_j, f_j$ but, as it is an automorphism of $(M,\la)$, the submodules $M_1, \dots, M_k, \Rad(M,\la)$ are unchanged. 

\smallskip

\noindent
\textbf{Step 5}: This step inductively uses step 4 to ensure that $\del e_m=\delta_m$ for each $m=1,\dots,k$. \\
Assume that for a fixed $1 \le m \le k$ we already have that $\del e_j= \delta_j$ for $m+1 \le j \le k$. Then we will modify $e_m, f_m$ so that $\del e_m= \delta_m$ (and $\del f_m=0$).
 Here the start of the induction, the case $m=k$, where the assumption holds trivially. 

We make the induction step in two parts. 

\begin{enumerate}[(i)]
    \item This step applies an automorphism of $d_m\!\HH\oplus d_{m+1}\!\HH$ to ensure that $\del e_m|\delta_{m+1}|\dots|\delta_{k+1}$. \\
   If $m=k$, this holds by step 3 that already ensured that $\del e_k | \delta_{k+1}$, so there is nothing to do. 
If $m<k$, apply the step 4 transformation with $i=m<m+1=j$. 
    This changes $(e_m,f_m,e_{m+1},f_{m+1}) \mapsto (e_m',f_m',e_{m+1}',f_{m+1}')$. 
    We firstly claim that we sill have $\del e_{m+1}'= \delta_{m+1}$, up to a unit: this is because $\del e_{m+1}'$ generates the ideal $(\del e_{m+1}, \frac{d_{m+1}}{d_m} \del e_m)$ and $\del e_{m+1}=\delta_{m+1}$ by induction hypothesis and $\frac{d_{m+1}}{d_m} e_m \in M_{m+1}$ by definition so $\delta_{m+1} | \frac{d_{m+1}}{d_m} \del e_m$.
    Secondly, $\del e_m'$ generates the ideal $(\del e_m, \delta_{m+1})$ so in particular $\del e_m'| \delta_{m+1}$. Since $\delta_{m+1}| \dots | \delta_{k+1}$ by construction then we are done. 

    \item This step applies an automorphism of $d_1\!\HH\oplus\dots\oplus d_m\!\HH$ to ensure that 
    $\del e_m|\frac{d_m}{d_i} \del e_i$ for each $1 \le i <m$. \\
Apply Step 4 inductively for each pair $(i,j)$ with $1\le i<m=j$.  This replaces $(e_i,f_i,e_m,f_m)$ with $(e_i',f_i',e_m',f_m')$ satisfying in particular that $\del e_m'|\frac{d_m}{d_i} \del e_i'$. Note that the modified $\del e_m'$ generates the ideal $(\del e_m,\frac{d_m}{d_i}\del e_i)$. In particular it divides $\del e_m$ and so the congruences obtained in part (i) still hold, and likewise for the congruences $\del e_m|\frac{d_m}{d_i'} \del e_{i'}$ for $i'<i$, for the same reason.   
   
\end{enumerate}
We are left to check that the above procedure does indeed force the equality $\del e_m= \delta_m$, finishing the induction step.  
By definition $\delta_m$ generates the ideal $\del(M_m)$ and $M_m$ has a basis given by 
\[\Big\{\frac{d_{m}}{d_1}e_1,\frac{d_{m}}{d_1}f_1,\frac{d_{m}}{d_2}e_2,\frac{d_{m}}{d_2}f_2,\dots,e_{m},f_{m},e_{m+1},f_{m+1},\dots,e_k,f_k,g_1,\dots,g_l\Big\}.\]
By the above steps we have that $\del e_m | \frac{d_m}{d_i} \del e_i$ for $1 \le i \le m-1$ and $\del e_m | \del e_{m+1} | \cdots | \del g_1 = \dots = \del g_l$, giving (up to a unit) the equality $\del e_m= \delta_m$. This finishes the proof.

\end{proof}

\begin{proof}[Proof of Theorem~\ref{thm: cancellation}]
By the surjectivity assumption on $\del_i$,  
we have $(M_i,\la_i,\del_i) \cong (M'_i,\la'_i,\del'_i) \# X$. 
Adding $\# X^{\#2}$ to each side and using that $X^{\#3} \cong X \# \HH$ (Proposition \ref{prop:X^n}), we get isomorphisms
$$(M_1,\la_1,\del_1) \# \HH\cong (M_1,\la_1,\del_1) \# X^{\#2} \cong (M_2,\la_2,\del_2) \# X^{\#2} \cong (M_2,\la_2,\del_2) \# \HH.$$ 
 
Thus, it suffices to study of cancellation by $\HH$ instead. 
Now, the effect of $-\# \HH$ on the form data invariants is as follows: 
let 
$$\DD(M,\la,\del)=(n,l,d_1,\dots,d_k,\delta_1,\delta_2,\dots,\delta_{k+1})$$
be the original form data. Then, $$\DD((M,\la,\del)\#\HH)=(n+2,l,d'_1, d_2',\dots,d_{k+1}',\delta_1',\delta_2',\dots,\delta_{k+2}')$$
where $(d'_1,\delta'_1)=(1,\delta_1)$ and  $(d_i',\delta'_i)=(d_{i-1},\delta_{i-1})$ for $2 \leq i \leq k+1$, because $\del$ vanishes on $\HH$. 
In particular, the form data invariants of $(M,\la,\del)$ are determined by those of $(M,\la,\del) \# \HH$, which implies cancellation.   
\end{proof}




\bibliographystyle{plain}
\bibliography{biblio}

\end{document}